\setlist[enumerate,1]{%
	label={\normalfont(\arabic*)},ref=\arabic*%
}
\def\setmid{\mathrel{}\middle\vert\mathrel{}}%
\def\card#1{\lvert{#1}\rvert}
\def\transp#1{{}^{\mathrm{t}}#1}
\def\dtriangle{\rotatebox[origin=c]{180}{$\triangle$}}
\def\cm{\textsc{m}}
\DeclareMathOperator{\GC}{GC}
\DeclareMathOperator{\sgn}{sgn}
\DeclareMathOperator{\id}{id}
\theoremstyle{definition}
\newtheorem{Def}{Definition}[section]
\newtheorem{Exs}[Def]{Examples}
\theoremstyle{plain}
\newtheorem{Thm}[Def]{Theorem}
\newtheorem{Lem}[Def]{Lemma}
\newtheorem{Prop}[Def]{Proposition}
\theoremstyle{remark}
\newtheorem{Rem}[Def]{Remark}
\newtheorem{Rems}[Def]{Remarks}
\author{Toshiaki Omori, Hisashi Naito, and Tatsuya Tate}
\address{%
T.~Omori:
Cifra Co., Ltd., Tokyo 135-0047, Japan
}
\email{tosiaki.omori@gmail.com}
\address{%
H.~Naito:
Graduate School of Mathematics, 
Nagoya University, 
Chikusa, Nagoya 464-8602, Japan
}
\email{naito@math.nagoya-u.ac.jp}
\address{%
T.~Tate: 
Mathematical Institute,
Tohoku University, 
Aoba, Sendai 980-8578, Japan
}
\email{tatsuya.tate.c6@tohoku.ac.jp}
\title{Eigenvalues of the Laplacian 
on the Goldberg-Coxeter constructions 
for $3$- and $4$-valent graphs}
\keywords{Plane graphs, Goldberg-Coxeter constructions, 
combinatorial Laplacian, spectra}
\subjclass[2010]{Primary: 05C10; Secondary: 52B05} 
\begin{document}
\begin{abstract}
We are concerned with spectral problems 
of the Goldberg-Coxeter construction 
for $3$- and $4$-valent finite graphs. 
The Goldberg-Coxeter constructions 
$\GC_{k,l}(X)$ of a finite $3$- or $4$-valent graph $X$ 
are considered as ``subdivisions'' of $X$, 
whose number of vertices are increasing 
at order $O(k^2+l^2)$, 
nevertheless which have bounded girth. 
It is shown that the first (resp.\ the last) 
$o(k^2)$ eigenvalues of the combinatorial Laplacian 
on $\GC_{k,0}(X)$ tend to $0$ (resp.\ tend to $6$ or $8$ 
in the $3$- or $4$-valent case, respectively) 
as $k$ goes to infinity. 
A concrete estimate for the first several eigenvalues 
of $\GC_{k,l}(X)$ by those of $X$ is also obtained 
for general $k$ and $l$. 
It is also shown that the specific values 
always appear as eigenvalues of $\GC_{2k,0}(X)$ 
with large multiplicities 
almost independently to the structure of the initial $X$. 
In contrast, 
some dependency of the graph structure of $X$ 
on the multiplicity of the specific values 
is also studied. 
\end{abstract}
\maketitle
\thispagestyle{empty}
%
\section{Introduction}
\label{Sec(Intro)}
The \emph{Goldberg-Coxeter construction} 
is a subdivision of a $3$- or $4$-valent graph, 
and it is defined by Dutour-Deza~\cite{MR2429120} 
for a plane graph based on a simplicial subdivision 
of regular polytopes in \cite{MR0448238,1937104}. 
In \cite{MR2429120}, it is pointed out that 
it often appears in chemistry and architecture, 
and its combinatorial and algebraic structures 
are investigated. 
Goldberg-Coxeter constructions of regular polyhedra 
generate a class of Archimedean polyhedra, 
and infinite sequence of polyhedra, 
which are called Goldberg polyhedra. 
For example a Goldberg-Coxeter construction 
of a dodecahedron generates a truncated-icosahedron, 
which is known as a fullerene $C_{60}$ 
\cite{ISI:000254695000004,ISI:000332180900020}. 
Goldberg-Coxeter constructions are also applied 
to Mackay-like crystals, 
and explain large scale of spatial fullerenes 
\cite{MR3724208,MR3557605}. 
Mathematical modeling of self-assembly in nature 
is also widely studied in 
\cite{MR0448238,ISI:000342904500001}. 
Recently, Fujita et al.\ have synthesized 
molecule structures with $4$-valent Goldberg polyhedra, 
and they explain self-assembly from viewpoints of 
chemistry and biology \cite{ISI:000391190500047}. 
\par
On the other hand, 
the stability of a molecule is explained 
by eigenvalues of the finite graphs 
which express the molecule structure 
by H\"uckel method \cite{MR2571608}. 
Hence, studies for eigenvalues 
of Goldberg-Coxeter constructions are worth trying. 
DeVos, Goddyn, Mohar, and \v{S}\'{a}mal considered 
eigenvalues of $(3,6)$-fullerenes (cf.\mbox{\cite{MR2482954}}). 
A $(3,6)$-fullerene $X$ is a $3$-valent plane graph 
whose faces are triangles and hexagons. 
We consider general $3$- or $4$-valent graphs, 
and if a graph is $(k,6)$-fullerene, 
then its Goldberg-Coxeter constructions 
are also $(k,6)$-fullerenes. 
The Goldberg-Coxeter construction $\GC_{k,l}(X)$ 
of a $3$- or $4$-valent graph $X$ 
has the parameters $k$ and $l$ both of which are integers 
and they are regarded to indicate a point 
in the triangular or square lattices, respectively. 
Then we are concerned with behavior 
of eigenvalues of $\GC_{k,l}(X)$ 
when $k$ and $l$ tend to infinity. 
\par
Throughout this paper, unless otherwise indicated, 
a graph is always assumed to be connected, 
finite and simple. 
For a graph $X$, let us denote 
by $V(X)$ the set of vertices of $X$, 
and by $E(X)$ the set of undirected edges of $X$. 
For $p\in V(X)$, the set of its neighboring vertices 
is denoted by $N_X(p)$. 
The combinatorial Laplacian $\Delta_X$, 
simply called the \emph{Laplacian}, 
of a graph $X$ acts on the set $\mathbb{C}^{V(X)}$ 
of functions on $V(X)$ and is defined as
\[
	(\Delta_Xf)(p)
	:=
	\deg(p)f(p)
	-\sum_{q\in N_X(p)}f(q)
	\quad
	\text{for $f\in \mathbb{C}^{V(X)}$ and $p\in V(X)$},
\]
where $\deg(p)$ denotes the degree of the vertex $p$.
As is well-known, the eigenvalues of $\Delta_X$ 
for a regular graph $X$ of degree $r$ 
necessarily lie in the interval $[0,2r]$. 
\par
The definition of the Goldberg-Coxeter constructions 
extends for general $3$- or $4$-valent graph 
$X = (V (X), E(X))$ equipped with 
an orientation at each vertex, 
in the sense that, for each $p\in V(X)$, 
the set of edges emanating from $p$ is ordered. 
As shall be explained later 
(cf.\ Proposition~\ref{Prop(embedded_on_surface)}), 
if, in particular, $X$ is ``appropriately'' embedded 
in an oriented surface, then 
$X$ is endowed with a natural orientation at each vertex 
and $\GC_{k,l}(X)$ remains to be also embedded 
in the same surface. 
\par
There is a long line of works on upper bounds 
for the (especially, first nonzero) eigenvalues 
of general planar or genus $g$ finite graphs 
(see \mbox{\cite{MR2203731,MR2294342}} 
and the references therein). 
In \mbox{\cite{MR2846385}}, it is proved that 
the $i$-th eigenvalue of a graph embedded in 
an oriented surface of genus $g$ is estimated from above by 
$O((g+1)\log^2(g+1)i/n)$, 
where $n$ is the number of the vertices. 
The following theorem 
does not only depend on the genus, 
but contains an assertion 
on the last several eigenvalues of $\GC_{k,0}(X)$. 
\begin{Thm}
\label{Thm(o(k^2)_evs)}
Let $X=(V(X),E(X))$ be a connected, finite and simple 
$3$- or $4$-valent graph 
equipped with an orientation at each vertex, and 
$\GC_{k,0}(X)$ be the Goldberg-Coxeter construction of $X$ 
for $k\geq 1$. 
Then, for any number $o(k^2)$ satisfying $o(k^2)/k^2\to 0$ 
as $k\to \infty$, 
the first {\upshape(}resp.\ the last{\upshape)} 
$o(k^2)$ eigenvalues of the Laplacian of\/ $\GC_{k,0}(X)$ 
tend to $0$ {\upshape(}resp.\ tend to $6$ or $8$ 
in the $3$- or $4$-valent case, respectively{\upshape)} 
as $k$ goes to infinity. 
\end{Thm}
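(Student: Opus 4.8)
The plan is to obtain both halves of the statement from the min--max principle, using explicit, pairwise disjointly supported test functions adapted to the ``patchwork'' structure of $\GC_{k,0}(X)$. First I would unwind the construction (cf.\ Proposition~\ref{Prop(embedded_on_surface)}): passing to the dual, $X^{*}$ is a triangulation (in the $3$-valent case) or a quadrangulation (in the $4$-valent case) of the underlying surface, and $\GC_{k,0}(X)$ is the dual of the subdivision of $X^{*}$ obtained by cutting each face of $X^{*}$ into the standard side-$k$ triangular grid, resp.\ the $k\times k$ square grid. Consequently $V(\GC_{k,0}(X))=\bigsqcup_{F}P_{F}$ is a disjoint union of $\card{V(X)}$ \emph{patches}, one per face $F$ of $X^{*}$; each $P_{F}$ has exactly $k^{2}$ vertices, is a finite chunk of the hexagonal (honeycomb) lattice, resp.\ of $\mathbb{Z}^{2}$, of diameter $\Theta(k)$; and two patches $P_{F},P_{F'}$ are joined only across a ``seam'' of at most $rk$ edges of $\GC_{k,0}(X)$, where $r=3$ or $r=4$. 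The point that makes the top of the spectrum accessible is that each $P_{F}$, being a subgraph of a bipartite lattice, is bipartite; fix a proper $2$-colouring $\varepsilon_{F}\colon P_{F}\to\{\pm1\}$.

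For the bottom of the spectrum, fix one patch $P=P_{F_{0}}$ and a sequence $m=m(k)$ with $m/k^{2}\to0$. Overlaying on the triangular (resp.\ square) patch a coarse grid of mesh $\Theta(k/\sqrt{m})$ cuts $P$ into $M$ connected cells $D_{0},\dots,D_{M-1}$ with $m<M=O(m)$, each of size $\Theta(k^{2}/M)$, each meeting only $O(1)$ of the other cells, with $O(k/\sqrt{M})$ edges across each cell--interface and $O(k/\sqrt{M})$ seam edges incident to any single cell. Put $\phi_{i}=\mathbf{1}_{D_{i}}$, extended by $0$ off $P$. On the $M$-dimensional span $W$ of the $\phi_{i}$, every $f=\sum_{i}c_{i}\phi_{i}$ satisfies $\langle\Delta_{\GC_{k,0}(X)}f,f\rangle=\sum_{\{p,q\}\in E}\card{f(p)-f(q)}^{2}\le c_{1}(k/\sqrt{M})\sum_{i}\card{c_{i}}^{2}$, because edges lying inside one cell contribute $0$, each inter-cell or seam edge contributes at most $2(\card{c_{i}}^{2}+\card{c_{j}}^{2})$, and only $O(k/\sqrt{M})$ of them meet any given cell; meanwhile $\lVert f\rVert^{2}=\sum_{i}\card{c_{i}}^{2}\card{D_{i}}\ge c_{2}(k^{2}/M)\sum_{i}\card{c_{i}}^{2}$. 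Hence the Rayleigh quotient on $W$ is at most $c_{3}\sqrt{M}/k=O(\sqrt{m}/k)\to0$, so by min--max $\lambda_{m}(\GC_{k,0}(X))\le c_{3}\sqrt{m}/k\to0$; since $0=\lambda_{0}\le\dots\le\lambda_{m}$, the first $m(k)$ eigenvalues all tend to $0$.

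For the top of the spectrum, I would run the very same argument for the signless operator $2rI-\Delta_{\GC_{k,0}(X)}=rI+A$, whose quadratic form is $f\mapsto\sum_{\{p,q\}\in E}\card{f(p)+f(q)}^{2}$, now using the test functions $\varepsilon_{F_{0}}\mathbf{1}_{D_{i}}$. Since every edge of $\GC_{k,0}(X)$ that lies \emph{inside} $P$ joins two vertices of opposite $\varepsilon_{F_{0}}$-colour, such edges again contribute $0$, while the inter-cell and seam edges obey exactly the bound used above; so $\langle(2rI-\Delta_{\GC_{k,0}(X)})f,f\rangle\le c_{3}(\sqrt{m}/k)\lVert f\rVert^{2}$ on the new $M$-dimensional space. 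Therefore $2rI-\Delta_{\GC_{k,0}(X)}$ has at least $m+1$ eigenvalues $\le c_{3}\sqrt{m}/k$, i.e.\ $\Delta_{\GC_{k,0}(X)}$ has at least $m+1$ eigenvalues $\ge2r-c_{3}\sqrt{m}/k$; as $\GC_{k,0}(X)$ is $r$-regular, all of its Laplace eigenvalues are $\le2r$, so the last $m(k)$ eigenvalues all converge to $2r$, which is $6$ in the $3$-valent and $8$ in the $4$-valent case.

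The only genuinely delicate step is the first paragraph: pinning down $\GC_{k,0}(X)$ as honeycomb (resp.\ square) patches glued along $O(k)$-edge seams, together with the bipartiteness and $2$-colouring of each patch, and then the elementary but slightly fussy geometry of chopping such a patch into balanced, small-diameter cells near its corners and along its seams. Once this is established both halves are a single Rayleigh-quotient estimate; the factors $\card{V(X)}$, the precise seam counts, and the fact that corner cells have size only $\Theta(k^{2}/M)$ rather than exactly $k^{2}/M$ affect only the constants $c_{1},c_{2},c_{3}$ and not the conclusion.
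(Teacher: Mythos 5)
Your argument is correct, but it takes a genuinely different route from the paper. The paper first proves an interlacing comparison between the spectrum of $\GC_{k,0}(X)$ and that of a single $(k,0)$-cluster (Theorems~\ref{Thm(comp_with_3-vlnt_clstr)} and \ref{Thm(comp_with_4-vlnt_clstr)}), and then \emph{explicitly diagonalizes} the cluster: it builds eigenfunctions on a torus $T(3k)$ (resp.\ $S(2k)$), symmetrizes them under $D_6$ (resp.\ $D_4$), determines exactly when the symmetrizations vanish (Lemma~\ref{Lem(all_inv&alt_evs)}), folds them down to the cluster via the injection $\iota$ (Lemma~\ref{Lem(patapata)}), checks by counting that this exhausts the spectrum, and finally observes that the explicit eigenvalue function $(s,t)\mapsto\lambda_{s,t}^{\mp}$ is close to $0$ (resp.\ $6$ or $8$) only on an $o(k^2)$-sized neighbourhood of the relevant lattice points. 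You instead apply min--max directly on $\GC_{k,0}(X)$ with disjointly supported indicator test functions on cells of one honeycomb (resp.\ square-lattice) patch, and handle the top of the spectrum by the signless form $\sum_{\{p,q\}}\lvert f(p)+f(q)\rvert^2$ together with a proper $2$-colouring of that single patch --- which exists because each patch is a subgraph of a bipartite lattice, even when $\GC_{k,0}(X)$ itself is not bipartite. Your route is shorter, avoids the entire diagonalization machinery, and is robust (it would apply to any graph assembled from bounded-geometry lattice patches glued along $O(k)$-edge seams); its costs are a non-sharp rate ($O(\sqrt{m}/k)$ where the explicit computation gives $O(m/k^2)$ for the bottom) and none of the by-products the paper extracts from the exact cluster spectrum (the quantitative bounds \eqref{Eq(lambda(GC(X))>6-e)}, Theorem~\ref{Thm(arbitrary_lambda)}, and the multiplicity results for the eigenvalues $2$ and $4$). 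The one step you should not leave implicit is the cell decomposition: the lower bound $\lVert f\rVert^2\geq c_2(k^2/M)\sum_i\lvert c_i\rvert^2$ requires \emph{every} cell, including those truncated at the corners and along the seams of the triangular patch, to contain $\geq c\,k^2/M$ vertices, so undersized boundary cells must be merged into neighbours (which only changes constants and keeps the test space of dimension $>m$); you flag this, and it is routine, so it is not a gap.
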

Here we note that $\GC_{k,0}(X)$ has 
$k^2\card{V(X)}=O(k^2)$ vertices and 
the above result is the best in matters of the convergence 
to $0$ or to the natural upper bound. 
As for the first and the last $\card{V(X)}$ eigenvalues, 
the following concrete estimates are also obtained.
\begin{Thm}
\label{Thm(comp_with_X)}
Let $X=(V(X),E(X))$ be a connected, finite and simple 
$3$- or $4$-valent graph 
equipped with an orientation at each vertex, 
$X'=\GC_{k,l}(X)$ 
be the Goldberg-Coxeter construction of $X$, 
where $k\geq l\geq 0$ and $k\neq 0$ and
\begin{gather*}
	0=\lambda_1(X)<\lambda_2(X)\leq \cdots \leq 
	\lambda_{\card{V(X)}}(X),
	\\
	0=\lambda_1(X')<\lambda_2(X')\leq \cdots \leq 
	\lambda_{\card{V(X')}}(X')
\end{gather*}
be the eigenvalues of their Laplacians 
$\Delta_X$, $\Delta_{X'}$, respectively. 
Then for $i=1,2,\dots,\card{V(X)}$,
\begin{equation}
	\lambda_i(\GC_{k,l}(X))
	\leq
	\left\{
	\begin{aligned}
		&\frac{3k}{k^2+kl+l^2}\lambda_i(X),
		& & \text{if $X$ is $3$-valent},
		\\
		&\frac{2k}{k^2+l^2}\lambda_i(X),
		& & \text{if $X$ is $4$-valent}.
	\end{aligned}
	\right.
	\label{Eq(lambda(GC(X))<lambda(X))}
\end{equation}
If in particular $X$ is a bipartite $3$-valent graph, 
then for $i=1,2,\dots,\card{V(X)}$,
\begin{equation}
	\lambda_{\card{V(\GC_{k,l}(X))}-i+1}
	(\GC_{k,l}(X))
	\geq
	6-\frac{3k}{k^2+kl+l^2}\lambda_i(X).
	\label{Eq(lambda(GC(X))>6-lambda(X))}
\end{equation}
In the case that $l=0$, 
the last $\card{V(X)}$ eigenvalues 
of\/ $\GC_{k,0}(X)$ satisfy
\begin{equation}
	\lambda_{\card{V(\GC_{k,0}(X))}-i+1}
	(\GC_{k,0}(X))
	\geq
	\left\{
	\begin{aligned}
		& 3+\sqrt{5+4\cos\frac{2\pi}{k}},
		& & \text{if $X$ is $3$-valent},
		\\
		& 4+4\cos\frac{2\pi}{k},
		& & \text{if $X$ is $4$-valent and $k$ is even},
		\\
		& 4+4\cos\frac{\pi}{k},
		& & \text{if $X$ is $4$-valent and $k$ is odd},
	\end{aligned}
	\right.
	\label{Eq(lambda(GC(X))>6-e)}
\end{equation}
for $i=1,2,\dots,\card{V(X)}$. 
\end{Thm}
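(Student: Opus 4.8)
The plan is to prove all three displays by the Courant--Fischer min--max principle, transplanting well-chosen test functions from small graphs onto $X':=\GC_{k,l}(X)$, which is again $r$-valent with $r\in\{3,4\}$. Recall $\langle\Delta_{X'}g,g\rangle=\sum_{\{p,q\}\in E(X')}\card{g(p)-g(q)}^2$, that $2rI-\Delta_{X'}$ is the signless Laplacian $Q_{X'}$ with $\langle Q_{X'}g,g\rangle=\sum_{\{p,q\}\in E(X')}\card{g(p)+g(q)}^2$, and that $\lambda_{\card{V(X')}-i+1}(\Delta_{X'})=2r-\mu_i(Q_{X'})$, where $0\le\mu_1(Q_{X'})\le\mu_2(Q_{X'})\le\cdots$ are the eigenvalues of $Q_{X'}$; so \eqref{Eq(lambda(GC(X))>6-lambda(X))} and \eqref{Eq(lambda(GC(X))>6-e)} amount to upper bounds on $\mu_i(Q_{X'})$ for $i=1,\dots,\card{V(X)}$. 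The structural input is the picture given by the Goldberg--Coxeter recipe: $X'$ is assembled from $\card{V(X)}$ congruent \emph{cells}, one cell $C_v$ for each $v\in V(X)$ carrying $N:=k^2+kl+l^2$ (resp.\ $k^2+l^2$) vertices, the cells glued to one another along ``edge-regions'', one for each edge of $X$, by a fixed pattern using some number $c_{k,l}\le 3k$ (resp.\ $c_{k,l}\le 2k$) of inter-cell edges per edge of $X$. For \eqref{Eq(lambda(GC(X))<lambda(X))} I would take the transplantation $\Phi\colon\mathbb{C}^{V(X)}\to\mathbb{C}^{V(X')}$ sending $f$ to the function with constant value $f(v)$ on every vertex of $C_v$: it is injective, $\langle\Phi f,\Phi f\rangle=N\langle f,f\rangle$ since each cell has $N$ vertices, and only inter-cell edges contribute to the Dirichlet sum, so $\langle\Delta_{X'}\Phi f,\Phi f\rangle=\sum_{\{v,w\}\in E(X)}c_{k,l}\card{f(v)-f(w)}^2=c_{k,l}\langle\Delta_X f,f\rangle$; thus the Rayleigh quotient of $\Phi f$ equals $(c_{k,l}/N)$ times that of $f$, and feeding $\Phi$ of the span of the first $i$ eigenfunctions of $\Delta_X$ into min--max yields $\lambda_i(X')\le(c_{k,l}/N)\lambda_i(X)\le(3k/N)\lambda_i(X)$ (resp.\ $(2k/N)\lambda_i(X)$), which is \eqref{Eq(lambda(GC(X))<lambda(X))}.

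For \eqref{Eq(lambda(GC(X))>6-lambda(X))}, let $X$ be bipartite and $3$-valent with $2$-coloring $\varepsilon\colon V(X)\to\{\pm1\}$; bipartiteness makes $Q_X$ and $\Delta_X$ conjugate (by multiplication by $\varepsilon$), so $\mu_i(Q_X)=\lambda_i(X)$. I would transplant by a sign-twisted analogue $\Psi$ of $\Phi$: each cell $C_v$ is a patch of the hexagonal lattice, hence bipartite, with a $\pm1$-coloring $\eta_v$, and one sets $(\Psi f)(p):=\delta_v\,\varepsilon(v)\,\eta_v(p)\,f(v)$ at $p\in C_v$, where the global signs $\delta_v\in\{\pm1\}$ are still to be chosen. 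Then $\Psi f$ is anti-constant along every intra-cell edge, so those contribute nothing to $\langle Q_{X'}\Psi f,\Psi f\rangle$, while each inter-cell edge across $\{v,w\}\in E(X)$ contributes $\lvert\delta_v\varepsilon(v)\eta_v(p)f(v)+\delta_w\varepsilon(w)\eta_w(q)f(w)\rvert^2$; using that the $2$-colorings match (up to a gluing-dependent sign) across each identification and that $X$ is bipartite, the $\delta_v$ can be chosen so that this equals $\lvert f(v)+f(w)\rvert^2$ for every such edge. Hence $\langle Q_{X'}\Psi f,\Psi f\rangle=c_{k,l}\langle Q_X f,f\rangle$ and $\langle\Psi f,\Psi f\rangle=N\langle f,f\rangle$, and min--max gives $\mu_i(Q_{X'})\le(c_{k,l}/N)\mu_i(Q_X)\le(3k/N)\lambda_i(X)$, i.e.\ $\lambda_{\card{V(X')}-i+1}(\Delta_{X'})=6-\mu_i(Q_{X'})\ge 6-(3k/N)\lambda_i(X)$, which is \eqref{Eq(lambda(GC(X))>6-lambda(X))}.

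For \eqref{Eq(lambda(GC(X))>6-e)} one has $l=0$, so there is no twist and the cells and their identifications are completely explicit; the goal is now a \emph{universal} upper bound on $\mu_1(Q_{X'}),\dots,\mu_{\card{V(X)}}(Q_{X'})$. I would construct, for each $v\in V(X)$, a test function that is a lowest non-constant Fourier mode along a cyclic ``zone'' of $\GC_{k,0}(X)$ through $C_v$ — a short hexagonal nanotube in the $3$-valent case, a cyclic ladder in the $4$-valent case, the number of faces around the zone being proportional to $k$ — localized near $C_v$, so that these $\card{V(X)}$ functions are linearly independent and, being localized at the distinct cells, their span carries $\langle Q_{X'}\cdot,\cdot\rangle$ under good control. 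A direct evaluation of $\langle Q_{X'}g,g\rangle/\langle g,g\rangle$ on such a mode returns $3-\sqrt{5+4\cos(2\pi/k)}$ in the $3$-valent case (the radicand being $\lvert 1+2e^{2\pi i/k}\rvert^2$, the square of the extremal Bloch amplitude of the nanotube) and $4-4\cos(2\pi/k)$, resp.\ $4-4\cos(\pi/k)$, in the $4$-valent case with $k$ even, resp.\ odd — the parity of $k$ entering because the gluing around the zone reverses orientation and thereby shifts the quantization of the circumferential wavenumber from step $2\pi/k$ to step $\pi/k$. Min--max on the span then bounds $\mu_{\card{V(X)}}(Q_{X'})$, hence every $\mu_i(Q_{X'})$ with $i\le\card{V(X)}$, above by the stated value, and passing back through $\lambda_{\card{V(X')}-i+1}(\Delta_{X'})=2r-\mu_i(Q_{X'})$ gives \eqref{Eq(lambda(GC(X))>6-e)}.

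The expected main obstacle, common to all three parts, is making the picture of the first paragraph precise: one must read off from the Goldberg--Coxeter construction an explicit, uniform description of the cells, of the graph inside a cell, and of the inter-cell gluings — most delicately for $l\neq 0$, where the subdivision pattern is rotated and the cells are glued with a shear — in order to evaluate $\langle\Delta_{X'}\cdot,\cdot\rangle$ and $\langle Q_{X'}\cdot,\cdot\rangle$ exactly on the transplanted functions, to pin down $c_{k,l}$, to carry out the sign bookkeeping behind $\Psi$ in the bipartite case, and, for \eqref{Eq(lambda(GC(X))>6-e)}, to verify both the orientation behavior of the zones that distinguishes the $k$-even and $k$-odd cases and the mutual compatibility (linear independence with controlled $Q$-form on the span) of the $\card{V(X)}$ zone test functions.
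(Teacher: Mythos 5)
Your arguments for \eqref{Eq(lambda(GC(X))<lambda(X))} and \eqref{Eq(lambda(GC(X))>6-lambda(X))} are sound and essentially the paper's. The constant-on-clusters map $\Phi$ is the paper's isometry $Q$ (with $c=1/\sqrt{N}$), and your Rayleigh-quotient computation is the identity $\transp{Q}\Delta_{X'}Q=(\mu(k,l)/N)\,\Delta_X$ followed by interlacing (Theorem~\ref{Thm(interlacing)}); the work you defer is exactly the combinatorics the paper supplies, namely that the number $\mu(k,l)$ of inter-cluster edges per edge of $X$ is the same for every edge (by the rotational symmetry of the cluster) and is at most $3k$, resp.\ $2k$. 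For \eqref{Eq(lambda(GC(X))>6-lambda(X))}, the sign bookkeeping behind your $\Psi$ is precisely Proposition~\ref{Prop(GC_bipartite)} (the upward/downward colouring matches across the $\pi$-rotation gluing), and once $\GC_{k,l}(X)$ is known to be bipartite the inequality is just \eqref{Eq(lambda(GC(X))<lambda(X))} reflected through $3$; your detour through $Q_{X'}$ is correct but re-derives this.

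The genuine gap is in \eqref{Eq(lambda(GC(X))>6-e)}, and it is not merely a matter of bookkeeping. First, a function $g$ supported on a cyclic strip of hexagons (or squares) and extended by zero cannot have signless Rayleigh quotient $\langle(6\,\id-\Delta_{X'})g,g\rangle/\langle g,g\rangle$ equal to $3-\sqrt{5+4\cos(2\pi/k)}$: every edge from a support vertex $p$ to the outside contributes $\card{g(p)+0}^2=\card{g(p)}^2$ to the signless form, and in a width-one ring every vertex has such an edge, so the quotient exceeds the Bloch value by an amount of order $1$ --- the wrong direction for the upper bound on $\mu_i(Q_{X'})$ you need (note the target is close to $0$ for large $k$, so the test function must be nearly anti-constant along \emph{all} incident edges, which extension by zero destroys). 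Second, a zone of $\GC_{k,0}(X)$ does not close up inside one cluster: a straight strip continues into adjacent clusters and closes only after traversing a straight-ahead circuit of $X$, so its circumference is $k$ times a quantity depending on $X$ and the quantization step $2\pi/k$ is not available; moreover the zones through different cells meet, so the $Q$-form is not controlled on their span. (Your explanation of the $k$-parity in the $4$-valent case is also not the operative mechanism; it comes from which symmetrized modes $u_{s,s}$ survive --- $u_{s,k/2}\equiv 0$ when $k$ is even --- not from an orientation reversal along a zone.)

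The device that makes \eqref{Eq(lambda(GC(X))>6-e)} work is global, boundary-error-free test functions: a $D_3$-invariant eigenfunction $u$ of the $(k,0)$-cluster with eigenvalue $\lambda$ automatically satisfies the Neumann matching condition $u(y)=u(x)$ across every inter-cluster edge, so copying $u$ to every cluster yields an exact eigenfunction of $\Delta_{\GC_{k,0}(X)}$ with eigenvalue $\lambda$ (Remarks~\ref{Rem(inv&alt)}). To promote this to the last $\card{V(X)}$ eigenvalues one uses the weighted transplantation $(Qf)(x)=u(x)f(p)$ for $x\in V(p)$, which satisfies $\transp{Q}Q=\id$ and $\transp{Q}\Delta_{X'}Q=\alpha\Delta_X+\lambda\,\id$ with $\alpha\geq 0$; since $\Delta_X\geq 0$, interlacing gives $\lambda_{\card{V(\GC_{k,0}(X))}-i+1}(\GC_{k,0}(X))\geq\lambda$ for $i\leq\card{V(X)}$. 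The constants in \eqref{Eq(lambda(GC(X))>6-e)} are then the largest $D_3$- (resp.\ $D_4$-) invariant eigenvalues of the cluster, read off from Lemma~\ref{Lem(all_inv&alt_evs)} and its $4$-valent analogue; if you wish to keep a min--max formulation, replace your localized zone modes by these replicated invariant cluster eigenfunctions multiplied by arbitrary $f\in\mathbb{C}^{V(X)}$.
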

Moreover we have the following result. 
\begin{Thm}
\label{Thm(arbitrary_lambda)}
Let $X$ be a $3$-valent 
{\upshape(}resp.\ $4$-valent{\upshape)}
graph satisfying the same assumptions 
as in Theorem~\ref{Thm(comp_with_X)}. 
For any real number $\lambda\in [0,6]$ 
{\upshape(}resp.\ $\lambda\in [0,8]${\upshape)}, 
there exists a sequence $(\lambda_k)_k$ 
of eigenvalues of\/ $\GC_{k,0}(X)$ 
which converges to $\lambda$ as $k$ tends to infinity. 
\end{Thm}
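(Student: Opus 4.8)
\emph{Strategy.} The plan is to exploit the fact that, as $k\to\infty$, the graph $\GC_{k,0}(X)$ looks on balls of growing radius like the infinite honeycomb lattice $H$ (in the $3$-valent case) or the square lattice $\mathbb{Z}^2$ (in the $4$-valent case), and then to transplant approximate eigenfunctions of the lattice Laplacian onto $\GC_{k,0}(X)$. First I would record, via the Bloch--Fourier transform, that $\Delta_H=3I-A_H$ (with $A_H$ the adjacency operator of $H$) has $\ell^2$-spectrum exactly $[0,6]$, since its adjacency eigenvalues $\pm\lvert 1+e^{i\theta_1}+e^{i\theta_2}\rvert$ sweep out $[-3,3]$ as $\theta_1,\theta_2$ vary, and that $\Delta_{\mathbb{Z}^2}$, with Bloch symbol $4-2\cos\theta_1-2\cos\theta_2$, has $\ell^2$-spectrum exactly $[0,8]$. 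Consequently, for any $\lambda$ in the relevant interval and any $\varepsilon>0$, the Weyl criterion for the bounded self-adjoint lattice Laplacian, followed by truncation to a finite ball and renormalization, yields a finitely supported function $\phi$, supported in some ball $B(o,R_\varepsilon)$ of the lattice, with $\lVert\phi\rVert=1$ and $\lVert\Delta\phi-\lambda\phi\rVert<\varepsilon$. (Concretely one may take $\phi$ proportional to $\chi_R\psi_\lambda$, where $\psi_\lambda$ is a bounded Bloch eigenfunction of the lattice at eigenvalue $\lambda$ and $\chi_R$ is a cut-off of Lipschitz slope $O(1/R)$ supported in $B(o,R)$; one then checks $\lVert\Delta\phi-\lambda\phi\rVert=O(1/R)$, using that $\lvert\psi_\lambda\rvert^2$ is periodic and not identically zero so that $\lVert\chi_R\psi_\lambda\rVert\asymp R$.)

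\emph{Local structure, the main step.} Next I would use the combinatorial description of the Goldberg--Coxeter construction to establish the key structural input: the non-hexagonal (resp.\ non-quadrilateral) faces of $\GC_{k,0}(X)$ are exactly the images of the non-hexagonal (resp.\ non-quadrilateral) faces of the fixed graph $X$ — hence finite in number and of bounded size — while everywhere else $\GC_{k,0}(X)$ is locally isomorphic to $H$ (resp.\ $\mathbb{Z}^2$), the honeycomb (resp.\ square) strip separating any two such faces having width of order $k$. A counting argument — there are $k^2\card{V(X)}$ vertices in all, while only $O(R^2)$ of them lie within distance $R$ of the $O(1)$ exceptional faces once $k$ is large — then shows that for every $R$ there is $k(R)$ such that for all $k\ge k(R)$ some vertex $o_k$ of $\GC_{k,0}(X)$ has its ball $B(o_k,R+1)$ isometric, as a labelled graph, to $B_H(o,R+1)$ (resp.\ to $B_{\mathbb{Z}^2}(o,R+1)$); equivalently, $\GC_{k,0}(X)$ Benjamini--Schramm converges to $H$ (resp.\ $\mathbb{Z}^2$). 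I expect this step — reading off from the definition of $\GC_{k,0}$ that its bulk is a flat lattice patch of ever-growing radius carrying only $O(1)$ localized defects — to be the main obstacle; the remainder is soft functional analysis.

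\emph{Transplantation and conclusion.} Finally I would transplant. Fixing $\lambda$ and $\varepsilon$, choosing $R=R_\varepsilon$ and $k\ge k(R)$, and using the isometry $B(o_k,R+1)\cong B_H(o,R+1)$, carry $\phi$ to a function $\widetilde\phi$ on $V(\GC_{k,0}(X))$ supported in $B(o_k,R)$ and extended by zero. Since the Laplacian is a nearest-neighbour operator and $\Delta_H\phi-\lambda\phi$ is supported in $B_H(o,R+1)$, one obtains $\lVert\widetilde\phi\rVert=1$ and $\lVert\Delta_{\GC_{k,0}(X)}\widetilde\phi-\lambda\widetilde\phi\rVert=\lVert\Delta_H\phi-\lambda\phi\rVert<\varepsilon$, whence — by the spectral theorem for the self-adjoint operator $\Delta_{\GC_{k,0}(X)}$ on the finite-dimensional space $\mathbb{C}^{V(\GC_{k,0}(X))}$ — there is an eigenvalue $\mu_k$ of $\GC_{k,0}(X)$ with $\lvert\mu_k-\lambda\rvert<\varepsilon$. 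To conclude, set $K_m:=k(R_{1/m})$ (which may be taken increasing in $m$), and for each $k$ let $\lambda_k$ be an eigenvalue of $\GC_{k,0}(X)$ nearest to $\lambda$; then $k\ge K_m$ forces $\lvert\lambda_k-\lambda\rvert\le\lvert\mu_k-\lambda\rvert<1/m$, so $\lambda_k\to\lambda$ as $k\to\infty$, as required. (This is consistent with, and refines, Theorem~\ref{Thm(o(k^2)_evs)}, which addresses the endpoints $0$ and $6$ or $8$ of these spectral intervals; indeed the same Benjamini--Schramm convergence shows that a positive proportion of the eigenvalues of $\GC_{k,0}(X)$ accumulate in each subinterval.)
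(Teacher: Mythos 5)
Your argument is correct, but it is a genuinely different route from the paper's. The paper proves this theorem by \emph{exact} computation: it diagonalizes the $(k,0)$-cluster via the torus $T(k)$ and $D_3$-symmetrization, observes (Remark~\ref{Rem(inv&alt)}(1)) that a $D_3$-invariant cluster eigenfunction solves a Neumann problem and hence extends, by copying it to every cluster, to a genuine eigenfunction of $\GC_{k,0}(X)$, and then checks from Lemma~\ref{Lem(all_inv&alt_evs)} that the resulting explicit eigenvalues $\lambda^{+}_{s,0}=3+\sqrt{5+4\cos(2\pi s/k)}$, $\lambda^{-}_{j,k-j}$, etc.\ (resp.\ $\lambda_{s,s}=4-4\cos(2\pi s/k)$) become dense in $[0,6]$ (resp.\ $[0,8]$). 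You instead build Weyl quasi-modes for the infinite honeycomb (resp.\ square) lattice and transplant them into a large lattice ball inside $\GC_{k,0}(X)$; the finite-dimensional spectral estimate $\operatorname{dist}(\lambda,\operatorname{spec}\Delta)\le\lVert\Delta\widetilde\phi-\lambda\widetilde\phi\rVert$ then finishes. Your method is softer and more robust — it needs none of the explicit diagonalization, works verbatim for $\GC_{k,l}$ and for any family containing growing lattice balls — but yields only approximate eigenvalues, whereas the paper's exact eigenvalues and eigenfunctions are reused for \eqref{Eq(lambda(GC(X))>6-e)} and Theorems~\ref{Thm(eigen24_3-vlnt)}--\ref{Thm(eigen4_4-vlnt)} (multiplicities of $2$ and $4$). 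Two small remarks: the step you flag as the main obstacle is actually immediate here, since the $(k,0)$-cluster is by construction an induced honeycomb (resp.\ square-lattice) patch of combinatorial radius $\Theta(k)$, so a single ball $B(o_k,R+1)$ deep inside one cluster suffices and no face-counting or Benjamini--Schramm statement is needed; and your description of the faces of $\GC_{k,0}(X)$ implicitly assumes a surface embedding, whereas the theorem is stated for abstract graphs with vertex orientations — harmless, since that part of your argument is not used.
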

As the following theorems show 
the Goldberg-Coxeter constructions have 
also steady eigenvalues. 
\begin{Thm}
\label{Thm(eigen24_3-vlnt)}
Let $X$ be a connected, finite and simple $3$-valent graph 
equipped with an orientation at each vertex, 
and $\GC_{2k,0}(X)$ be its Goldberg-Coxeter constructions 
for $k\in \mathbb{N}$. 
\begin{enumerate}
	\item 
	$\GC_{2k,0}(X)$ has eigenvalue $4$, 
	whose multiplicity is at least 
	$\lceil k/2\rceil$. 
	\item 
	$\GC_{2k,0}(X)$ has eigenvalue $2$, 
	whose multiplicity is at least 
	$\lfloor k/2\rfloor$. 
\end{enumerate}
\end{Thm}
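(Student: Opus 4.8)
The plan is to write down, as explicitly as possible, a family of $k$ linearly independent eigenfunctions of the Laplacian of $\GC_{2k,0}(X)$ and to read off their eigenvalues, which will turn out to be $4$ (for $\lceil k/2\rceil$ of them) and $2$ (for $\lfloor k/2\rfloor$ of them). The mechanism is a purely local one. Call a \emph{railroad} a cyclic sequence $h_1,\dots,h_L$ of hexagonal faces in which $h_i$ and $h_{i+1}$ share an edge and these sharing edges are opposite edges of $h_i$. A railroad of even length $L$ inside a $3$-valent graph has $2L$ ``rung'' vertices, namely the endpoints $P_1,\dots,P_L$ (on the top boundary) and $Q_1,\dots,Q_L$ (on the bottom) of the shared edges, where $P_i\sim Q_i$, and where each $P_i$ has, besides $Q_i$, only two neighbours $M_{i-1},M_i$ on the top boundary cycle (and symmetrically for $Q_i$). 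On these $2L$ vertices define $f_\pm$ by $P_i\mapsto(-1)^i$, $Q_i\mapsto\pm(-1)^i$, extended by $0$ elsewhere. Then the eigenvalue equation at $P_i$ reads $(3-\mu)(-1)^i=\pm(-1)^i$, forcing $\mu=4$ for $f_-$ and $\mu=2$ for $f_+$; the ``middle'' vertices $M_i$ carry no equation but only the cancellation condition $\sum_{\text{nbrs in the support}}f=f(P_i)+f(P_{i+1})=(-1)^i+(-1)^{i+1}=0$, which holds automatically — and it is precisely the evenness of $L$ (hence, ultimately, of the parameter $2k$) that makes $f_\pm$ well defined around the cycle and makes the boundary cancellations close up.

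The first step of the argument proper is to exhibit inside $\GC_{2k,0}(X)$, using the explicit combinatorial description recalled earlier (in which the neighbourhood of each edge of $X$ is a patch of the hexagonal lattice, these patches being glued along the edges and around the faces of $X$), a distinguished collection of order $k$ even-length railroads together with control on how they meet. Here I would use that $\GC$ constructions compose multiplicatively in the parameter, so $\GC_{2k,0}=\GC_{2,0}\circ\GC_{k,0}=\GC_{k,0}\circ\GC_{2,0}$, and that $\GC_{2,0}$ (chamfering) creates a railroad around every face, in order to track how a railroad is subdivided on passing to $\GC_{2k,0}(X)$ and thereby produce a ``stack'' of roughly $k$ mutually parallel even railroads inside a single region of the graph. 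Given such a collection, the previous paragraph supplies, for each railroad in it, one eigenfunction of eigenvalue $4$ and one of eigenvalue $2$; the asymmetric counts $\lceil k/2\rceil$ and $\lfloor k/2\rfloor$ would come from retaining, alternately along the stack, the $f_-$ from some railroads and the $f_+$ from the others, in such a way that the retained functions have essentially disjoint (or at any rate linearly independent) supports.

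The step I expect to be the real obstacle is the middle one: pinning down exactly which even railroads sit inside $\GC_{2k,0}(X)$, with enough information about the hexagons that lie on two of them at once, to guarantee that one can extract $\lceil k/2\rceil$ genuinely independent eigenvalue-$4$ functions and $\lfloor k/2\rfloor$ independent eigenvalue-$2$ functions — the linear-independence bookkeeping, and the appearance of $2k$ rather than $k$ and of the ceiling/floor split, all being traceable to parities hidden in this combinatorics. A subsidiary technical point is that near the images of the vertices and faces of $X$ the ambient graph is no longer the hexagonal lattice, so one must check by hand that the chosen railroads really close up there and that the outside-cancellation conditions persist at those exceptional vertices; this is routine once the railroads are correctly located, but it is the place where the hypothesis that the parameter is even is genuinely indispensable.
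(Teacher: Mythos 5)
Your local mechanism is correct: on a closed railroad of even length whose consecutive hexagons are glued along opposite edges, the function equal to $(-1)^i$ on the top endpoints $P_i$ of the rungs, to $\mp(-1)^i$ on the bottom endpoints $Q_i$, and to $0$ elsewhere is an eigenfunction with eigenvalue $4$ (resp.\ $2$), provided the railroad is simple enough that each vertex outside the rung set meets only the two rungs $P_{i-1},P_i$ whose values it must cancel. Note that this is a genuinely different route from the paper's, which never localizes on railroads: there one produces $D_3$-invariant eigenfunctions with eigenvalues $4$ and $2$ on a single $(2k,0)$-cluster by Fourier analysis on a torus and symmetrization (Lemma~\ref{Lem(eigen24_3-vlnt_clstr)}) and then copies the same function to every cluster, the $D_3$-invariance being exactly the Neumann matching condition of Remark~\ref{Rem(inv&alt)}; that makes the construction, and the counts $\lceil k/2\rceil$ and $\lfloor k/2\rfloor$, completely independent of the global structure of $X$.

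The gap in your argument is the middle step, and it is not merely technical. The structural claim you propose to start from is false: in $\GC_{2,0}(X)$ the hexagons surrounding a face of $X$ do \emph{not} form a railroad. The hexagon $h_e$ attached to an edge $e=pq$ of $X$ has cyclic vertex sequence $c_p,x_p,y_q,c_q,y_q',x_p'$, where $c_p,c_q$ are the cluster centres (antipodal in $h_e$) and $x_p,y_q$ sit at the corner corresponding to a face $F$; the edges $h_e$ shares with its two neighbours around $F$ are $(c_p,x_p)$ and $(y_q,c_q)$, which are not opposite in $h_e$. Going straight through $h_e$ from the edge $(c_p,x_p)$ exits through $(c_q,y_q')$, i.e.\ toward the \emph{other} face at $q$: the railroads of $\GC_{2,0}(X)$, and more generally of $\GC_{n,0}(X)$, follow the zigzags (left--right alternating closed edge paths) of $X$, not its faces. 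Their number, lengths, parities and self-intersection patterns therefore depend on the zigzag structure of $X$, which the theorem's hypotheses do not constrain: a zigzag of odd length $L$ contributes, for the middle parameter, a railroad with $Lk$ rungs, which is odd for odd $k$, and a zigzag traversing an edge of $X$ twice can force a railroad to revisit a hexagon, breaking the alternating sign pattern and the cancellation at the non-support vertices. So even once the railroads are correctly located, extracting $\lceil k/2\rceil$ independent eigenvalue-$4$ functions and $\lfloor k/2\rfloor$ independent eigenvalue-$2$ functions \emph{uniformly in $X$} requires an argument you have not supplied; this is exactly where the content of the theorem lies, and as written the proposal does not establish it.
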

In Theorem~\ref{Thm(eigen24_3-vlnt)}, 
$\lceil x\rceil$ (resp.\ $\lfloor x\rfloor$) 
denotes the smallest integer $\geq x$ 
(resp.\ the largest integer $\leq x$). 
\begin{Thm}
\label{Thm(eigen4_4-vlnt)}
Let $X$ be a connected, finite and simple 
$4$-valent graph 
equipped with an orientation at each vertex, 
and $\GC_{2k,0}(X)$ be its Goldberg-Coxeter constructions 
for $k\in \mathbb{N}$. 
Then, for $k\geq 2$, 
$\GC_{2k,0}$ has eigenvalue $4$, 
whose multiplicity is at least 
$\lceil (k-1)/2\rceil$. 
\end{Thm}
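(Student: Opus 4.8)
The plan is to exhibit an explicit family of eigenfunctions of $\Delta_{\GC_{2k,0}(X)}$ with eigenvalue $4$, supported on the ``new'' part of the graph that the Goldberg-Coxeter construction inserts along each edge and around each vertex of $X$. Recall that for $l=0$ the construction $\GC_{2k,0}(X)$ is obtained by replacing each edge of $X$ by a strip of the square lattice $2k$ units long and gluing these strips around each original vertex according to the chosen orientation; in particular, $\GC_{2k,0}(X)=\GC_{k,0}(\GC_{2,0}(X))$ and there is a combinatorial self-similarity that I would exploit, but the cleanest route is to build eigenfunctions by hand. First I would set up coordinates: along each edge-strip, a vertex is indexed by a pair $(i,j)$ with $0\le i\le 2k$ running along the edge and $j$ running transversally over a fixed small range determined by the $4$-valent gluing; the Laplacian acts on a function $f$ of these indices as a discrete Laplacian $4f(i,j)-\sum_{\text{nbrs}} f$, and on the square-lattice interior the candidate $f(i,j)=g(i)h(j)$ with $g(i)=\cos$ or $\sin$ of a suitable frequency and $h$ a fixed transversal profile satisfies the eigenvalue equation $\Delta f=4f$ exactly when the ``separation constant'' is chosen so that the longitudinal and transversal contributions cancel — e.g. with frequency $\pi/2$ longitudinally (so that $g(i-1)+g(i+1)=0$) and $h$ chosen so that the transversal part contributes $0$ as well. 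Because $2k$ is even, the frequency $\pi/2$ is compatible with the discretization.

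Second, I would impose the boundary/junction conditions at the two ends $i=0$ and $i=2k$ of each strip, i.e.\ at the vertices of $\GC_{2k,0}(X)$ lying near an original vertex $p\in V(X)$. The idea is to choose the longitudinal phase on each of the four strips meeting at $p$ so that $f$ and its required neighbor-sums match up consistently around $p$; since $g(i)$ has frequency $\pi/2$, shifting the origin of $i$ by one changes $\cos$ to $\mp\sin$, so the matching reduces to a small linear system at each original vertex whose solution space has dimension scaling linearly in $k$. Concretely, the free parameters are the amplitudes of the longitudinal mode on each edge-strip (two real parameters per edge, for $\cos$ and $\sin$), subject to roughly $\deg_X(p)=4$ linear constraints per original vertex coming from the eigenvalue equation at the junction vertices; a rank count then gives at least $c\,k\,|E(X)|$-dimensional space of solutions for some fixed $c>0$, and I would prune this to the stated bound $\lceil (k-1)/2\rceil$, the discrepancy between the crude count and the clean bound being absorbed by discarding the low-frequency modes that might collide with boundary degeneracies and by the restriction $k\ge 2$ (which guarantees the strips are long enough to carry a nontrivial $\pi/2$-mode with the junction constraints).

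The main obstacle I expect is exactly this junction bookkeeping: making the linear system at the original vertices precise enough to extract a guaranteed lower bound on the multiplicity, rather than merely ``generically nonzero.'' The transversal profile $h$ is forced by the $4$-valent gluing pattern and must be verified to be the same eigenprofile on every strip so that the global function is well-defined; and one must check that the eigenfunctions produced for different longitudinal frequencies (or different choices of $\cos$ versus $\sin$) are linearly independent, which I would do by looking at their restriction to a single long strip where they are manifestly independent trigonometric modes. A secondary technical point is to confirm that $4$ is not already accounted for in a way that would make these eigenfunctions spurious — but since we are exhibiting honest eigenfunctions this is automatic; the real content is the independence count. Once the dimension count is pinned down, the theorem follows, and I would remark that the parity restriction to $2k$ (rather than arbitrary $k$) is precisely what makes the $\pi/2$ longitudinal frequency resonate with the lattice, explaining why the statement is about $\GC_{2k,0}(X)$ and why $4$ — being the value $4+4\cos(\pi/2)$ appearing in \eqref{Eq(lambda(GC(X))>6-e)} — is the natural one to land on.
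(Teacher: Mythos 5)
Your proposal has a genuine gap, and it starts from a structural misreading of the construction. For $l=0$ the graph $\GC_{2k,0}(X)$ is not obtained by replacing each edge of $X$ by a long lattice strip and gluing strips at junctions; it is obtained by placing a $(2k,0)$-cluster $\square(2k)$ (a square block with $(2k)^2$ vertices) at each \emph{vertex} of $X$ and gluing adjacent clusters directly to each other across each edge of $X$ (this is consistent with $\card{V(\GC_{2k,0}(X))}=4k^2\card{V(X)}$, which an edge-strip picture would not reproduce). Because of this, your separation-of-variables ansatz on a strip with a longitudinal $\pi/2$-mode and a transversal profile does not correspond to the actual graph, and the objects you would need to match at ``junctions'' are not the ones that occur.

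More importantly, the step you yourself identify as the main obstacle --- the matching conditions where the local pieces meet, and the extraction of a guaranteed dimension count from them --- is precisely the content of the theorem, and your proposal leaves it unresolved: ``a rank count then gives at least $c\,k\,\card{E(X)}$ \dots and I would prune this to the stated bound'' is not an argument, and the claimed count cannot be right as stated since the theorem's bound is independent of $X$. The paper's proof circumvents all junction bookkeeping with one idea you are missing: if $u$ is a \emph{$D_4$-invariant} eigenfunction on a single $(2k,0)$-cluster, then the eigenvalue equation on the cluster is equivalent to a Neumann problem \eqref{Eq(Neumann_prob)}, so placing the \emph{same} function $u$ on every cluster automatically produces a global eigenfunction of $\GC_{2k,0}(X)$ with the same eigenvalue, for \emph{any} $4$-valent $X$. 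The problem then reduces to counting $D_4$-invariant eigenfunctions of $\square(2k)$ with eigenvalue $4$, which the paper does explicitly via the Fourier modes $v_{s,t}$ on the torus $S(4k)$: the eigenvalue \eqref{Eq(4-vlnt_lambda_st)} equals $4$ exactly when $s\pm t\in\{2k,6k\}$ (a full one-parameter family, not only your $\pi/2$-frequency), and after symmetrizing over $D_4$, discarding the identically vanishing $u_{s,t}$, and accounting for the identifications among the $u_{s,t}$, exactly $\lceil(k-1)/2\rceil$ linearly independent invariant eigenfunctions survive. Without the invariance-implies-gluability mechanism and without the explicit determination of which symmetrized modes vanish, your outline cannot be completed to a proof.
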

On the other hand, 
the multiplicities of eigenvalues $2$ and $4$ 
would depend on the graph structure of $X$ 
and the following is obtained.
\begin{Thm}
\label{Thm(3m-gons)}
Let $X$ be a connected, finite and simple 
$3$-valent graph which is embedded in a plane. 
Assume that the number of edges surrounding each face 
is divisible by $3$. 
Then the following hold. 
\begin{enumerate}
	\item \label{Item(3m-gons_1)}
	The multiplicity of eigenvalue $4$ 
	of\/ $\GC_{2,0}(X)$ is at least $3$. 
	\item \label{Item(3m-gons_2)}
	For any $k\in \mathbb{N}$, 
	both $\GC_{k,0}(X)$ and $\GC_{k,k}(X)$ 
	have eigenvalue $4$ {\upshape(}resp.\ $2${\upshape)}, 
	whose multiplicity 
	is at least $\lceil k/2\rceil$ 
	{\upshape(}resp.\ $\lfloor k/2\rfloor${\upshape)}. 
\end{enumerate}
\end{Thm}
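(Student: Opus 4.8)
The plan is to prove both assertions by exhibiting explicit eigenfunctions of the Laplacian for the eigenvalues $4$ and $2$ and then bounding from below the dimension of the spaces they span. Since $\GC_{k,0}(X)$ and $\GC_{k,k}(X)$ are $3$-valent, a function $f$ is a $4$-eigenfunction (resp.\ a $2$-eigenfunction) of the Laplacian exactly when $\sum_{q\in N(p)}f(q)=-f(p)$ (resp.\ $=+f(p)$) at every vertex $p$, i.e.\ when $f$ is an eigenfunction of the adjacency operator with eigenvalue $-1$ (resp.\ $+1$); so the task is to produce many adjacency eigenfunctions with eigenvalue $\pm1$. I would begin from the fact that, away from small neighbourhoods of the vertices of $X$, both $\GC_{k,0}(X)$ and $\GC_{k,k}(X)$ are locally isomorphic to a patch of the regular hexagonal lattice, that the edges lying over a fixed edge of $X$ form a ``strip'', and that the edges lying over a fixed face of length $\ell$ form a cyclic chain of $\ell$ such strips. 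On the hexagonal lattice the adjacency operator has $\pm1$ in its spectrum precisely at the Bloch momenta on the union of the three lines $\theta_1=\pi$, $\theta_2=\pi$, $\theta_1-\theta_2=\pi$ in the Brillouin torus, because $\bigl|1+e^{i\theta_1}+e^{i\theta_2}\bigr|=1$ is equivalent to $\cos\tfrac{\theta_1}{2}\,\cos\tfrac{\theta_2}{2}\,\cos\tfrac{\theta_1-\theta_2}{2}=0$; the corresponding Bloch eigenfunctions are, up to a scalar, unimodular and constant up to sign along one of the three families of straight zigzag chains of the lattice.

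Granting this, the construction proceeds as follows. For each of the three chain-directions I would take the one-parameter family of such hexagonal-lattice eigenfunctions on a single strip (one free Bloch frequency along the strip, with the transverse sign-alternation fixed), and attempt to glue them across the vertices of $X$ into a single function on $\GC_{k,0}(X)$, resp.\ $\GC_{k,k}(X)$. The obstruction to gluing is a holonomy: crossing a vertex of $X$ rotates the chain-direction by $\pm2\pi/3$ (the three strips meet there at $120^{\circ}$) and contributes a definite sign, so transporting the eigenfunction once around a face bounded by $\ell$ edges rotates the direction by $\ell\cdot\tfrac{2\pi}{3}$, which is a multiple of $2\pi$ precisely because $3\mid\ell$ by hypothesis, and a parallel bookkeeping shows that the accumulated sign around each face is trivial under the same assumption. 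Since $X$ --- hence $\GC_{k,0}(X)$ and $\GC_{k,k}(X)$, by Proposition~\ref{Prop(embedded_on_surface)} --- is embedded in the plane, the face boundaries generate its cycle space, so triviality of the holonomy around every face upgrades the locally defined functions to genuine global eigenfunctions. Making this precise --- defining the transport maps between adjacent strips from the prescribed orientation at each vertex of $X$ and checking that the per-corner rotations and signs multiply, around every face, to the identity --- is where the divisibility hypothesis enters essentially, and it is also, I expect, the main obstacle; it is moreover the only place the plane (rather than higher genus) is used. I would also expect this to be exactly why the two subfamilies $\GC_{k,0}$ and $\GC_{k,k}$, rather than arbitrary $\GC_{k,l}$, occur in the statement: for these the strips are aligned with one of the three zigzag directions, so the transverse alternation is compatible with the strip periodicity.

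It then remains to count. For $\GC_{k,0}(X)$ and $\GC_{k,k}(X)$ the circumference of a strip is governed by $k$, so matching the transverse alternation to the periodicity along the strips selects, in each chain-direction, of order $k/2$ admissible Bloch frequencies; separating those producing adjacency eigenvalue $-1$ from those producing $+1$ according to the parity of the relevant index yields at least $\lceil k/2\rceil$ eigenfunctions with Laplacian eigenvalue $4$ and at least $\lfloor k/2\rfloor$ with eigenvalue $2$, and these are linearly independent since they are distinguished by their frequencies along the strips; this proves the second assertion and, in particular, sharpens Theorem~\ref{Thm(eigen24_3-vlnt)}, the new ingredient being precisely the gluing across faces made possible by $3\mid\ell$. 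For the first assertion one runs the same construction on $\GC_{2,0}(X)$ but now retains all three chain-directions at once: when $k=2$ each of the three directions contributes (at least) one eigenfunction with eigenvalue $4$, and --- using once more that each face has length divisible by $3$, so that each of the three families of zigzag chains closes up on $\GC_{2,0}(X)$ --- a short argument shows these three functions are linearly independent, giving multiplicity at least $3$.
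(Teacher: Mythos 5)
There is a genuine gap, and you have in fact pointed at it yourself: the entire content of the theorem is the assertion that the locally defined lattice eigenfunctions glue into global ones, i.e.\ that the holonomy (both the permutation of the three chain-directions \emph{and} the accumulated sign of the transverse alternation) around every face of $X$ is trivial when $3$ divides the face length. You defer exactly this step (``is where the divisibility hypothesis enters essentially, and it is also, I expect, the main obstacle''), so the proposal is a plausible strategy rather than a proof. This is precisely where the paper invests its effort: Proposition~\ref{Prop(coherent_numbering)} shows that (F) yields a coherent edge numbering by a genuine homological argument (the left/right-turn cocycle $\tau$ descends to a homomorphism on $H_1(X,\mathbb{Z})$ which vanishes on face boundaries), and Proposition~\ref{Prop(F_implies_C)} upgrades this, by a face-reduction induction, to the vertex coloring (C), which is the actual gluing datum used in Proposition~\ref{Prop(C_and_mult)} (four clusters around a black vertex and its three white neighbours are merged into one larger cluster, and $D_3$-invariant cluster eigenfunctions are transplanted; (C-iii) is what makes this globally consistent, and $\GC_{k,k}=\GC_{k,0}\circ\GC_{1,1}$ together with the fact that $\GC_{1,1}$ of any plane graph satisfies (C) handles the second family). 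The sign bookkeeping you wave at is not automatic: (C-i)--(C-ii) alone give eigenvalue $4$ but not the stated multiplicities, and the cube (Example~\ref{Ex(F-CN-C)}) shows that a coloring can satisfy (C-i)--(C-ii) while the holonomy condition (C-iii) fails.

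Two further gaps. First, the counting: ``of order $k/2$ admissible Bloch frequencies'' does not deliver the exact bounds $\lceil k/2\rceil$ and $\lfloor k/2\rfloor$; the paper obtains these from the explicit enumeration in Lemma~\ref{Lem(eigen24_3-vlnt_clstr)} of which $D_3$-invariant eigenfunctions $u_{s,t}^{\pm}$ have eigenvalue $4$ or $2$ (the parameters $s=3k$, $t=3j$ in the list \eqref{Eq((pm,s,t)_for_u_s,t^pm)}), together with their linear independence, which follows from the dimension count in the proof of Theorem~\ref{Thm(o(k^2)_evs)} rather than from ``being distinguished by frequencies.'' Second, for part~(1) your ``short argument'' for independence of the three direction-eigenfunctions on $\GC_{2,0}(X)$ is absent; the paper's mechanism here is different and completely explicit: the numbering (N) partitions $V(\GC_{2,0}(X))$ into four classes, and every function constant on classes with $\alpha_0+\alpha_1+\alpha_2+\alpha_3=0$ is a $4$-eigenfunction, giving a visible $3$-dimensional eigenspace. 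Your Bloch-wave identity $\bigl\lvert 1+e^{i\theta_1}+e^{i\theta_2}\bigr\rvert=1$ and its factorization are correct and consistent with \eqref{Eq(3-vlnt_lambda_st^pm)}, so the local analysis is sound; what is missing is the global consistency argument that constitutes the theorem.
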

The result (\ref{Item(3m-gons_1)}) of 
Theorem \ref{Thm(3m-gons)} is also obtained by 
observing that the $\GC_{2.0}(X)$ 
is a covering graph of the $K_4$ graph.
\par
Examples of numerical computations of 
multiplicities of eigenvalues $2$ and $4$ 
are shown in Tables \ref{Table(multiplicity:4)} 
and \ref{Table(multiplicity:2)}.
\begin{table}[htp]
  \centering
  \caption{The multiplicities of eigenvalue $4$ 
  for $\GC_{k,0}(X)$ ($k=1,2,\ldots,10$)}
  \begin{tabular}{l||c|c|c|c|c|c|c|c|c|c}
    X & $(1,0)$ & $(2,0)$ & $(3,0)$ & $(4,0)$ & $(5,0)$ & 
    $(6,0)$ & $(7,0)$ & $(8,0)$ & $(9,0)$ & $(10,0)$
    \\
    \hline \hline
    tetrahedron & $3$ & $6$ & $9$ & $12$ & $15$ & 
    $18$ & $21$ & $24$ & $27$ & $30$
    \\
    \hline
    cube & $3$ & $4$ & $3$ & $12$ & $3$ & 
    $20$ & $3$ & $28$ & $3$ & $36$
    \\
    \hline
    dodecahedron & $0$ & $6$ & $0$ & $18$ & $0$ & 
    $30$ & $0$ & $42$ & $0$ & $54$
    \\
    \hline
    octahedron & $3$ & $4$ & $3$ & $12$ & $3$ & 
    $20$ & $3$ & $28$ & $3$ & $36$
  \end{tabular}
  \label{Table(multiplicity:4)}
\end{table}
\begin{table}[htp]
  \centering
  \caption{The multiplicities of eigenvalue $2$ 
  for $\GC_{k,0}(X)$ ($k=1,2,\ldots,10$)}
  \begin{tabular}{l||c|c|c|c|c|c|c|c|c|c}
    X & $(1,0)$ & $(2,0)$ & $(3,0)$ & $(4,0)$ & $(5,0)$ & 
    $(6,0)$ & $(7,0)$ & $(8,0)$ & $(9,0)$ & $(10,0)$
    \\
    \hline \hline
    tetrahedron & $0$ & $3$ & $6$ & $9$ & $12$ & 
    $15$ & $18$ & $21$ & $24$ & $27$
    \\
    \hline
    cube & $3$ & $4$ & $3$ & $12$ & $3$ & 
    $20$ & $3$ & $28$ & $3$ & $36$
    \\
    \hline
    dodecahedron & $5$ & $6$ & $5$ & $18$ & $5$ & 
    $30$ & $5$ & $42$ & $5$ & $54$
    \\
    \hline
    octahedron & $0$ & $0$ & $1$ & $1$ & $0$ & 
    $1$ & $0$ & $1$ & $1$ & $0$
    \\
  \end{tabular}
  \label{Table(multiplicity:2)}
\end{table}
\par
Problems on eigenvalues of combinatorial Laplacian 
on regular graphs are extensively investigated. 
In particular, an explicit formula of 
a limit density of eigenvalue distributions of 
certain sequences of regular graphs was obtained 
in \cite{MR629617}, and 
its geometric proof using a trace formula is given 
in \cite{MR2218022} (see also \cite{MR1989434}). 
One of points in these works is that the sequence 
$\{X_{n}\}$ of $q$-regular graphs with number of vertices 
$\card{X_{n}}\to \infty$ as $n \to \infty$ 
is assumed to have large girths 
$g(X_{n}) \to \infty$ as $n \to \infty$. 
From this assumption, the graphs $X_{n}$ get similar, 
as $n\to \infty$, to a universal covering graph, 
namely a $q$-regular tree at least locally, and then 
a trace formula becomes able to apply. 
The girths of the Goldberg-Coxeter constructions 
$\{\GC_{k,l}(X)\}_{k,l}$ with an initial graph $X$ 
are uniformly bounded with respect to 
the parameters $k$ and $l$, 
and hence it would not be so straightforward 
to apply a trace formula to obtain a limit distribution 
of the eigenvalue distributions. 
\par
This paper is organized as follows. 
In Section~\ref{Sec(GC_construction)}, 
after giving the precise definition of 
the Goldberg-Coxeter constructions 
$\GC_{k,l}(X)$, 
we study their structure 
which is related with the spectral problems. 
In particular, variants of coloring problems 
necessary for our purposes are collected 
in Subsection \ref{Sec(conditions_on_3-vlnt_graphs)}. 
Some of them might be obtained from well-known results. 
For example, readers are referred to 
the interesting papers 
\cite{MR0335326, MR0335327, MR0335328, 
MR0357197, MR0454979, MR0498207} 
due to Fisk where one can find a lot of results 
on various kinds of coloring problems. 
However, we think that there are no statements 
which are precisely the same and 
the proofs do not involve so much. 
Thus we decided to put their proofs here 
for completeness. 
In Section~\ref{Sec(comp_of_ev)}, 
we obtain two kinds of comparisons of the eigenvalues, 
one is that between the eigenvalues of $X$ 
and those of $\GC_{k,l}(X)$, 
and the other is that between 
the eigenvalues of the $(k,0)$-cluster 
and those of $\GC_{k,0}(X)$. 
In Section~\ref{Sec(ev_of_clstr)}, 
all the eigenvalues of the $(k,0)$-cluster 
are found and the proofs of 
Theorems~\ref{Thm(o(k^2)_evs)}, 
\ref{Thm(comp_with_X)} and 
\ref{Thm(arbitrary_lambda)} complete. 
In Section~\ref{Sec(ev_2_4)}, 
we first present proofs of 
Theorem~\ref{Thm(eigen24_3-vlnt)} and 
\ref{Thm(eigen4_4-vlnt)}. 
At the end of this paper, 
we shall give a few criteria 
for a $3$-valent plane graph $X$ so that 
some $\GC_{k,0}(X)$'s have eigenvalues $2$ or $4$, 
which proves Theorem~\ref{Thm(3m-gons)}. 
%
\section{Goldberg-Coxeter constructions}
\label{Sec(GC_construction)}
This section studies the structure 
of Goldberg-Coxeter constructions, 
which shall be necessary in the subsequent sections. 
\par
The notion of Goldberg-Coxeter constructions is defined, 
due to Deza-Dutour~\cite{MR2429120,MR2035314}, 
for a plane graph. 
The definition can be extended for a nonplanar graph $X$; 
indeed, $X$ has only to be equipped with 
an ``orientation at each vertex'', and if, in particular, 
$X$ is ``appropriately'' embedded on an oriented surface, 
then the constructions can be done on the surface 
(see Proposition~\ref{Eq(edge_numbering)}). 
Let us give the precise definitions. 
To make description clear, 
we use the ring $\mathbb{Z}[\omega]$ of Eisenstein integers 
and the ring $\mathbb{Z}[i]$ of Gaussian integers, 
where $\omega=e^{\pi i/3}$ and $i=\sqrt{-1}$. 
$\mathbb{Z}[\omega]$ gives 
the triangular lattice on $\mathbb{C}$ having 
$0,1$ and $\omega$ as its fundamental triangle, 
while $\mathbb{Z}[i]$ gives the square lattice 
on $\mathbb{C}$ having 
$0,1,1+i$ and $i$ as its fundamental square. 
\begin{Def}[cf.\ Deza-Dutour~\cite{MR2429120,MR2035314}]
\label{Def(GC)}
Let $X$ be a connected, finite and simple 
$3$- or $4$-valent (abstract) graph 
equipped with an \emph{orientation at each vertex} 
in the sense that, for each $p\in V(X)$,  
the set of edges emanating from $p$ is ordered. 
For $(k,l)\in \mathbb{Z}^2$, $(k,l)\neq (0,0)$, 
the \emph{Goldberg-Coxeter construction} of $X$ 
with parameters $k$ and $l$ is 
defined through the following steps. 
\begin{enumerate}[label=(\roman*),ref=\roman*]
	\item 
          \label{enum:def:1}
	Let us first consider 
	the equilateral triangle 
	$\triangle=\triangle(0,z,\omega z)$ 
	in $\mathbb{Z}[\omega]$ 
	having the vertices $0,z=k+l\omega$ and $\omega z$ 
	(resp.\ the square 
	$\square=\square(0,z,(1+i)z,iz)$ 
	in $\mathbb{Z}[i]$ having the vertices 
	$0,z=k+li,(1+i)z$ and $iz$). 
	\item 
          \label{enum:def:2}
	Let us take all the small 
	triangles in $\mathbb{Z}[\omega]$ 
	(resp.\ squares in $\mathbb{Z}[i]$) 
	intersecting with $\triangle$ 
	(resp.\ $\square$) 
	in its interior and 
	join the barycenters of the neighboring small 
	triangles (resp.\ squares) 
	to obtain a graph, 
	which is, 
	as an associated (abstract) graph with $p$ 
	for each $p\in V(X)$, 
	denoted by $\overline{\triangle}(p)
	=\overline{\triangle}_{k,l}(p)$ 
	(resp.\ $\overline{\square}(p)
	=\overline{\square}_{k,l}(p)$). 
	Let us take a correspondence between 
	an edge emanating from $p$ and 
	an edge of $\triangle$ (resp.\ $\square$) 
	so that the given orientation at $p$ 
	coincides with the standard orientation of 
	$\triangle$ in $\mathbb{Z}[\omega]$ 
	(resp.\ $\square$ in $\mathbb{Z}[i]$). 
	Note that 
	$\overline{\triangle}(p)$ 
	(resp.\ $\overline{\square}(p)$) 
	has the $2\pi/3$-rotational symmetry 
	(resp.\ the $\pi/2$-rotational symmetry). 
	\item 
	For each $e\in E(X)$ with endpoints $p$ and $q$, 
	we can glue $\overline{\triangle}(p)$ and 
	$\overline{\triangle}(q)$ 
	(resp.\ $\overline{\square}(p)$ and 
	$\overline{\square}(q)$) 
	similarly as in the original definitions as follows:
	\begin{enumerate}[label=(iii-\arabic*)]
	\setlength{\leftskip}{15pt}
		\item 
		$\overline{\triangle}(p)$ 
		(resp.\ $\overline{\square}(p)$) is identified, 
		preserving the orientation, 
		with the graph on 
		$\triangle$ (resp.\ $\square$) 
		so that $e$ is corresponding to the edge 
		$\overline{z,\omega z}$ 
		(resp.\ $\overline{z,(1+i)z}$); 
		\item 
		$\overline{\triangle}(q)$ 
		(resp.\ $\overline{\square}(q)$) is identified, 
		preserving the orientation, 
		with the graph on 
		$\triangle(z,(1+\omega)z,\omega z)$ 
		(resp.\ $\square(z,2z,(2+i)z,iz)$) 
		so that $e$ is corresponding to the edge 
		$\overline{\omega z,z}$ 
		(resp.\ $\overline{(1+i)z,z}$);
		\item 
		then let us glue 
		$\overline{\triangle}(p)$ and 
		$\overline{\triangle}(q)$ 
		(resp.\ $\overline{\square}(p)$ and 
		$\overline{\square}(q)$) 
		by identifying all the vertices and edges 
		overlapping with each other.  
	\end{enumerate}
\end{enumerate}
The obtained (abstract) graph is again a 
$3$-valent (resp.\ $4$-valent) graph, 
which is denoted by $\GC_{k,l}(X)=\GC_z(X)$, 
where $z=k+l\omega$ (resp.\ $z=k+li$). 
\end{Def}
This definition, in general, may be not well-defined according to given orientation at each 
vertex of the graph. However, in case of plane graphs or graphs on an oriented surface, 
we have Proposition \ref{Prop(embedded_on_surface)} below.
\begin{figure}[htbp]
	\centering
	\begin{tabular}{ccc}
		\subfigure[a vertex $p$ and its adjancencies $q_1$, $q_2$, $q_3$]%
		{\includegraphics[height=3.7cm]%
		{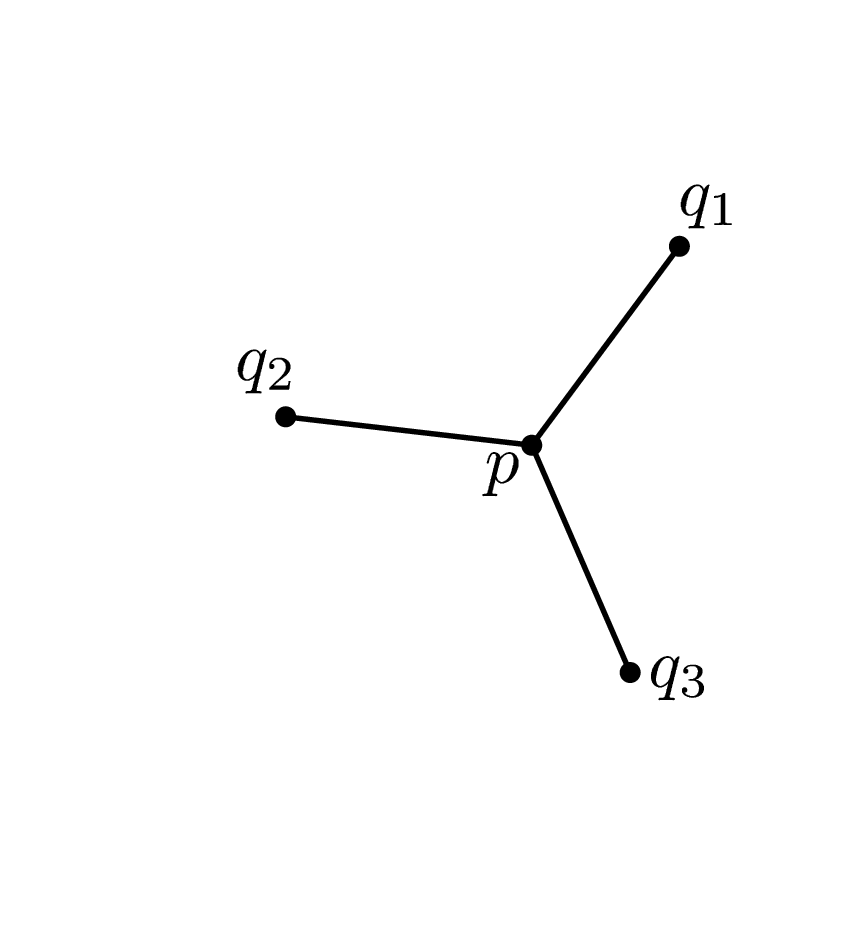}}
		&
		\subfigure[step (\ref{enum:def:1}) $\triangle(0,z,\omega z)$]%
		{\includegraphics[height=3.7cm]%
		{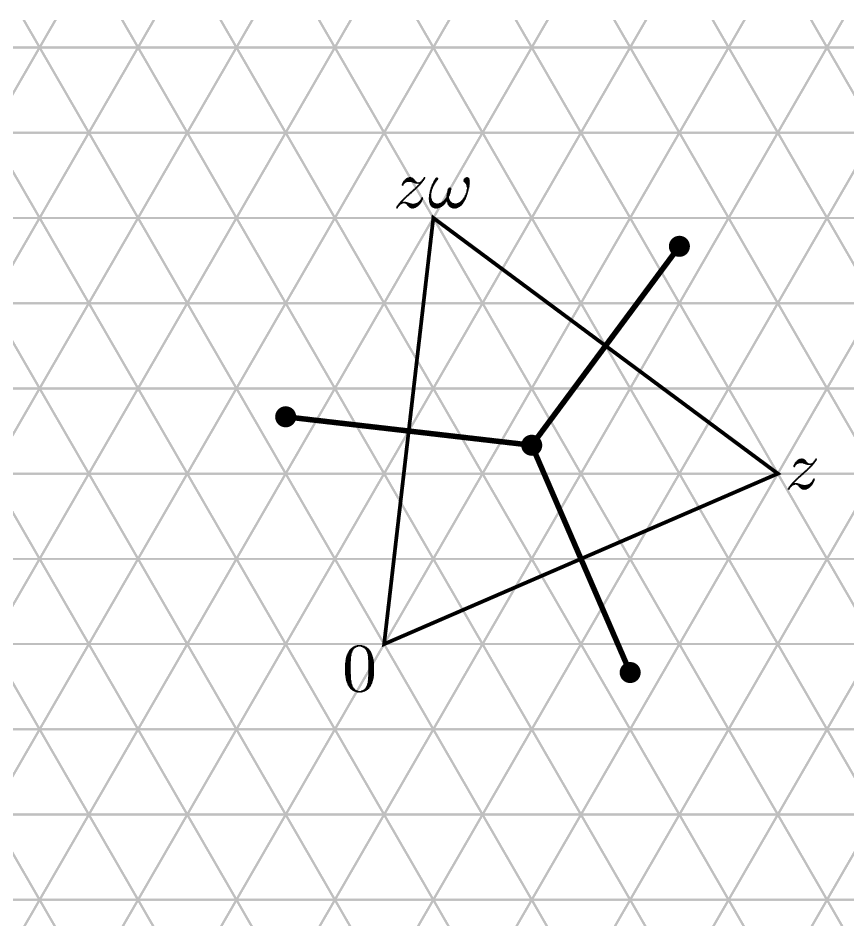}}
		&
		\subfigure[step (\ref{enum:def:2}) $\overline{\triangle}_{k,l}(p)$]%
		{\includegraphics[height=3.7cm]%
		{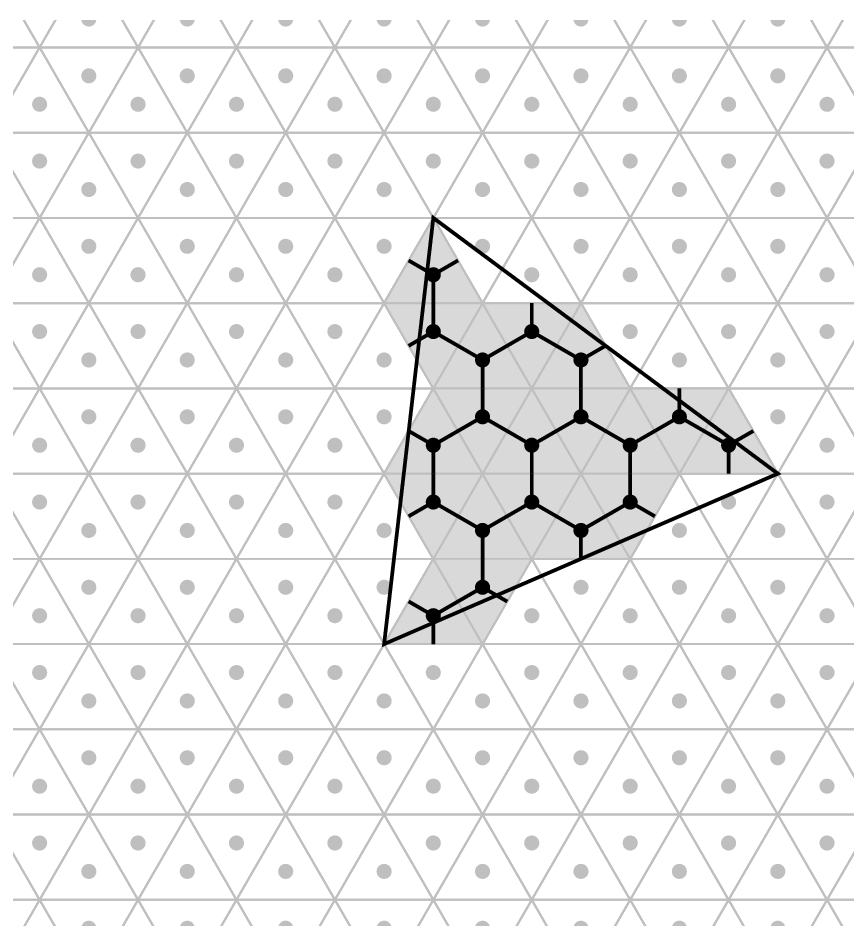}}
                  \\
		\subfigure[$V(\triangle_{k,l}(p))$]%
		{\includegraphics[height=3.7cm]%
		{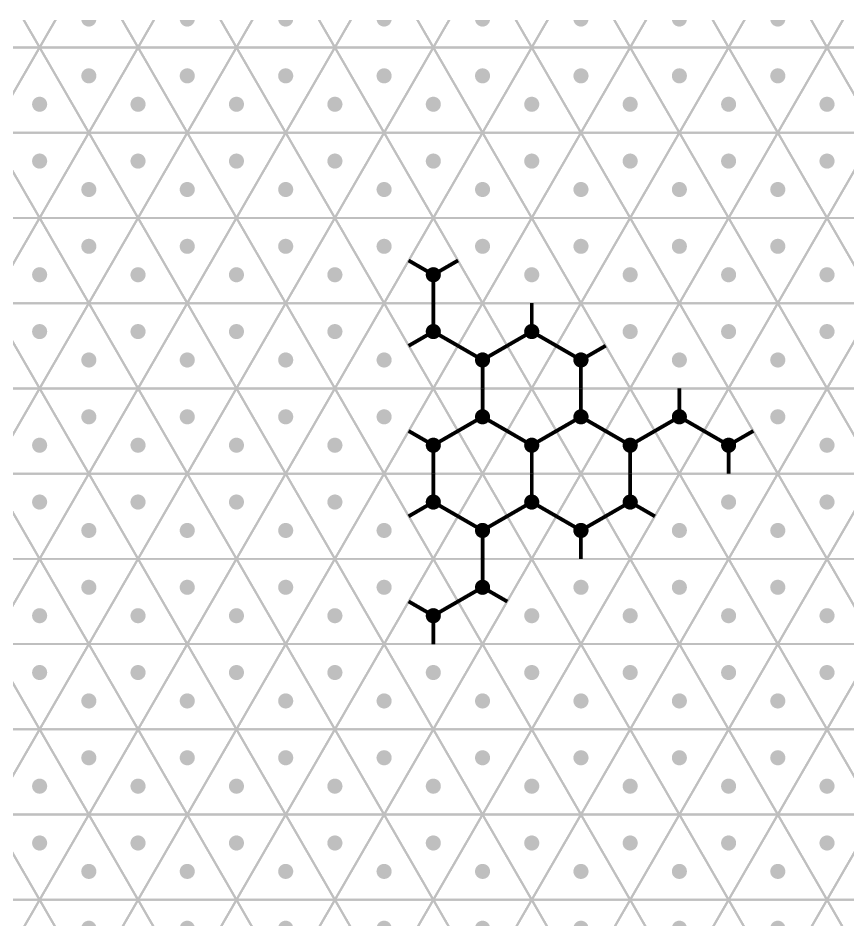}}
		&
		\subfigure[glue $\overline{\triangle}_{k,l}(p)$ and $\overline{\triangle}_{k,l}(q_i)$]%
		{\includegraphics[height=3.7cm]%
		{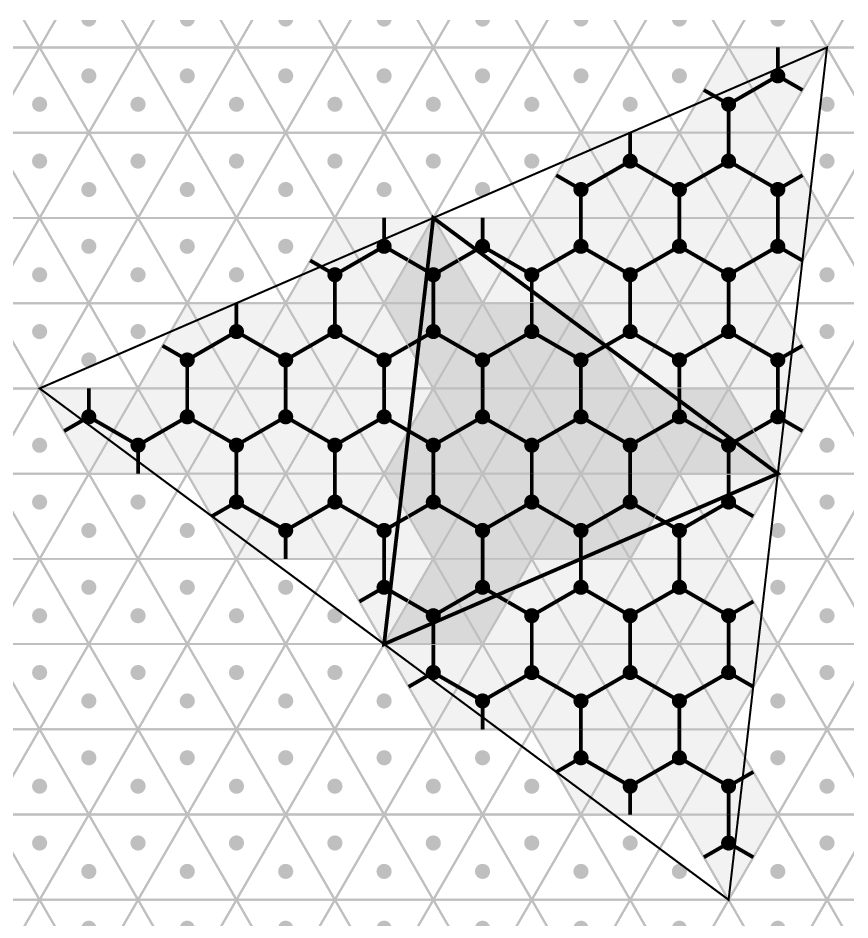}}
		&
		\subfigure[$V(\triangle_{k,l}(p)) \cup V(\triangle_{k,l}(q_i))$]%
		{\includegraphics[height=3.7cm]%
		{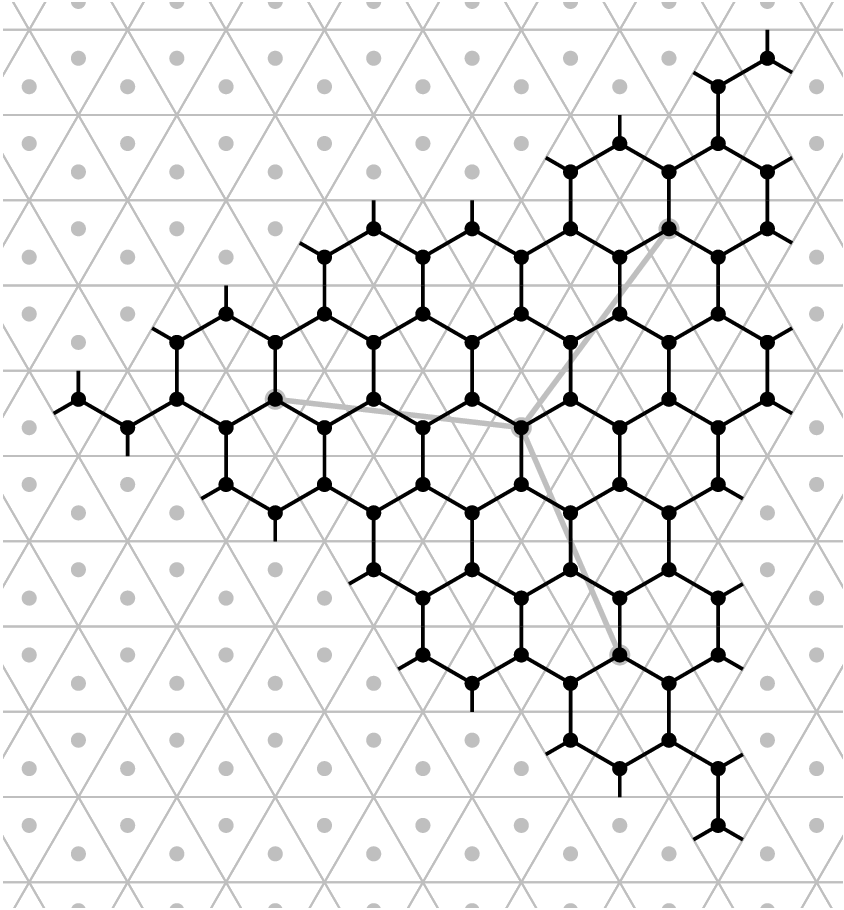}}
	\end{tabular}
	\caption{%
          In case of $(k, l) = (3, 2)$, the Goldberg-Coxeter construction for a 3-valent graph
          around a vertex $p$ of the graph.
          By this procedure, we construct $V(\triangle_{k,l}(p))$
          (see also Figure~\ref{Fig(3-vlnt_clstr)} and Section \ref{sec:cluster}).
	}
	\label{Fig(Definition)}
\end{figure}
\begin{Prop}
\label{Prop(embedded_on_surface)}
Let $X$ be a connected, finite and simple 
$3$- or $4$-valent graph 
which is embedded in an oriented surface $M$ in such a way 
that the closure of each face is simply connected. 
Then for $(k,l)\in \mathbb{Z}^2$, $(k,l)\neq (0,0)$, 
$\GC_{k,l}(X)$ is well-defined and 
is also embedded in $M$. 
\end{Prop}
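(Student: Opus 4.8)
The plan is to show that the edge-correspondence used in Definition~\ref{Def(GC)} can be chosen coherently when $X$ sits in an oriented surface $M$, so that all the gluings in step (iii) are consistent and produce a graph re-embedded in $M$. First I would recall that an embedding of $X$ in an oriented surface $M$ induces, at each vertex $p$, a cyclic ordering of the edges emanating from $p$, namely the order in which they appear as one rotates around $p$ in the positive direction determined by the orientation of $M$. Promoting this cyclic order to a genuine linear order (by an arbitrary choice of a first edge at each vertex) equips $X$ with an orientation at each vertex in the sense of Definition~\ref{Def(GC)}. The point of the proof is that the final graph $\GC_{k,l}(X)$ does \emph{not} depend on these auxiliary choices, because $\overline{\triangle}(p)$ (resp.\ $\overline{\square}(p)$) has $2\pi/3$- (resp.\ $\pi/2$-) rotational symmetry, which is exactly the cyclic redundancy of a $3$- (resp.\ $4$-)valent vertex; this was already noted at the end of step (ii).

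The core step is to verify well-definedness of the gluing. For each edge $e=\{p,q\}$, step (iii) identifies $\overline{\triangle}(p)$ with the standard triangle $\triangle(0,z,\omega z)$ so that $e$ corresponds to the side $\overline{z,\omega z}$, and $\overline{\triangle}(q)$ with the reflected copy $\triangle(z,(1+\omega)z,\omega z)$ so that $e$ corresponds to $\overline{\omega z,z}$, and then glues along that common side. What must be checked is that, as we range over all edges, these partial identifications are mutually compatible at each vertex: the three (resp.\ four) sides of the triangle (resp.\ square) attached to $p$ get matched with the three (resp.\ four) edges at $p$ in an order compatible with the cyclic order coming from $M$. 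This is precisely where the rotational symmetry is used — rotating $\overline{\triangle}(p)$ by $2\pi/3$ cyclically permutes its three boundary arcs and the induced labelling of the small triangles along them, so the construction is insensitive to which edge at $p$ we declared ``first,'' and the orientation-preserving identifications glue up without monodromy. I would phrase this as: the disjoint union $\bigsqcup_{p} \overline{\triangle}(p)$ with the edge-identifications of step (iii) forms a quotient that is independent of all auxiliary orderings, and the resulting abstract graph is $\GC_{k,l}(X)$.

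For the embedding statement, the idea is geometric and essentially local: a small closed disc neighborhood $D_p\subset M$ of each vertex $p$ is divided by the edges at $p$ into $\deg(p)$ sectors, and the orientation of $M$ gives these sectors their cyclic order; we may realize $D_p$ as (a topological copy of) the equilateral triangle $\triangle$ (resp.\ square $\square$) with the regular polygon structure, placing $\overline{\triangle}(p)$ (resp.\ $\overline{\square}(p)$) inside it. Likewise a neighborhood of each edge $e$ of $X$ in $M$ is a band along which the two triangle-halves from its endpoints are glued, matching the ``edge subdivision'' pattern of the two clusters; since the closure of each face is simply connected by hypothesis, the union of these disc-and-band neighborhoods is a regular neighborhood $N$ of $X$ in $M$ and $M\setminus \mathrm{int}(N)$ is a disjoint union of discs (one per face). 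Drawing $\GC_{k,l}(X)$ inside $N$ as just described then exhibits it as embedded in $M$; one checks the faces of the new embedding are again discs, so the closure-simply-connected property is preserved and the induction implicit in iterating the construction goes through.

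The main obstacle I anticipate is making the ``compatibility of gluings at each vertex'' argument fully rigorous rather than merely plausible: one must bookkeep, for a fixed vertex $p$, how the labelling of the small triangles inside $\overline{\triangle}(p)$ near one side relates, under the $2\pi/3$-rotation, to the labelling near the next side, and confirm that the side-pairing prescribed by step~(iii) for the three edges at $p$ is consistent with a single cyclic arrangement — i.e.\ that no ``twist'' is introduced when one goes around $p$. Equivalently, one has to check that the reflection used to attach $\overline{\triangle}(q)$ (the passage from $\triangle(0,z,\omega z)$ to $\triangle(z,(1+\omega)z,\omega z)$) is orientation-reversing on the triangles but orientation-\emph{preserving} on the glued surface, so that the surface orientation is globally consistent; this is the one place where the hypothesis that $M$ is oriented (not merely orientable) is genuinely used. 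Once this local-to-global coherence is established, both the well-definedness and the re-embedding follow without further difficulty.
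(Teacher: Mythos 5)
Your argument is correct and follows essentially the same route as the paper's (much terser) proof: the surface orientation induces the vertex orientations, and the subdivision and gluing are carried out locally in $M$ --- the paper tiles $M$ by the faces of the dual graph (one simply connected triangle or rectangle per vertex of $X$), whereas you use a regular neighborhood of $X$, but this is only a cosmetic difference. One small correction to your anticipated obstacle: the map taking $\triangle(0,z,\omega z)$ to $\triangle(z,(1+\omega)z,\omega z)$ is the $\pi$-rotation about the midpoint of the shared side $\overline{z,\omega z}$, which is orientation-preserving on the plane, so no orientation-reversing ``reflection'' ever enters and the coherence of the surface orientation under the gluings of step (iii) is immediate.
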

\begin{proof}
The oriented tangent plane to $M$ at $p\in V(X)$ 
defines the orientation at $p$, 
and $\GC_{k,l}(X)$ is defined. 
The notion of faces is also well-defined. 
Since each face of $X$ is simply connected, 
we can take a dual graph $D_X$ of $X$ in $M$, 
all of whose faces are simply connected triangles 
(resp.\ rectangles) for the $3$-valent case 
(resp.\ $4$-valent case). 
The dividing step (ii) and 
the gluing step (iii) 
in Definition~\ref{Def(GC)} 
are well done in $M$ via 
respective appropriate local charts. 
\end{proof}
A Goldberg-Coxeter construction $\GC_{k,l}(X)$ for 
$3$-valent (resp.\ $4$-valent) graph $X$ 
inserts some hexagons (resp.\ squares), 
according to its parameter $k$ and $l$, 
between each pair of original faces of $X$. 
The most famous example is a fullerene $C_{60}$, 
called also a \emph{buckminsterfullerene} or 
a \emph{buckyball}, 
which is nothing but $\GC_{1,1}(\text{Dodecahedron})$. 
This construction owes its name to 
the pioneering work \cite{1937104} due to M.\ Goldberg, 
where 
a so-called Goldberg polyhedron 
(a convex polyhedron 
whose $1$-skeleton is a $3$-valent graph, consisting of 
hexagons and pentagons with rotational icosahedral symmetry
$3$-valent graph as its $1$-skeleton) 
is studied and is proved to be of the form 
$\GC_{k,l}(\text{Dodecahedron})$ for some $k$ and $l$. 
A Goldberg-Coxeter construction for 
$3$- or $4$-valent plane graphs occurs 
in many other context; 
see \cite{MR2429120} and the references therein. 
Several examples of Goldberg-Coxeter constructions 
for nonplanar $3$-valent 
(infinite or finite quotient) graphs, 
such as for carbon nanotubes and Mackay-like crystals, 
are provided in \cite{MR3724208}. 
\par
The following proposition summarizes few fundamental properties of 
Goldberg-Coxeter constructions. 
\begin{Prop}[Deza-Dutour~\cite{MR2429120,MR2035314}]
\label{Prop(properties_of_GC)}
Let $X=(V(X),E(X))$ be a $3$-valent 
{\upshape(}resp.\ $4$-valent{\upshape)} 
graph equipped with an orientation at each vertex. 
Then the following hold. 
\begin{enumerate}
	\item \label{Item(GC_zz'=GC_z(GC_z'))}
	If $X$ is embedded in an oriented surface 
	in such a way that 
	the closure of each face is simply connected, 
	and the orientation at each vertex coincides 
	with the one of the surface, then 
	$\GC_z(\GC_{z'}(X))=\GC_{zz'}(X)$, 
	for any $z,z'\in \mathbb{Z}[\omega]$ 
	{\upshape(}resp.\ $z,z'\in \mathbb{Z}[i]${\upshape)}. 
	\item \label{Item(isom_class_for_GC)}
	For any $(k,l)\in \mathbb{Z}^2$, $(k,l)\neq (0,0)$, 
	we have the following graph isomorphisms:
	\begin{gather*}
		\GC_{k,l}(X)
		\simeq \GC_{-l,k+l}(X)
		\simeq \GC_{-k-l,k}(X)
		\simeq \GC_{-k,-l}(X)
		\simeq \GC_{l,-k-l}(X)
		\simeq \GC_{k+l,-k}(X),
		\\
		\GC_{k,l}(X)
		\simeq \GC_{l,k}(X).
	\end{gather*}
	So $\{\GC_{k,l}(X)\mid k\geq l\geq 0,\ k\neq 0\}$ 
	gives a system of representatives of 
	graph isomorphism classes. 
	\item \label{Item(num_of_V(GC))}
	The number of vertices of\/ $\GC_z(X)$ is given as 
	$\card{V(\GC_z(X))}=\lvert z\rvert^2\card{V(X)}
	=(k^2+kl+l^2)\card{V(X)}$, 
	where $z=k+l\omega$ 
	{\upshape(}resp.\ 
	$\card{V(\GC_z(X))}=\lvert z\rvert^2\card{V(X)}
	=(k^2+l^2)\card{V(X)}$, 
	where $z=k+li${\upshape)}. 
\end{enumerate}
\end{Prop}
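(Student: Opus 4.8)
The plan is to treat the three assertions separately, in the order (3), (2), (1). For (3), the first point is that, by the construction in Definition~\ref{Def(GC)}, the vertex set of $\GC_z(X)$ is obtained from the disjoint union $\bigsqcup_{p\in V(X)}V(\overline{\triangle}_{k,l}(p))$ (resp.\ $\bigsqcup_{p}V(\overline{\square}_{k,l}(p))$) by carrying out, along each edge $e=\{p,q\}$, the identification of those barycenters that the gluing segment forces to coincide; assigning each such shared barycenter to a single endpoint, one rewrites $V(\GC_z(X))=\bigsqcup_{p\in V(X)}V(\triangle_{k,l}(p))$ as an honest disjoint union, so $\card{V(\GC_z(X))}=\card{V(X)}\cdot\card{V(\triangle_{k,l}(p))}$ and it remains to show $\card{V(\triangle_{k,l}(p))}=k^2+kl+l^2$ (resp.\ $k^2+l^2$). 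Here $\{z,\omega z\}$ is a $\mathbb{Z}$-basis of the ideal $z\mathbb{Z}[\omega]$, which has index $\card{z}^2=k^2+kl+l^2$ in $\mathbb{Z}[\omega]$, and $\triangle(0,z,\omega z)$ is one half of a fundamental parallelogram for $z\mathbb{Z}[\omega]$, hence of Euclidean area equal to that of $\card{z}^2$ unit triangles. Counting the small triangles meeting $\operatorname{int}\triangle(0,z,\omega z)$ with the half-open convention inherent in the gluing step --- each unit triangle crossed by a side of $\triangle$ being shared with the face on the other side of that side --- the excesses and deficits along the three sides cancel and the count is $\operatorname{Area}(\triangle(0,z,\omega z))/\operatorname{Area}(\text{unit triangle})=k^2+kl+l^2$; the $4$-valent case is identical with $\mathbb{Z}[i]$, the ideal $z\mathbb{Z}[i]$ of index $k^2+l^2$, and unit squares.

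For (2), recall $\mathbb{Z}[\omega]^{\times}=\{\omega^{j}:0\le j\le5\}$ (resp.\ $\mathbb{Z}[i]^{\times}=\{\pm1,\pm i\}$); multiplication by a unit $u$ is a Euclidean rotation preserving the lattice, so it carries the whole configuration defining $\GC_z(X)$ --- the big triangle, its sub-triangles, and the orientation data --- isometrically onto the one defining $\GC_{uz}(X)$, whence $\GC_{uz}(X)\simeq\GC_z(X)$. Writing $z=k+l\omega$ and computing $\omega z=-l+(k+l)\omega$, $\omega^{2}z=-(k+l)+k\omega$, together with their negatives, yields the six displayed isomorphisms (the Gaussian case, with $iz=-l+ki$, is analogous). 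The remaining isomorphism $\GC_{k,l}(X)\simeq\GC_{l,k}(X)$ comes from the conjugation $z\mapsto\bar z$, a reflection which also preserves the lattice but reverses orientation: reflecting the whole defining configuration replaces each gadget $\overline{\triangle}_{k,l}(\cdot)$ by its mirror image, which is a copy of $\overline{\triangle}_{l,k}(\cdot)$, and preserves all incidences and gluing data, so that, as abstract graphs (where isomorphism ignores embedding and orientation), the result is $\GC_{l,k}(X)$. Finally, the elementary fact that the group generated by multiplication by $\omega$ and by conjugation --- a dihedral group of order $12$, of order $8$ in the Gaussian case --- has $\{k+l\omega:k\ge l\ge0\}$ as a fundamental domain shows that every $\GC_{k',l'}(X)$ is isomorphic to one in the list, while distinct members of the list are not related by these symmetries, so the list is a system of representatives as claimed.

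For (1), we use the surface hypothesis. Let $D_X$ be the dual triangulation (resp.\ quadrangulation) of $X$ in $M$, all of whose faces are simply connected; for an Eisenstein (resp.\ Gaussian) integer $w$, let $T_w$ denote the subdivision of each face of such a cellulation by the $w$-template of Definition~\ref{Def(GC)}. Then $\GC_w(X)$ is by construction the dual of $T_w(D_X)$, and conversely $D_{\GC_w(X)}=T_w(D_X)$ since double dualization is the identity for these polygonal cellulations of the closed surface $M$. By Proposition~\ref{Prop(embedded_on_surface)}, $\GC_{z'}(X)$ is embedded in $M$ with simply connected faces and carries the vertex-orientations induced by $M$, so $\GC_z(\GC_{z'}(X))$ is defined and equals the dual of $T_z(D_{\GC_{z'}(X)})=T_z(T_{z'}(D_X))$; it remains to verify $T_z\circ T_{z'}=T_{zz'}$. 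Fixing one face and identifying it orientation-preservingly with $\triangle(0,1,\omega)$ (resp.\ the unit square), the $w$-template becomes exactly the grid $\tfrac1w\mathbb{Z}[\omega]$ (resp.\ $\tfrac1w\mathbb{Z}[i]$) cut off at the face; subdividing each cell of the $\tfrac1{z'}$-grid by the $z$-template superimposes on that cell a translate of $\tfrac1{z'}\cdot\tfrac1z\mathbb{Z}[\omega]=\tfrac1{zz'}\mathbb{Z}[\omega]$, and since $z\in\mathbb{Z}[\omega]$ gives $\tfrac1{z'}\mathbb{Z}[\omega]\subset\tfrac1{zz'}\mathbb{Z}[\omega]$, the union of these translates is precisely the $\tfrac1{zz'}$-grid, i.e.\ the $zz'$-template. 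Dualizing gives $\GC_z(\GC_{z'}(X))=\GC_{zz'}(X)$.

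The main obstacle I anticipate is in (1): rigorously identifying the intrinsic dual cellulation of $\GC_{z'}(X)$, together with its $M$-induced vertex orientations, with the refinement $T_{z'}(D_X)$, and then tracking the iterated subdivision near the edges of $D_X$ --- the unit cells straddling those edges, shared between two faces --- along with all the orientations; the conceptual content, namely that $\mathbb{Z}[\omega]$ and $\mathbb{Z}[i]$ are closed under multiplication (so that $z\cdot(z'\mathbb{Z}[\omega])=zz'\mathbb{Z}[\omega]$) and that complex-linear similarities preserve orientation, is easy. In (3) the only delicate step is the boundary cancellation in the triangle/square count, and in (2) it is checking that the mirror image of $\overline{\triangle}_{k,l}(\cdot)$ really is a copy of $\overline{\triangle}_{l,k}(\cdot)$ with matching gluing data.
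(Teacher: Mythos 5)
The paper does not actually prove this proposition: it is quoted from Deza--Dutour, with only the remark that the relevant lattice-theoretic arguments carry over from plane graphs to oriented surfaces. So your write-up is being measured against the standard argument rather than against a proof in the text. Your treatment of (3) --- the decomposition of $V(\GC_z(X))$ into clusters together with the index/area count $[\mathbb{Z}[\omega]:z\mathbb{Z}[\omega]]=\card{z}^2$, the boundary triangles being split evenly by the half-turn about each edge midpoint --- is exactly the computation the paper later carries out in Subsection~\ref{Sec(cluster)} via the $(k,l)$-clusters and Lemma~\ref{Lem(through_barycenter)}. Your treatment of (1), composing the subdivision templates on the dual cellulation and using $\tfrac1{z'}\mathbb{Z}[\omega]\subset\tfrac1{zz'}\mathbb{Z}[\omega]$, is the intended ``property of triangular lattices'', and the six rotational isomorphisms in (2) obtained from the units of $\mathbb{Z}[\omega]$ (resp.\ $\mathbb{Z}[i]$) are fine.

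The one step that does not go through as written is $\GC_{k,l}(X)\simeq\GC_{l,k}(X)$. Reflecting the entire defining configuration does carry each gadget $\overline{\triangle}_{k,l}(p)$ onto a copy of $\overline{\triangle}_{l,k}(p)$, but it simultaneously reverses the cyclic order of the sides of each big triangle (resp.\ square), hence the vertex orientations entering the gluing step. What the mirror argument actually produces is a canonical identification of $\GC_{k,l}(X,\mathcal{O})$ with $\GC_{l,k}(X,\overline{\mathcal{O}})$, where $\overline{\mathcal{O}}$ denotes the globally reversed orientation data; identifying \emph{that} with $\GC_{l,k}(X,\mathcal{O})$ is precisely the content of the claim, and your parenthetical ``isomorphism ignores orientation'' hides it. For an $X$ admitting no orientation-reversing automorphism this step requires a genuine argument --- applying the reflection a second time merely returns you to $\GC_{k,l}(X,\mathcal{O})$. (There is also a small computational slip: $\overline{k+l\omega}=(k+l)-l\omega$, not $l+k\omega$; the reflection realizing $(k,l)\mapsto(l,k)$ is $\zeta\mapsto\omega\bar\zeta$.) Separately, the last sentence of your (2) overreaches: showing that distinct parameters with $k\geq l\geq 0$, $k\neq 0$ are inequivalent under the dihedral symmetries of the lattice does not show that they yield non-isomorphic graphs (note that $(7,0)$ and $(5,3)$ already give equal vertex counts); the paper only ever uses the weaker fact that every $\GC_{k,l}(X)$ is isomorphic to one with $k\geq l\geq 0$.
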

Deza-Dutour (\cite{MR2429120,MR2035314}) mentioned these properties 
only for plane graphs. 
However, these come from properties of triangular lattices and hence these also 
hold for graphs on oriented surfaces. 
In consideration of 
Proposition~\ref{Prop(properties_of_GC)} 
(\ref{Item(isom_class_for_GC)}), 
in the rest of this paper, 
we assume that $k$ is a positive integer 
and $l$ is a nonnegative integer satisfying 
$k\geq l\geq 0$ and $k\neq 0$. 
\subsection{Clusters for Goldberg-Coxeter constructions}
\label{Sec(cluster)}
A \emph{cluster} is the central notion in this paper. 
Its definitions shall be given below 
in two different cases: 
where $X$ is $3$-valent 
and where $X$ is $4$-valent. 
\subsubsection{The case where $X$ is $3$-valent} \label{sec:cluster}
For each $p\in V(X)$, 
let us construct a subgraph 
$\triangle_{k,l}(p)
=(V(\triangle_{k,l}(p)),E(\triangle_{k,l}(p)))$ 
of $\overline{\triangle}_{k,l}(p)\subseteq \GC_{k,l}(X)$, 
called the \emph{$(k,l)$-cluster}, 
so as to have $k^2+kl+l^2$ vertices 
and the $2\pi/3$-rotational symmetry 
of $\overline{\triangle}_{k,l}(p)$. 
For this, we just have to define 
$V(\triangle_{k,l}(p))$ 
by the set of vertices 
$x$ of $\overline{\triangle}_{k,l}(p)$ 
(considered as the graph on 
$\triangle\subseteq \mathbb{Z}[\omega]$) 
satisfying one of the following conditions:
\begin{enumerate}[label=(\roman*),ref=\roman*]
	\item \label{Item(3-clstr_intrr)}
	$x\in \overline{\triangle}_{k,l}(p)$ 
	corresponds to a triangle in $\mathbb{Z}[\omega]$ 
	whose barycenter lies in the interior of 
	$\triangle=\triangle(0,z,\omega z)$, 
	where $z=k+l\omega$;
	\item \label{Item(3-clstr_bdry)}
	$x\in \overline{\triangle}_{k,l}(p)$ 
	corresponds to an upward triangle in 
	$\mathbb{Z}[\omega]$ whose barycenter lies on 
	an edge of $\triangle$. 
\end{enumerate}
Here we mean an \emph{upward triangle} $\triangle(a,b)$ 
by the triangle in $\mathbb{Z}[\omega]$ with vertices 
$a+b\omega$, $a+1+b\omega$ and $a+(b+1)\omega$ 
for $a,b\in \mathbb{Z}$ 
(see Figure~\ref{Fig(3-vlnt_clstr)}). 
We also denote by $\dtriangle(a,b)$, 
called \emph{downward triangle}, the triangle with vertices 
$a+b\omega$, $a+(b+1)\omega$ and $a-1+(b+1)\omega$. 
\par
In the case that $l=0$, 
$\triangle_{k,0}(p)$ is nothing but 
$\overline{\triangle}_{k,0}(p)$ itself, 
has $k^2$ vertices and 
has the dihedral symmetry $D_3$ (of order $6$) 
(see Figure~\ref{Fig(3-vlnt_clstr)}). 
\par
In the case that $k=l>0$, 
it is easily seen that there are $3(k^2-k)$ 
vertices satisfying (\ref{Item(3-clstr_intrr)}) and 
$3k$ vertices satisfying (\ref{Item(3-clstr_bdry)}). 
The obtained subgraph 
$\triangle_{k,k}(p)$ 
has $3k^2$ vertices and 
has the $2\pi/3$-rotational symmetry 
because upward triangles are mapped to upward triangles 
by the rotation 
(see Figure~\ref{Fig(3-vlnt_clstr)}). 
\par
The following lemma makes clear 
the cases where there is a barycenter lying on an edge of 
$\triangle$ among the remaining cases. 
\begin{Lem}
\label{Lem(through_barycenter)}
Let $k>l>0$, $m:=\gcd(k,l)$, $k_1:=k/m$ and $l_1:=l/m$. 
An edge of the triangle 
$\triangle=\triangle(0,z,\omega z)$, where $z=k+l\omega$ 
in $\mathbb{Z}[\omega]$ 
passes through a barycenter of 
a small triangle in $\mathbb{Z}[\omega]$ 
if and only if 
\begin{equation}
	k_1\not\equiv 0\pmod 3,
	\qquad
	k_1\equiv l_1\pmod 3.
\label{Eq(k-l=0_mod3)}
\end{equation}
Moreover, in the case above, each edge of\/ $\triangle$ 
passes through exactly $2m=2\gcd(k,l)$ barycenters. 
Among these $2m$ vertices, 
exactly $m$ vertices corresponding to upward triangles 
have just two adjacent triangles 
with barycenters lying in $\triangle$. 
The combined $3m$ vertices 
on the three edges of\/ $\triangle$ 
are located in symmetric position with 
the rotation by $2\pi/3$ of\/ $\triangle$. 
\end{Lem}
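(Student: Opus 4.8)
The plan is to work with explicit coordinates in $\mathbb{Z}[\omega]$ and to reduce the whole question to a single edge of $\triangle$ by symmetry. First I would record that the upward triangle $\triangle(a,b)$ has barycenter $a+\frac13+(b+\frac13)\omega$ and the downward triangle $\dtriangle(a,b)$ has barycenter $a-\frac13+(b+\frac23)\omega$. Hence a point $w\in\mathbb{C}$ is the barycenter of a small triangle if and only if $3w\in\mathbb{Z}[\omega]$ and $3w\equiv\pm(1+\omega)\pmod{3\mathbb{Z}[\omega]}$, the sign $+$ (resp.\ $-$) corresponding to an upward (resp.\ downward) triangle. Next, the affine map $R(w)=\omega^2w+z$, where $\omega^2=\omega-1$ and $z=k+l\omega$, is the rotation by $2\pi/3$ about the barycenter $(1+\omega)z/3$ of $\triangle=\triangle(0,z,\omega z)$: it sends $0\mapsto z\mapsto\omega z\mapsto 0$, preserves $\mathbb{Z}[\omega]$ (its linear part $\omega^2$ is a unit there), and, being an orientation-preserving isometry of the triangular tessellation, carries upward triangles to upward ones and downward to downward, hence barycenters to barycenters of the same type. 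It therefore suffices to analyze the edge $\overline{0,z}$; transporting the conclusion by $R$ to the other two edges also yields the asserted $2\pi/3$-symmetry of the combined boundary vertices at the very end.

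Now I would parametrize $\overline{0,z}$ as $tz$, $t\in[0,1]$, and write $k=mk_1$, $l=ml_1$ with $\gcd(k_1,l_1)=1$. Since $3tz=3tm(k_1+l_1\omega)$ and $\gcd(k_1,l_1)=1$, the condition $3tz\in\mathbb{Z}[\omega]$ is equivalent to $3tm\in\mathbb{Z}$, i.e.\ $t=j/(3m)$ with $j\in\mathbb{Z}$; and then $3tz=j(k_1+l_1\omega)$, so the barycenter condition becomes $jk_1\equiv jl_1\equiv\varepsilon\pmod 3$ for a common $\varepsilon\in\{1,2\}$. If $3\mid j$ this is impossible (and in fact $tz$ is then a lattice vertex); if $3\nmid j$ it holds if and only if $k_1\not\equiv0\pmod3$ and $k_1\equiv l_1\pmod3$, which is \eqref{Eq(k-l=0_mod3)}. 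Since $j=1$ is always admissible and $j=0,3m$ give the lattice points $0,z$, this proves the equivalence with \eqref{Eq(k-l=0_mod3)} and, when \eqref{Eq(k-l=0_mod3)} holds, identifies the barycenters on $\overline{0,z}$ with the parameters $j\in\{1,\dots,3m-1\}$, $3\nmid j$; there are $2m$ of these. Moreover $3tz\equiv1+\omega$ (upward) exactly when $jk_1\equiv1\pmod3$, i.e.\ $j\equiv k_1\pmod3$ (using $k_1^2\equiv1$), and precisely $m$ of the admissible $j$ satisfy this, so each edge carries exactly $m$ upward and $m$ downward barycenters.

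For the remaining assertion I would use that the downward triangle neighboring an upward triangle $T$ across the edge of $T$ opposite a vertex $P$ has barycenter $2b_T-P$, the point reflection of $P$ in the barycenter $b_T$ of $T$. Fix an upward boundary barycenter; it is the barycenter of $T=\triangle(s_1,s_2)$ with $s_1=(jk_1-1)/3$, $s_2=(jl_1-1)/3$. The line through $0$ and $z$ is $\{u+v\omega:vk_1=ul_1\}$, and its open half-plane containing $\omega z$ is $\{vk_1>ul_1\}$. Substituting $s_1,s_2$ and using $k_1>l_1$ (this is exactly where the hypothesis $k>l$ is used) shows that the vertices $s_1+s_2\omega$ and $(s_1+1)+s_2\omega$ of $T$ lie strictly in the exterior half-plane while $s_1+(s_2+1)\omega$ lies strictly in the interior half-plane. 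Consequently the neighbor of $T$ across the edge opposite $s_1+(s_2+1)\omega$ has all three of its own vertices strictly in the exterior half-plane, so this small triangle is disjoint from the interior of $\triangle$ and is not a vertex of $\overline{\triangle}_{k,l}(p)$; each of the other two neighbors has a vertex strictly in each half-plane and (modulo the corner check below) meets the interior of $\triangle$. Thus such a boundary vertex is adjacent in $\overline{\triangle}_{k,l}(p)$ to exactly two small triangles whose barycenters lie in $\triangle$.

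The main obstacle is the interaction of the last step with the corners of $\triangle$: the half-plane computation is clean, but one still has to check that the two ``straddling'' neighbors actually meet the open triangular region $\triangle$, not merely the open half-plane of the side $\overline{0,z}$. Since each such neighbor lies within a bounded distance of $b_T$, this can fail in appearance only for the boundedly many boundary barycenters lying within a fixed distance of a corner $0$ or $z$, and those are disposed of by inspecting the tessellation near the corner directly. One should also record that the argument collapses precisely when $k=l$ — then $k_1=l_1=1$ and the vertex $s_1+s_2\omega$ falls on the line itself — which is exactly why that case is excluded here and treated separately.
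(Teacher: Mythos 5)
The paper states Lemma~\ref{Lem(through_barycenter)} without proof, so there is nothing to compare against; judged on its own, your argument is correct and complete in all essentials. The characterization of barycenters as the points $w$ with $3w\equiv\pm(1+\omega)\pmod{3\mathbb{Z}[\omega]}$, the reduction to the edge $\overline{0,z}$ via $R(w)=\omega^2w+z$, the parametrization $t=j/(3m)$ using $\gcd(k_1,l_1)=1$, and the counts $2m$ and $m$ are all right, as is the half-plane analysis showing that exactly one neighbor of an upward boundary triangle lies entirely on the exterior side of the line $0z$. Two loose ends are worth tightening. First, the ``corner check'' you defer is not really a case-by-case inspection of the tessellation (the local picture near a corner varies with $(k_1,l_1)$); but it closes uniformly by the same linear-form computation you already use: writing the line through $0$ and $\omega z$ as $u(k_1+l_1)+vl_1=0$ and the line through $z$ and $\omega z$ as $uk_1+v(k_1+l_1)=m(k_1^2+k_1l_1+l_1^2)$, the barycenters of the two straddling neighbors evaluate to $\tfrac13\bigl(j(k_1^2+k_1l_1+l_1^2)\mp(2k_1+l_1)\bigr)$ and the like, and positivity (resp.\ the upper bound) for all $1\le j\le 3m-1$ reduces to $k_1^2+k_1l_1+l_1^2>2k_1+l_1$, which holds since $k_1\ge 2$. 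So both straddling neighbors have barycenters strictly inside $\triangle$, with no exceptional corner cases. Second, to justify the word ``exactly'' in ``exactly $m$ vertices \dots\ have just two adjacent triangles with barycenters lying in $\triangle$,'' one should also note that the $m$ downward boundary triangles fail the property: the same reflection formula $2b-P$ shows a downward boundary triangle has two vertices on the interior side, hence two of its neighbors have barycenters $2b-P$ strictly exterior and only one neighbor has its barycenter in $\triangle$. Your final remark about why $k=l$ is excluded is also correct.
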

Lemma~\ref{Lem(through_barycenter)} shows that 
the subgraph $\triangle_{k,l}(p)$ 
has $(k-l)^2+3kl=k^2+kl+l^2$ 
vertices and also has the $2\pi/3$-rotational symmetry 
in the remaining case that $k>l>0$. 
\begin{figure}[htbp]
	\centering
	\begin{tabular}{ccc}
		\subfigure[$(k,l)=(5,0)$]%
		{\includegraphics[height=3.7cm]%
		{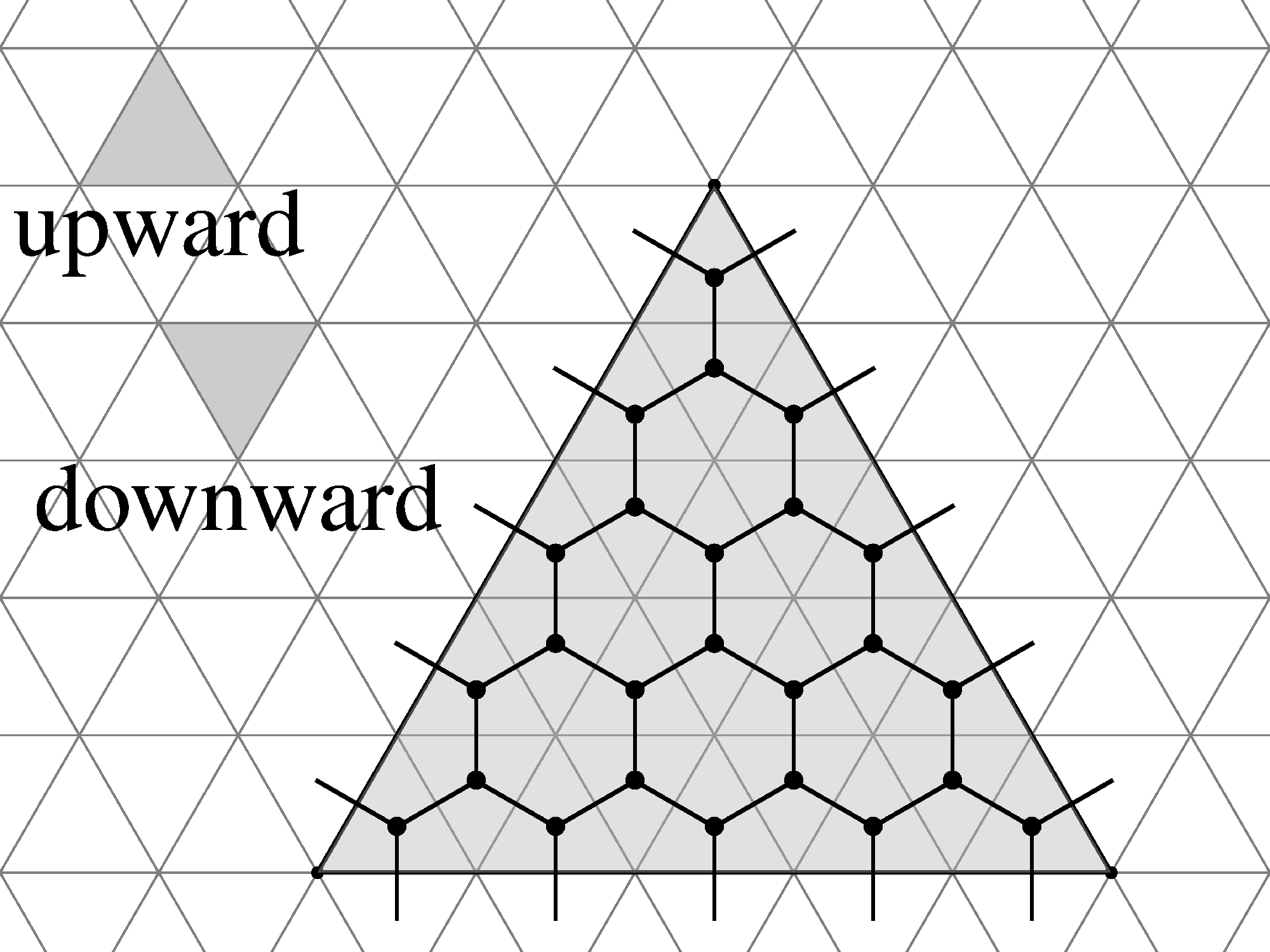}}
		&
		\subfigure[$(k,l)=(3,3)$]%
		{\includegraphics[height=3.7cm]%
		{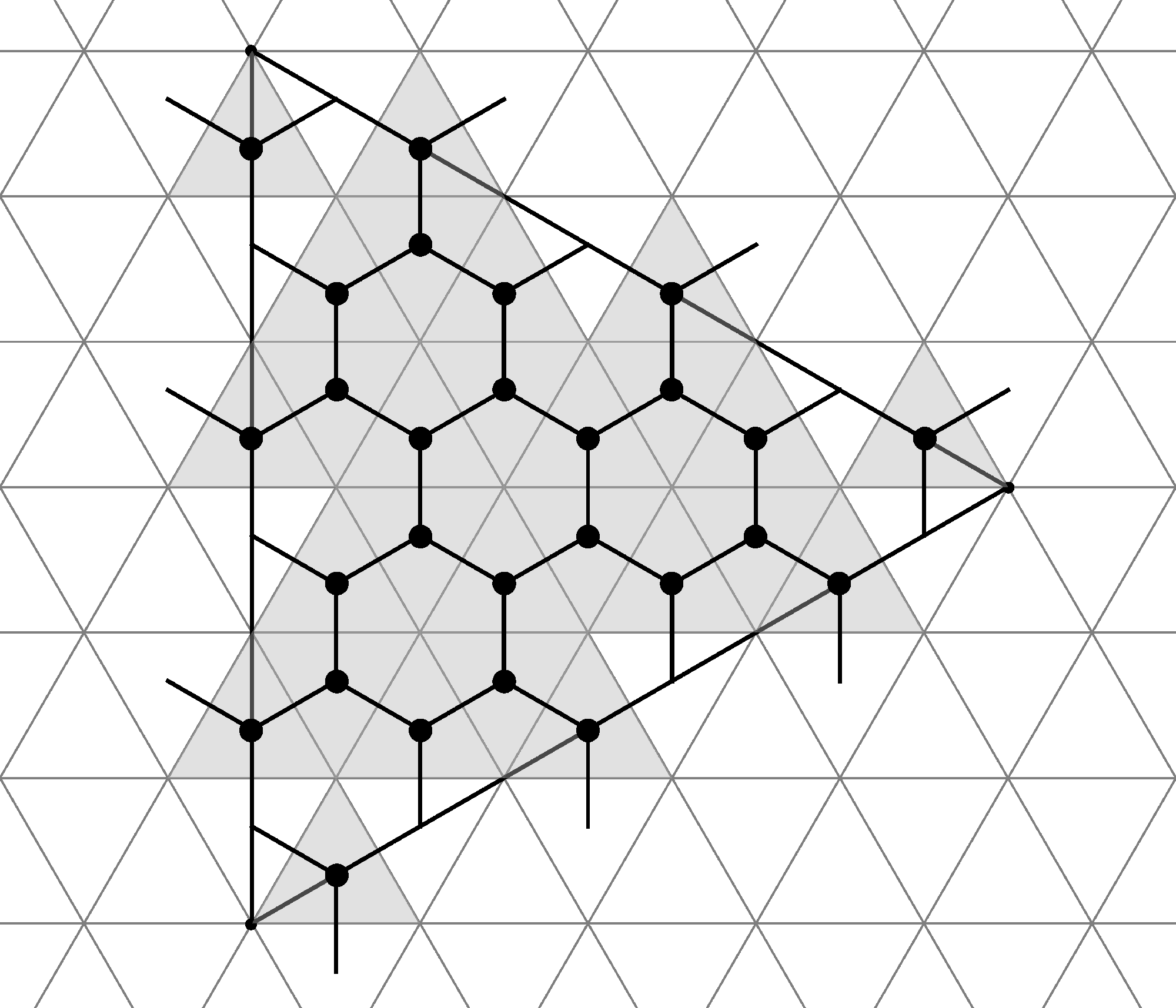}}
		&
		\subfigure[$(k,l)=(4,1)$]%
		{\includegraphics[height=3.7cm]%
		{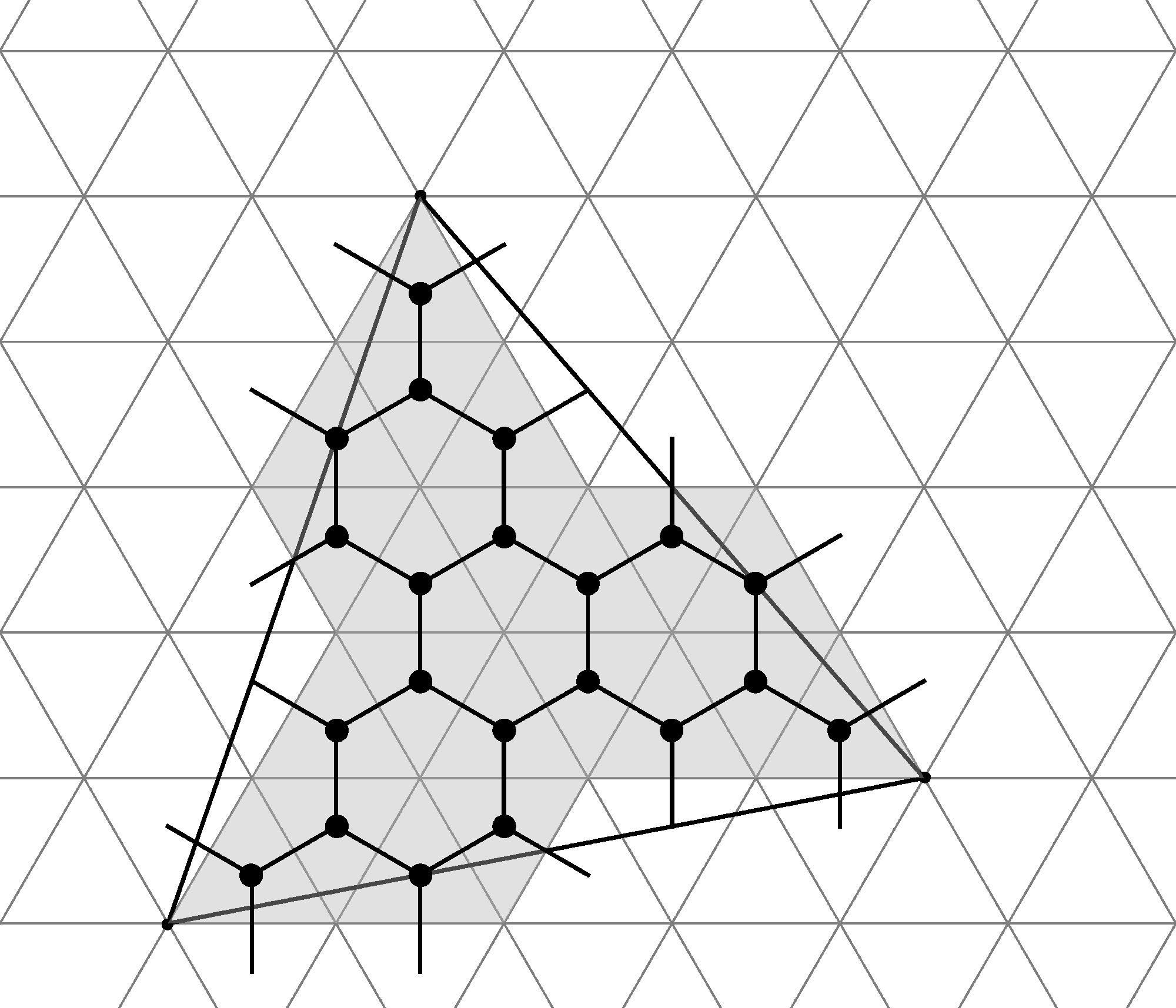}}
	\end{tabular}
	\caption{%
		$3$-valent $(k,l)$-clusters; 
		$V(\triangle_{k,l}(p))$ 
		consists of the barycenters of 
		the gray triangles. 
	}
	\label{Fig(3-vlnt_clstr)}
\end{figure}
\par
Here we can prove the following proposition, 
which guarantees that 
the bipartiteness is kept 
after a Goldberg-Coxeter construction. 
\begin{Prop}
\label{Prop(GC_bipartite)}
Let $X$ be a $3$-valent bipartite graph 
equipped with an orientation at each vertex. 
Then for any $(k,l)\in \mathbb{Z}^2$, $(k,l)\neq (0,0)$, 
$\GC_{k,l}(X)$ is also bipartite. 
So the spectrum of\/ $\GC_{k,l}(X)$ is symmetric 
with respect to $3$. 
\end{Prop}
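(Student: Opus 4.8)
The plan is to exhibit an explicit $2$-coloring of $V(\GC_{k,l}(X))$. Since $X$ is bipartite, fix a partition $V(X) = A \sqcup B$ so that every edge joins $A$ to $B$. The Goldberg-Coxeter construction replaces each vertex $p$ by the cluster graph $\overline{\triangle}_{k,l}(p)$ sitting on the triangle $\triangle \subseteq \mathbb{Z}[\omega]$, whose vertices are barycenters of small triangles, and these barycenters are adjacent precisely when the corresponding small triangles share an edge. The key observation is that the small triangles of $\mathbb{Z}[\omega]$ are themselves $2$-colorable in the canonical way — upward triangles versus downward triangles — and two small triangles sharing an edge always have opposite types. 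So on each cluster $\overline{\triangle}_{k,l}(p)$ the coloring ``upward $\mapsto 0$, downward $\mapsto 1$'' is a proper $2$-coloring; moreover this coloring is invariant under the $2\pi/3$-rotational symmetry of the cluster, since that rotation sends upward triangles to upward triangles (as already noted in the text, e.g.\ in the discussion of the $k=l$ case).

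First I would set up the coloring globally: for a barycenter $x$ lying in the cluster associated to $p$, declare its color to be $\varepsilon(x) := t(x)$ if $p \in A$ and $\varepsilon(x) := 1 - t(x)$ if $p \in B$, where $t(x) \in \{0,1\}$ records whether the corresponding small triangle is upward or downward. Then I would verify properness of this coloring on $\GC_{k,l}(X)$ by checking edges of two kinds. Edges internal to a single cluster $\overline{\triangle}_{k,l}(p)$ are handled by the upward/downward alternation above, together with the fact that flipping all colors on a cluster (which is what happens when $p \in B$) preserves properness. Edges of the second kind arise from the gluing step (iii) of Definition~\ref{Def(GC)}: when $e \in E(X)$ joins $p$ and $q$, the clusters $\overline{\triangle}_{k,l}(p)$ and $\overline{\triangle}_{k,l}(q)$ are glued by identifying the boundary along edge $\overline{z,\omega z}$, and the new edges of $\GC_{k,l}(X)$ crossing this seam join a barycenter just inside $\triangle(0,z,\omega z)$ to a barycenter just inside $\triangle(z,(1+\omega)z,\omega z)$; these are small triangles of $\mathbb{Z}[\omega]$ sharing an edge, hence of opposite type $t$. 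Since $p$ and $q$ lie on opposite sides of the bipartition of $X$, exactly one of the two clusters has its colors flipped relative to $t$, so the two endpoints still receive opposite colors. (One must also check that the seam identifications themselves are color-consistent, i.e.\ a barycenter on the shared boundary receives the same color whether viewed from $p$'s side or $q$'s side; this again reduces to the fact that reflection across the line through $z$ and $\omega z$ together with the $A$/$B$ flip preserves $t$ on the identified triangles, which is a direct check from the geometry in step (iii).) Once this is done, $\GC_{k,l}(X)$ is bipartite, and the final assertion — that the spectrum of $\Delta_{\GC_{k,l}(X)}$ is symmetric about $3$ — follows from the standard fact that a connected $r$-regular bipartite graph has Laplacian spectrum symmetric about $r$ (here $r=3$), proved by negating a function on one part of the bipartition.

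The main obstacle I anticipate is bookkeeping at the seams: making sure that the color assigned to a boundary barycenter is genuinely well-defined after all identifications in step (iii), and that the parity works out consistently around the whole graph rather than just locally. The $2\pi/3$-rotational invariance of the upward/downward coloring is what makes the three edges of a cluster behave uniformly, so the per-cluster flip $p \in A$ vs.\ $p \in B$ is the only global datum needed; but one should confirm that no inconsistency can propagate around a cycle in $X$ — this is automatic because the cluster-flip is dictated solely by the bipartition class of the underlying vertex of $X$, and $X$ being bipartite means that datum is globally coherent by hypothesis. Everything else is a routine verification on the triangular lattice.
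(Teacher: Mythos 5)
Your coloring is exactly the one the paper uses --- the intrinsic upward/downward type of the small triangle, flipped according to the bipartition class of the underlying vertex of $X$ --- and the overall strategy is the same. However, your verification of the seam edges contains a parity error that makes that step, as written, self-contradictory. You assert that the two endpoints of a seam edge correspond to small triangles of opposite type $t$, and that since exactly one of the two clusters has its colors flipped, the endpoints ``still receive opposite colors.'' But if $t(x)\neq t(y)$ and exactly one of the two values is complemented, the resulting colors are \emph{equal}: they are $t(x)$ and $1-t(y)=t(x)$. So either your premise or your conclusion in that sentence must be wrong; opposite types plus a single flip cannot yield opposite colors.

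What actually happens --- and what the paper's proof records in one sentence --- is the opposite parity. The cluster of $q$ is glued to that of $p$ by the $\pi$-rotation about the midpoint of the edge $\overline{z,\omega z}$ (the identification in step (iii) is orientation-preserving, so it is a point rotation, not the reflection across the line through $z$ and $\omega z$ that your parenthetical invokes), and a $\pi$-rotation \emph{switches} upward and downward triangles. Since $t$ must be defined intrinsically, via each cluster's own standard position in $\mathbb{Z}[\omega]$ (there is no global embedding of all clusters into the lattice), a vertex of $\overline{\triangle}_{k,l}(q)$ occupying a downward triangle in the glued picture has \emph{upward} intrinsic type, so the two endpoints of a seam edge have the \emph{same} intrinsic type $t$; it is then the $A$/$B$ flip alone that makes their colors opposite. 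The same type-switching under the $\pi$-rotation, cancelled by the $A$/$B$ flip, is what makes the colors of the identified boundary vertices well defined. With this correction your argument coincides with the paper's; your treatment of the internal edges, of the $2\pi/3$-symmetry, of the global coherence via the bipartition of $X$, and of the spectral symmetry about $3$ is fine.
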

\begin{proof}
Let a bipartition of $X$ be given 
and either black or white be assigned 
to each vertex $p\in V(X)$. 
Each vertex $x$ of each $\triangle_{k,l}(p)$ 
can be colored according to a rule that 
if $p$ is white, then
\begin{itemize}
	\item paint $x$ black, 
	provided the triangle 
	in $\mathbb{Z}[\omega]$ 
	corresponding to $x$ is upward; 
	\item paint $x$ white, 
	provided the triangle 
	in $\mathbb{Z}[\omega]$ 
	corresponding to $x$ is downward; 
\end{itemize}
and if $p$ is black, then exchange 
black and white above. 
A white vertex is adjacent 
only to black vertices in $X$, 
and two adjacent clusters 
$\triangle_{k,l}(p)$ and $\triangle_{k,l}(q)$ 
are positioned, in $\mathbb{Z}[\omega]$, at $\pi$-rotation 
around the midpoint of an edge of $\triangle$, 
which switches upward and downward triangles. 
So, the rule above gives a bipartition of $\GC_{k,l}(X)$. 
\end{proof}
\subsubsection{The case where $X$ is $4$-valent}
Similarly as in the $3$-valent case, 
we construct for each $p\in V(X)$ 
an appropriate subgraph 
$\square_{k,l}(p)=(V(\square_{k,l}(p)),E(\square_{k,l}(p)))$ 
of $\overline{\square}_{k,l}(p)$, 
still called the \emph{$(k,l)$-cluster}, 
so as to have $k^2+l^2$ vertices. 
To this end, we need to clarify the cases 
where a barycenter of a small square in $\mathbb{Z}[i]$ 
lies on an edge of $\square=\square(0,z,(1+i)z,iz)$, 
where $z=k+li$. 
\par
For $a,b\in \mathbb{Z}$, 
we denote by $\square(a,b)$ 
the small square in $\mathbb{Z}[i]$ 
with vertices $a+bi,(a+1)+bi,(a+1)+(b+1)i,a+(b+1)i$, 
whose barycenter is given as $a+1/2+(b+1/2)i$. 
\begin{Lem}
Let $k\geq l\geq 0$, $k\neq 0$, 
$m:=\gcd(k,l)$, $k_1:=k/m$ and $l_1:=l/m$. 
An edge of the square 
$\square=\square(0,z,(1+i)z,iz)$, 
where $z=k+li$ in $\mathbb{Z}[i]$ 
passes through a barycenter of 
a small square in $\mathbb{Z}[i]$ 
if and only if 
\begin{equation}
	k_1\not\equiv 0\pmod 2,
	\qquad
	k_1\equiv l_1\pmod 2
\label{Eq(k-l=0_mod2)}
\end{equation}
Moreover, if this is the case, 
each edge of\/ $\square$ passes through 
exactly $m$ barycenters. 
\end{Lem}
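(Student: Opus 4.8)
The plan is to reduce the assertion to a single edge of $\square$ and then solve an elementary linear Diophantine problem, in parallel with Lemma~\ref{Lem(through_barycenter)}. First I would invoke the $\pi/2$-rotational symmetry of $\overline{\square}_{k,l}(p)$ recorded in Definition~\ref{Def(GC)}: the rotation by $\pi/2$ about the center $(1+i)z/2$ of $\square$ is the map $w\mapsto iw+z$, which preserves the lattice $\mathbb{Z}[i]$ because $z\in\mathbb{Z}[i]$, cyclically permutes the four edges of $\square$, and carries the barycenter of a small square to the barycenter of another small square. Hence it suffices to examine the edge $\overline{0,z}$, and whatever number we obtain there is the common number on all four edges.

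A point of $\overline{0,z}$ has the form $tz=tk+tl\,i$ with $t\in(0,1)$, since the endpoints $0$ and $z$ lie in $\mathbb{Z}[i]$ and are thus never barycenters. Such a point is the barycenter of a small square precisely when $p:=2tk$ and $q:=2tl$ are both odd integers. Writing $m=\gcd(k,l)$, $k=mk_1$, $l=ml_1$ with $\gcd(k_1,l_1)=1$, the identity $p/k=2t=q/l$ yields $pl_1=qk_1$, and coprimality forces $p=k_1s$ and $q=l_1s$ for a positive integer $s$ with $t=s/(2m)$; conversely, each integer $s$ with $0<s<2m$ produces such a point. Now $p=k_1s$ and $q=l_1s$ are both odd if and only if $k_1$, $l_1$ and $s$ are all odd. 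Since $k_1$ and $l_1$ are coprime they cannot both be even, so the requirement ``$k_1$ and $l_1$ both odd'' is exactly \eqref{Eq(k-l=0_mod2)}; when it fails there is no admissible $s$, hence no barycenter on any edge, and the degenerate case $l=0$ is included here (a horizontal or vertical integer segment never meets a point with both coordinates in $1/2+\mathbb{Z}$). This proves the ``if and only if''.

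Assuming \eqref{Eq(k-l=0_mod2)} holds, the barycenters on $\overline{0,z}$ correspond bijectively to the odd integers $s$ with $0<s<2m$, namely $s=1,3,\dots,2m-1$; there are exactly $m$ of these, and distinct values of $s$ give distinct $t$, hence distinct points. By the symmetry reduction the same count $m$ is obtained on each of the four edges, which completes the proof. There is no genuinely hard step: the argument is elementary number theory. The only points demanding a little care are the coprimality bookkeeping that produces $p=k_1s$ and $q=l_1s$, the remark that the excluded endpoints $s=0,2m$ are both even so that $t\in(0,1)$ is automatic once $s$ is odd, and the handling of the degenerate value $l=0$.
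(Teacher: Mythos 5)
Your proof is correct; the paper in fact states this lemma without proof, and your argument (reduce to the edge $\overline{0,z}$ by the lattice-preserving rotation $w\mapsto iw+z$, then parametrize candidate points as $t=s/(2m)$ via the coprimality of $k_1$ and $l_1$ and count the odd $s$ in $(0,2m)$) is exactly the elementary computation one expects and supplies the missing details, including the degenerate case $l=0$. No gaps.
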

Unlike the $3$-valent case, 
we cannot choose a cluster $\square_{k,l}(p)$ 
with $k^2+l^2$ vertices 
to have the $\pi/2$-rotational symmetry 
in the case where $k_1\not\equiv 0\pmod 2$, 
$k_1\equiv l_1\pmod 2$ and $m\not\equiv 0\pmod 2$
because 
no vertex of $\overline{\square}_{k,l}(p)$ is positioned 
at the barycenter of $\square$ and 
$k^2+l^2=m^2((k_1-l_1)^2+2k_1l_1)$ 
is not divided by $4$. 
Even in such cases, $\square_{k,l}(p)$ only has 
to have the same number of outward edges 
among the four directions to every adjacent cluster. 
\begin{Lem}[cf.\ {\cite[Corollary IV.6]{MR1055084}}]
\label{Lem(euler_circuit)}
Let $X$ be a $4$-valent graph 
equipped with an orientation at each vertex. 
Then there exists an Euler circuit $\varepsilon$ of $X$ 
which turns either left or right 
at every vertex of $X$. 
\end{Lem}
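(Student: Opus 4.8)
\section*{Proof proposal}

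The plan is to translate the statement into the language of \emph{transition systems}. At a vertex $v$ the given orientation is a cyclic order $e_1,e_2,e_3,e_4$ of the four incident edges; call the pairing matching $e_1$ with $e_3$ and $e_2$ with $e_4$ the \emph{straight transition} at $v$, and the two pairings matching $e_1$ with $e_2$ (and $e_3$ with $e_4$), resp.\ $e_2$ with $e_3$ (and $e_4$ with $e_1$), the \emph{turning transitions} at $v$. A choice of one transition at every vertex --- a transition system --- partitions $E(X)$ into closed walks, and a closed walk so obtained turns left or right at a given vertex precisely when the transition chosen there is turning. Since an Euler circuit of a $4$-regular graph uses all four edge-ends at each vertex, it induces a transition at every vertex, and it is an Euler circuit of the kind we want exactly when this transition system is turning everywhere and its induced partition has a single part. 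So it suffices to construct a turning transition system that produces a single closed walk.

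First I would take an arbitrary turning transition system $\tau$ --- there are $2^{\card{V(X)}}$ of them --- and let $C_1,\dots,C_m$ be the closed walks it produces, each turning left or right at every vertex it meets. If $m=1$ we are done. If $m\ge 2$, I claim some vertex is met by two distinct walks: otherwise at every vertex all four edge-ends lie in a single $C_i$, hence every edge lies in a single $C_i$, and the property ``lying in the common walk'' is unchanged across every edge; as $X$ is connected this forces $m=1$, a contradiction.

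At such a vertex $v$ the two $\tau$-pairs are $\{a,b\}$, belonging to some $C_i$, and $\{c,d\}$, belonging to some $C_j\ne C_i$. The move is to replace the transition of $\tau$ at $v$ by the other turning transition there --- the one equal neither to the present one nor to the straight one. Inspecting the cyclic order shows that each of its two pairs then contains exactly one of $a,b$ and exactly one of $c,d$. Since $C_i$ meets $v$ only once (the two remaining edge-ends at $v$ belong to $C_j$), and likewise $C_j$, and no $C_k$ with $k\notin\{i,j\}$ meets $v$ at all, tracing the modified transition system runs through all of $C_i$, crosses over at $v$, runs through all of $C_j$ (possibly in reverse), and returns to its start: one closed walk carrying exactly the edges of $C_i$ together with those of $C_j$, every other $C_k$ being untouched. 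So the modified (still turning) transition system has $m-1$ closed walks. Iterating the move strictly decreases the positive integer $m$, so after finitely many steps we obtain a turning transition system with a single closed walk, which is the desired Euler circuit.

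I expect the only genuinely non-formal point to be the connectedness argument that a shared vertex must exist when $m\ge 2$; the remaining care goes into the routine verifications that a transition system on a $4$-regular graph does partition the edges into closed walks, and that in the merging move re-pairing at a vertex shared by exactly two walks fuses precisely those two and leaves the others alone --- for which one must keep track of the fact that a closed walk may revisit a vertex, so that ``the pair at $v$'' always refers to a specific passage through $v$.
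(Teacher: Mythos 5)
Your proof is correct, but it runs in the opposite direction from the paper's. The paper starts from the classical fact that a connected $4$-valent graph has \emph{some} Euler circuit, and then repairs it: whenever the circuit goes straight at a vertex $p$, it must pass through $p$ a second time using the other two edge-ends, and reversing the segment between the two passages re-pairs the four edge-ends at $p$ into a turning transition while leaving the transition at every other passage unchanged; the number of straight passages strictly decreases, so finitely many reversals yield the desired circuit. You instead start from a transition system that is turning everywhere (so the local condition holds by fiat but the edge set may split into several closed walks) and merge components by switching to the other turning transition at a vertex shared by two distinct walks, the number of walks being your monovariant. Both are standard devices from the A-trail literature. The paper's route is shorter but leans on Euler's theorem as a black box and needs the (easy) observation that segment reversal does not disturb transitions elsewhere; yours is self-contained — the closed-walk decomposition of a transition system and the existence of an all-turning system are immediate — at the cost of the connectedness argument locating a vertex shared by two walks and the bookkeeping for repeated visits, both of which you handle correctly.
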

We note that the Euler circuit mentioned 
in Lemma \ref{Lem(euler_circuit)} 
is an example of $A$-trails introduced 
in \cite{MR1055084}.
\begin{proof}
As is well-known, any $4$-valent graph $X$ 
has an Euler circuit, 
which is by definition a closed path in $X$ 
which visits every edge exactly once. 
Let us take an Euler circuit $\varepsilon$ of $X$ 
and suppose that $\varepsilon$ goes straight ahead 
at a vertex $p\in V(X)$. 
The circuit $\varepsilon$ comes back to $p$ 
again from one of the other directions 
after it straight ahead at $p$ 
(because $X$ is $4$-valent). 
By following the interval in opposite directions, 
the obtained circuit goes straight ahead 
one time fewer than $\varepsilon$. 
This proves Lemma~\ref{Lem(euler_circuit)}. 
\end{proof}
The Euler circuit $\varepsilon$ 
obtained in Lemma~\ref{Lem(euler_circuit)} 
assigns a direction to each edge of $X$ 
such that the direction alternates 
between inward and outward at each vertex of $X$. 
\par
Now we can clearly define 
$V(\square_{k,l}(p))$ 
by the set of vertices $x$ of 
$\overline{\square}_{k,l}(p)$ 
satisfying one of the following conditions:
\begin{enumerate}[label=(\roman*)]
	\item \label{Item(4-clstr_intrr)}
	$x$ corresponds to a square in $\mathbb{Z}[i]$ 
	whose barycenter lies in the interior of 
	$\square$;
	\item \label{Item(4-clstr_bdry)}
	$x$ corresponds to a barycenter lying on 
	the two edges of $\square$ with opposite sides 
	which correspond to the outward edges of $X$ 
	with respect to the Euler circuit 
	$\varepsilon$ in Lemma~\ref{Lem(euler_circuit)}. 
\end{enumerate}
\begin{figure}[htbp]
	\centering
	\begin{tabular}{ccc}
		\subfigure[$(k,l)=(5,0)$]%
		{\includegraphics[width=.3\textwidth]%
		{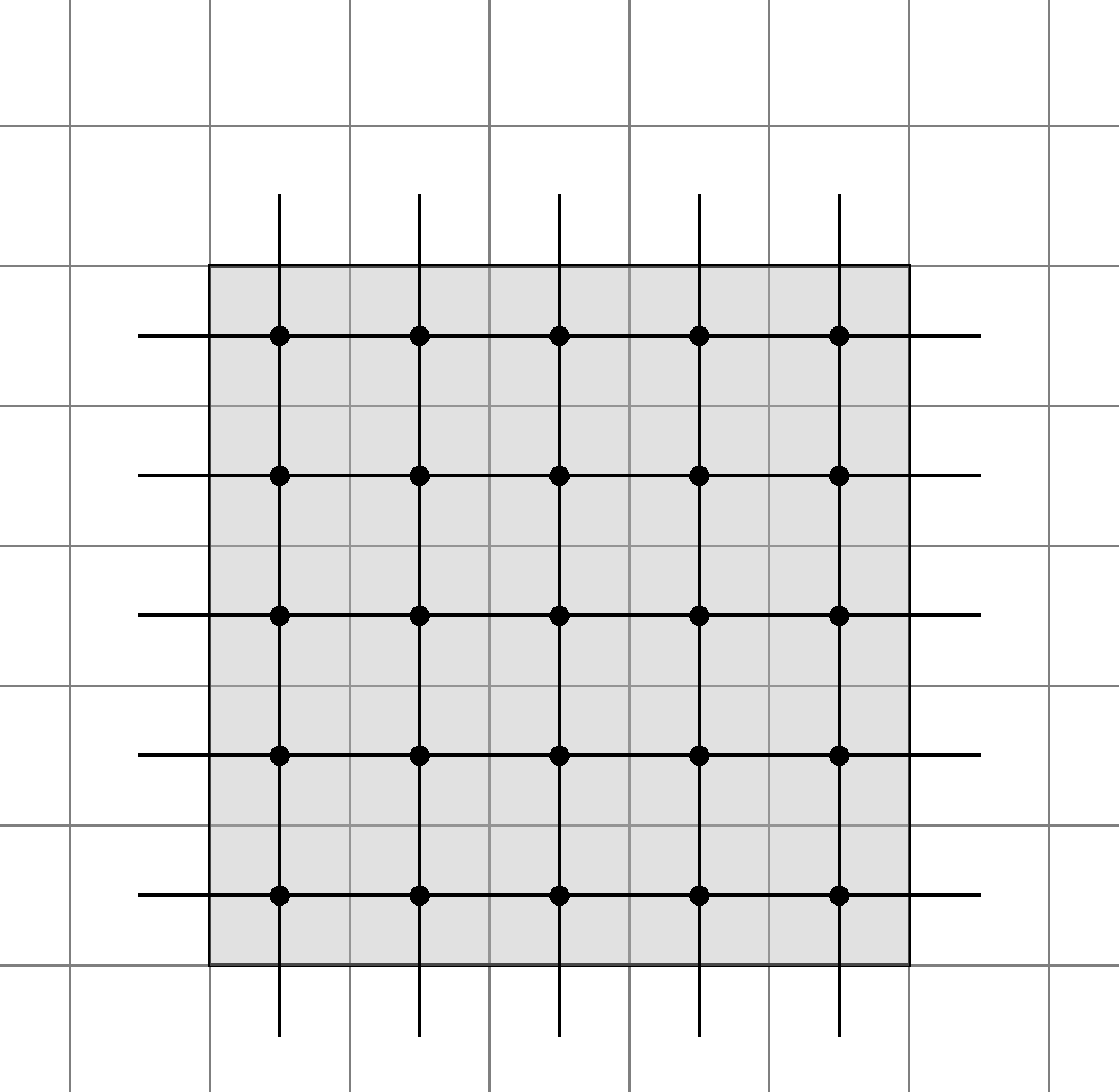}}
		&
		\subfigure[$(k,l)=(3,3)$]%
		{\includegraphics[width=.3\textwidth]%
		{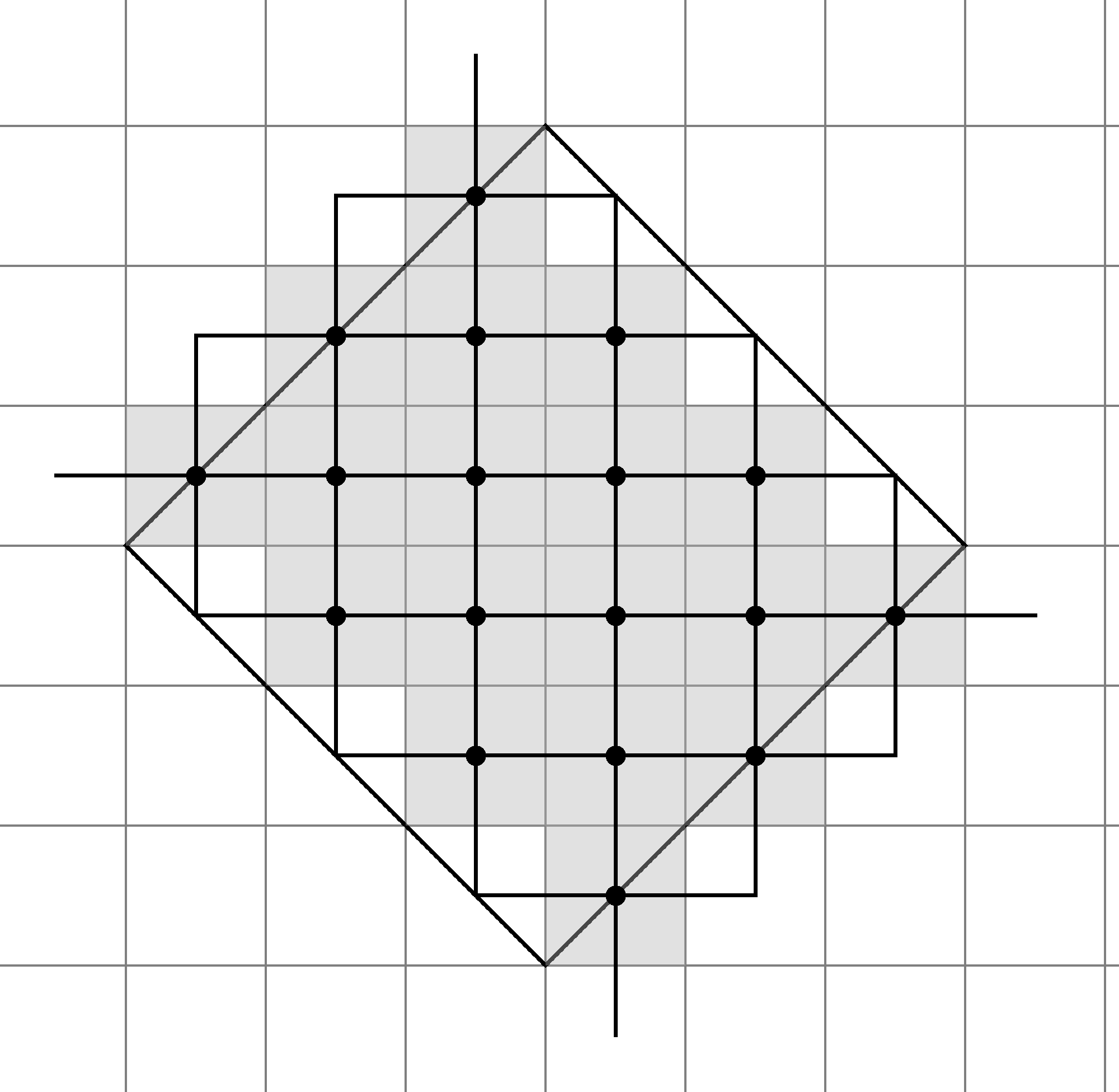}}
		&
		\subfigure[$(k,l)=(5,1)$]%
		{\includegraphics[width=.3\textwidth]%
		{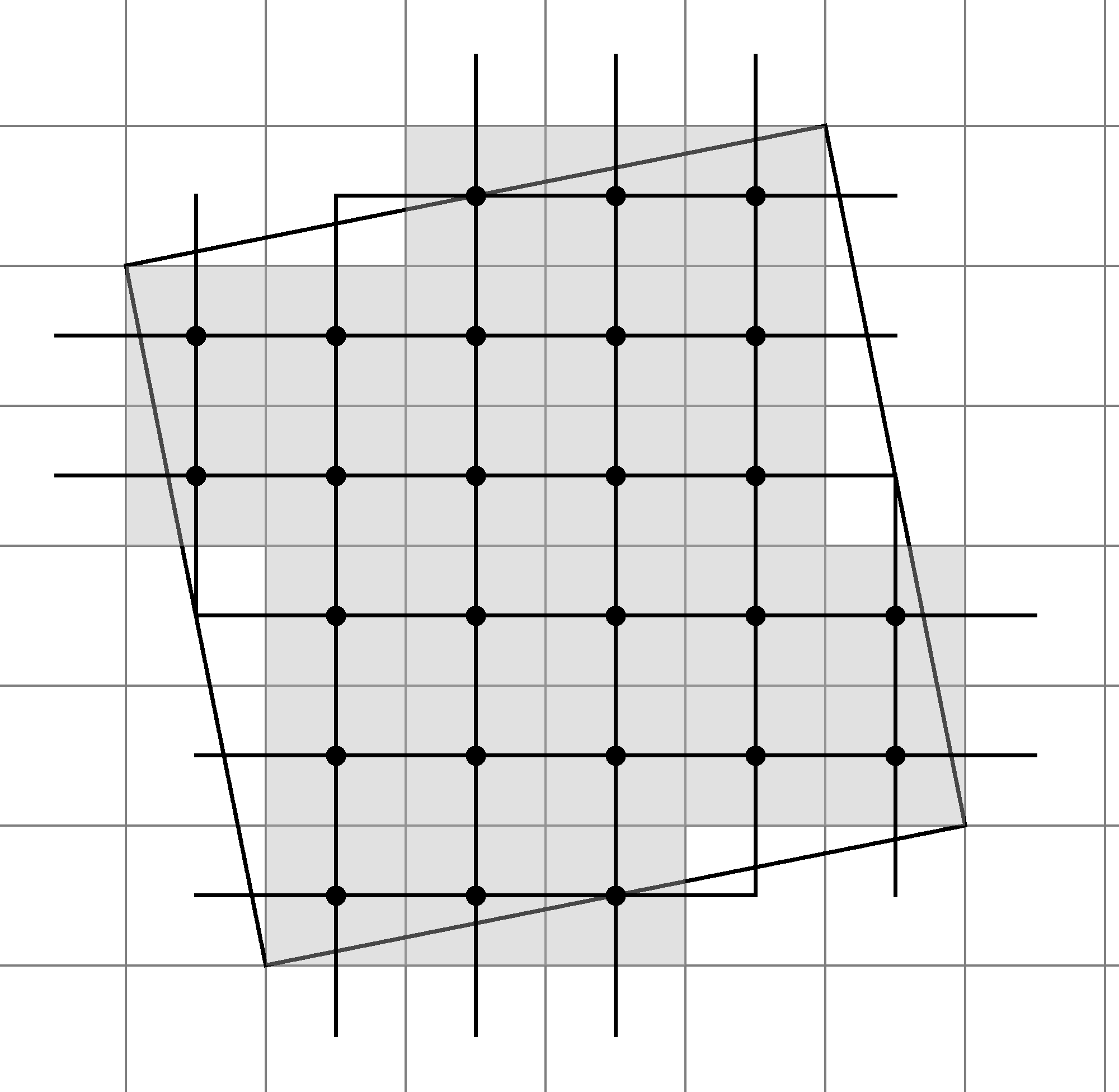}}
	\end{tabular}
	\caption{%
		$4$-valent $(k,l)$-clusters; 
		$V(\square_{k,l}(p))$ consists of the barycenters of 
		the gray squares. 
	}
\end{figure}
\subsection{Specific conditions on $3$-valent plane graphs}
\label{Sec(conditions_on_3-vlnt_graphs)}
In this subsection 
a graph $X$ is assumed to be 
$3$-valent and \emph{embedded in a plane}, 
and we study some structure of 
Goldberg-Coxeter constructions of $X$. 
Let us consider the 
following four conditions on $X$, 
which is, as shall be seen in Section~\ref{Sec(3m-gons)}, 
related to the multiplicities of certain eigenvalues 
of Goldberg-Coxeter constructions. 
\begin{enumerate}[itemsep=5pt,topsep=5pt]
	\item[(F)] 
	The number of edges surrounding each face is 
	divisible by $3$. 
	\item[(CN)] 
	For each vertex $p\in V(X)$, 
	the numbers $1,2$ and $3$ are assigned in this order, 
	with respect to the positive orientation, 
	to the three edges of $X$ with $p$ 
	as the common endpoint. 
	\item[(N)] 
	There exists a vertex numbering 
	$V(\GC_{2,0}(X))\rightarrow \{1,2,3\}$ 
	with the following properties:
	\begin{enumerate}[topsep=3pt]
	\setlength{\leftskip}{18pt}
		\item[(N-i)] 
		The number $0$ is assigned 
		to the center of each $V(\triangle_{2,0}(p))$
		($p\in X$); 
		\item[(N-ii)] 
		the number assigned to $x\in V(\GC_{2,0}(X))$ 
		is different from those of 
		the adjacent vertices 
		in $\GC_{2,0}(X)$ of $x$. 
	\end{enumerate}
	\item[(C)] 
	$V(X)$ can be colored by two colors, 
	say black and white, 
	with the following properties:
	\begin{enumerate}[topsep=3pt]
	\setlength{\leftskip}{18pt}
		\item[(C-i)] 
		A black vertex is adjacent to three white vertices; 
		\item[(C-ii)] 
		a white vertex is adjacent to 
		exactly one black vertex, 
		so the other two adjacent vertices are white; 
		\item[(C-iii)] 
		for any pair of black vertices 
		$x,y\in V(X)$ which are three vertices 
		away from each other, 
		there is a path from $x$ to $y$ 
		either turning left twice or 
		turning right twice. 
	\end{enumerate}
\end{enumerate}
\begin{Rem}
\label{Rem(condition)}
We remark that
\begin{enumerate}
	\item \label{Item(condition-1)}
	the condition (N) determines 
	a special $3$-edge-coloring of $\GC_{2,0}(X)$, 
	but a graph with $3$-edge-coloring 
	does not necessarily satisfy (N),  
	\item \label{Item(condition-2)}
	a $3$-valent plane graph 
	which satisfies the condition (F) 
	is known to be a covering graph of the $K_4$ graph. 
	(This fact is proved also from 
	Proposition~\ref{Prop(coherent_numbering)} below.\ )
\end{enumerate}
\end{Rem}
The \emph{coherent edge numbering} (CN) 
implies the condition (N); indeed, 
let $p\in V(X)$ and let $e_1,e_2$ and $e_3$ be three edges 
of $X$ emanating from $p$. 
We assign $0$ to $p$ regarded as 
a vertex of $\GC_{2,0}(X)$, and, for $i=1,2$ and $3$, 
assign $i$ to the vertex of $\GC_{2,0}(X)$ positioned 
at the ``opposite-side'' to $e_i$. 
The resulting numbering of vertices of $\GC_{2,0}(X)$ 
satisfies (N-i) and (N-ii) 
(see Figure~\ref{Fig(coherent_number)}). 
Moreover, as is easily proved, (N) 
implies the condition (F). 
So the following proposition shows that 
(F), (CN) and (N) are mutually equivalent. 
\begin{figure}[htbp]
	\centering
	\includegraphics[height=4cm]%
	{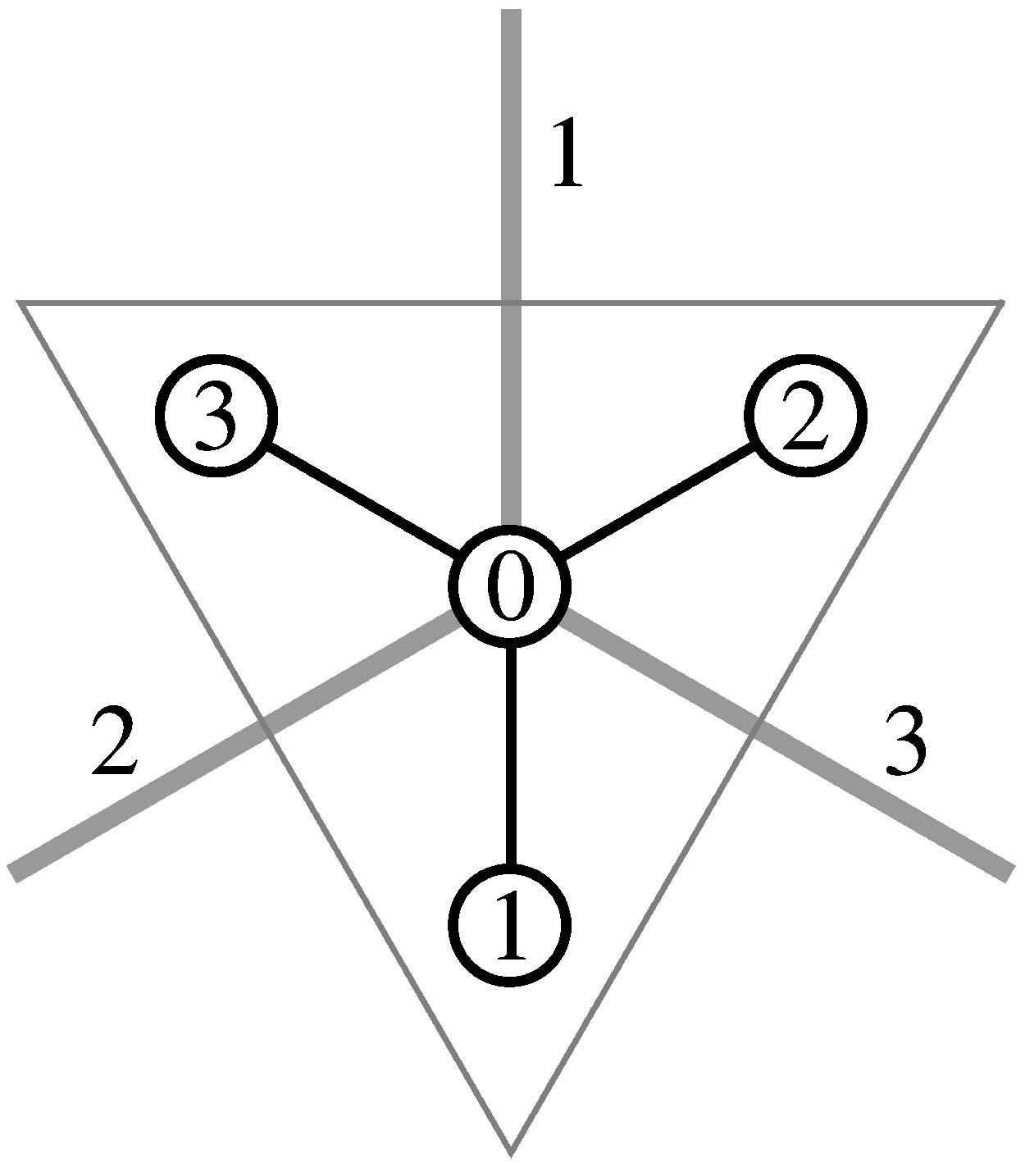}
	\caption{The gray (resp.\ black) segments 
	represent edges of $X$ (resp.\ $\GC_{2,0}(X)$). }
	\label{Fig(coherent_number)}
\end{figure}
\begin{Prop}\label{Prop(coherent_numbering)}
Let $X$ be a $3$-valent plane graph 
satisfying {\upshape(F)}. 
Then $X$ has a coherent edge numbering 
$E(X)\rightarrow \{1,2,3\}$ satisfying 
{\upshape(CN)}. 
\end{Prop}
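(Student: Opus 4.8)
The plan is to recognise condition~(F) as the vanishing of the only obstruction to patching together the evident local coherent numberings, the ambient surface being simply connected. I would first transfer the problem to the set of \emph{darts} of $X$, a dart being an incident pair $(v,e)$ with $v\in V(X)$ and $e$ an edge at $v$ (each vertex has exactly three darts). Giving a coherent edge numbering $c\colon E(X)\to\mathbb{Z}/3\mathbb{Z}$ is the same as giving a function $\widetilde c$ on darts such that (a) $\widetilde c(v,e')=\widetilde c(v,e)+1$ whenever $e'$ is the counterclockwise successor of $e$ at $v$ with respect to the positive orientation, and (b) $\widetilde c(v,e)=\widetilde c(w,e)$ for the two darts of an edge $e=\{v,w\}$; one then puts $c(e):=\widetilde c(v,e)$, which is well defined by~(b) and coherent by~(a). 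Thus it suffices to produce such a $\widetilde c$.

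Next I would introduce the plane graph $X'$ obtained from $X$ by \emph{truncating} each vertex, i.e.\ replacing $v$ by a small triangle $T_v$. Its vertex set is exactly the set of darts of $X$; its edges are the ``triangle sides'' (one for each corner of $X$, i.e.\ for each counterclockwise-consecutive pair of edges at a vertex) together with the ``edge segments'' (one for each edge of $X$); and its faces are the triangles $T_v$ ($v\in V(X)$) and the truncated faces $F'$, one for each face $F$ of $X$, where $\partial F'$ alternates between edge segments and triangle sides and has twice as many sides as $F$. On $X'$ I define a $\mathbb{Z}/3\mathbb{Z}$-valued assignment $\alpha$ on oriented edges: $\alpha=1$ on each triangle side in the counterclockwise sense around its triangle $T_v$ (with $\alpha$ changing sign under reversal), and $\alpha=0$ on every edge segment. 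The crucial computation is that $\alpha$ sums to $0$ around the boundary of every face of $X'$: around $T_v$ one gets $1+1+1\equiv 0$; around $F'$ one gets $0$ from the edge segments together with a sum of the contributions of the $n$ triangle sides encountered, each of which is $\pm 1$ and all of the same sign (since $F'$ lies on a fixed side of each), hence $\pm n$, where $n$ is the number of edges surrounding $F$ --- and this vanishes in $\mathbb{Z}/3\mathbb{Z}$ exactly by hypothesis~(F).

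To finish: since $X'$ is a connected plane graph, its cycle space is spanned by the boundaries of its faces, so $\alpha$, which kills each of these, sums to $0$ along every cycle of $X'$. Fixing a base dart $d_0$, setting $\widetilde c(d_0)=0$, and letting $\widetilde c(d)$ be the sum of $\alpha$ along any path in $X'$ from $d_0$ to $d$ thus gives a well-defined function $\widetilde c$ on the darts with $\widetilde c(\text{head})-\widetilde c(\text{tail})=\alpha$ on every oriented edge. Reading this off: on triangle sides it is relation~(a) and on edge segments it is relation~(b), so $\widetilde c$ descends to the desired coherent edge numbering.

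The only real friction I expect is the bookkeeping --- pinning down cleanly the dictionary between the vertices, edges and corners of $X$, its darts, and the vertices, edges and faces of $X'$, and fixing the orientation conventions entering $\alpha$. The cocycle computation itself is robust: it is insensitive to the individual signs of the triangle-side contributions along $\partial F'$, and around $T_v$ it only uses that the three sides are traversed consistently; so once the combinatorial dictionary is fixed, condition~(F) enters as exactly the assertion that $n\equiv 0\pmod 3$, and everything else is formal. (One could also phrase the last step as the vanishing of $H^1$ of the sphere realised by $X'$.)
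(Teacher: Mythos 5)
Your proposal is correct and is essentially the paper's own argument: both define a $\mathbb{Z}/3$-valued turning cocycle ($\pm 1$ per left/right turn at a vertex, i.e.\ per triangle side of your truncated graph) and integrate it from a base edge, with well-definedness reduced to its vanishing on face boundaries --- which is exactly (F), since a face boundary turns the same way at each of its $n$ vertices, contributing $\pm n \equiv 0 \pmod 3$ --- together with the fact that face boundaries span the cycle space of a connected plane graph. Your repackaging via darts and the truncated graph $X'$ is only a cosmetic variant of the paper's formulation in terms of paths of adjacent edges based at edge midpoints, where the paper passes through $\pi_1$ and $H_1$ to reach the same conclusion.
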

\begin{proof}
For a sequence 
$e_1,e_2,\dots,e_k\in E(X)$ 
of adjacent edges in $X$ 
(namely $e_i\neq e_{i+1}$ and they have a common endpoint), 
we denote by $(e_1,e_2,\dots,e_k)$ 
the path along the edges 
starting from $m(e_1)$, the midpoint of $e_1$, 
and ending with $m(e_k)$. 
To get a desired numbering 
$\nu\colon E(X)\rightarrow \{1,2,3\}$, 
we fix an edge $e_0\in E(X)$, 
and assign $3$ to $e_0$. 
For adjacent edges $e,e'\in E(X)$ 
with the common endpoint $p$, 
let us define $\tau(e,e')$ as
\begin{equation}
	\tau(e,e')
	:=
	\left\{
	\begin{aligned}
	&+1,& & 
	\text{if the path $(e,e')$ turns right at $p$},
	\\
	&-1,& & 
	\text{if the path $(e,e')$ turns left at $p$},
	\end{aligned}
	\right.
\label{Eq(tau(e,e'))}
\end{equation}
Note that $\tau(e,e')=-\tau(e',e)$. 
We then define 
$\nu\colon E(X)\rightarrow \{1,2,3\}$ 
for $e\in E(X)$ as
\begin{equation}
	\nu(e)
	:=\sum_{i=1}^n\tau(e_{i-1},e_i)\pmod 3
\label{Eq(edge_numbering)}
\end{equation}
by choosing a path 
$\gamma=(e_0,e_1,\dots,e_{n-1},e_n=e)$ 
from $e_0$ to $e$. 
What we have to prove is that 
$\nu(e)$ is independent of the choice of $\gamma$. 
To this end, 
let $\mathcal{P}(X,e_0)$ be the set of sequences of 
adjacent edges in $X$ beginning with $e_0$ 
and let 
$\varphi\colon \mathcal{P}(X,e_0)\rightarrow \{1,2,3\}$ 
be a map defined as
\[
	\varphi(\gamma)
	:=\sum_{i=1}^n\tau(e_{i-1},e_i)\pmod 3,
	\quad
	\text{for 
	$\gamma=(e_0,e_1,\dots,e_n)\in \mathcal{P}(X,e_0)$}.
\]
Note that for any $\gamma=(e_0,e_1,\dots,e_n),
\gamma'=(e_0,e_1',\dots,e_{m-1}',e_n)
\in \mathcal{P}(X,e_0)$,
the joined (closed) path 
$\gamma'^{-1}\cdot \gamma
=(e_0,e_1,\dots,e_n,e_{m-1}',\dots,e_1',e_0)$ 
satisfies
\[
	\varphi(\gamma'^{-1}\cdot \gamma)
	\equiv\varphi(\gamma)-\varphi(\gamma')\pmod 3.
\]
Thus to prove that the map $\nu$ 
defined by (\ref{Eq(edge_numbering)}) is well-defined, 
it suffices to see that 
$\varphi(\gamma)=3$ for any closed path 
$\gamma=(e_0,e_1,\dots,e_n=e_0)$. 
Notice that $\varphi$ has the same image after removing a 
``back-tracking'' part, that is, 
if $\gamma=(e_0,\dots,e_{i-1},e_i,e_{i+1},\dots,e_0)$ 
contains a triplet of mutually adjacent edges 
$e_{i-1}$, $e_i$ and $e_{i+1}$, then
\[
	\varphi(\gamma)
	\equiv\varphi(e_0,\dots,e_{i-2},e_{i+2},\dots,e_0)
	\pmod 3,
\]
and if $\gamma=(e_0,\dots,e_{i-1},e_i,e_{i+1},\dots,e_0)$ 
satisfies $e_{i-1}=e_{i+1}$, then
\[
	\varphi(\gamma)
	\equiv\varphi(e_0,\dots,e_{i-1},e_{i+2},\dots,e_0)
	\pmod 3.
\]
Therefore the restriction 
$\varphi\colon \mathcal{CP}(X,e_0)\rightarrow \{1,2,3\}$ 
of $\varphi$ to $\mathcal{CP}(X,e_0)$, 
the set of closed paths with base edge $e_0$, 
descends to a homomorphism
\[
	\overline{\varphi}\colon
	\pi_1(X,m(e_0))\rightarrow \{1,2,3\},
\]
where $\pi_1(X,m(e_0))$ is the fundamental group of $X$ 
with base point $m(e_0)$. 
Since $\{1,2,3\}=\mathbb{Z}/3\mathbb{Z}$ 
is an abelian group, 
$\overline{\varphi}$ further descends to 
a homomorphism
\[
	\widetilde{\varphi}\colon 
	H_1(X,\mathbb{Z})\rightarrow \{1,2,3\},
\]
where $H_1(X,\mathbb{Z})$ is the $1$-dimensional 
homology group of $X$. 
Now any $\gamma\in H_1(X,\mathbb{Z})$ 
can be written as 
$\gamma=\sum_{f\colon \text{face of $X$}}a_f\partial f$, 
where $a_f\in \mathbb{Z}$ and 
$\partial f$ is the cycle consisting of 
edges around $f$. 
Our assumption implies that 
$\widetilde{\varphi}(\partial f)=3$ 
for any face $f$ of $X$. 
Hence we conclude that $\widetilde{\varphi}\equiv 3$, 
which implies that $\varphi\equiv 3$ 
on $\mathcal{CP}(X,e_0)$. 
\end{proof}
A relation between (F) and (C) is stated as follows. 
\begin{Prop}
\label{Prop(F_implies_C)}
Let $X$ be a $3$-valent plane graph 
satisfying {\upshape(F)}. 
Then $X$ has a vertex coherent coloring satisfying 
{\upshape(C-i)}, 
{\upshape(C-ii)} and 
{\upshape(C-iii)}. 
\end{Prop}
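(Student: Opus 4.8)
The plan is to build the two-coloring of $V(X)$ directly from the coherent edge numbering $\nu\colon E(X)\to\{1,2,3\}$ provided by Proposition~\ref{Prop(coherent_numbering)}. Recall that (CN) means the labels $1,2,3$ appear in this cyclic order (with respect to the positive orientation) around every vertex. First I would observe that this makes the edge numbering into an honest proper $3$-edge-coloring, so the edges labelled $1$ together with the edges labelled $2$ form a disjoint union of cycles (a $2$-factor), and likewise for any other pair of labels. The idea is to declare a vertex $p$ \emph{black} if a designated ``distinguished'' edge at $p$ is the one labelled (say) $3$ in a way that is globally consistent, and \emph{white} otherwise; more precisely, I would use the $\{1,2\}$-cycles of the $2$-factor $F_{12}$ and orient each such cycle, then color $p$ black or white according to the parity of its position along the cycle it lies on. Because every vertex lies on exactly one $\{1,2\}$-cycle and that cycle has even length (the labels $1,2$ alternate along it), this parity coloring is well defined, and each $\{1,2\}$-cycle is properly $2$-colored; the edge at $p$ labelled $3$ then joins $p$ to a vertex on a \emph{different} $\{1,2\}$-cycle.

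Next I would verify (C-i) and (C-ii). For a black vertex I want all three neighbours white, while for a white vertex I want exactly one black neighbour. The two neighbours of $p$ along its own $\{1,2\}$-cycle have the opposite color to $p$ by construction, so the only thing to arrange is the color of the third neighbour $q$ (reached by the label-$3$ edge). I would fix the global convention that black vertices are exactly the endpoints of label-$3$ edges whose ``$\{1,2\}$-orientation'' points in a prescribed way — equivalently, I would choose the orientations of the $\{1,2\}$-cycles and the start-parity so that each label-$3$ edge has one black and one white endpoint, which is possible because the label-$3$ edges form a perfect matching and I may flip the parity on each $\{1,2\}$-cycle independently. With that convention: a black vertex has its two cycle-neighbours white and its matching-neighbour white, giving (C-i); a white vertex has its two cycle-neighbours black-or-white depending on parity — here I need to be a little more careful, and the right statement is that among the two cycle-neighbours of a white vertex exactly the ``predecessor'' is black while the ``successor'' is white, and the matching-neighbour is white, so indeed exactly one black neighbour, giving (C-ii). (If that bookkeeping forces a contradiction globally, the alternative is to set up the coloring from the $\{1,3\}$- or $\{2,3\}$-factor instead and argue by the symmetry of the three labels; at least one choice works, and condition (F) is what guarantees consistency around faces.)

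Finally I would check the ``turning'' condition (C-iii): for black vertices $x,y$ at combinatorial distance $3$, there is a length-$3$ path from $x$ to $y$ that turns in the same direction (left, left or right, right) at both intermediate vertices. This is where condition (F) — every face has a multiple of $3$ edges — enters decisively, via the formula for $\nu$ in Proposition~\ref{Prop(coherent_numbering)}: the label change $\nu(e')-\nu(e)$ along adjacent edges equals $\tau(e,e')=\pm1$ according to whether the path turns right or left. A length-$3$ walk that turns left then left changes the label by $-2\equiv+1$, right then right changes it by $+1$ with the opposite sign pattern, while left then right (or right then left) changes it by $0$. Since $x$ and $y$ are both black, the distinguished label-$3$ edges at $x$ and $y$ are constrained, and a short case analysis on which of the two non-matching edges at $x$ the path uses shows that the ``same-direction'' option is always available; the other options would either revisit an edge or land on a vertex of the wrong color.

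The main obstacle I anticipate is the global consistency of the black/white assignment across the different $\{1,2\}$-cycles — making sure that after orienting all cycles and fixing start-parities, \emph{every} label-$3$ edge is bichromatic simultaneously. I expect this is exactly where (F) is needed: (F) forces each face boundary to have label-word a cyclic repetition of $1,2,3$ of length divisible by $3$, which pins down the parities coherently around every face and hence (since faces generate $H_1$, as in the proof of Proposition~\ref{Prop(coherent_numbering)}) globally. Once that is in hand, (C-i), (C-ii) and (C-iii) follow from the local label arithmetic described above.
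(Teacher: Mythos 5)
Your construction cannot produce a coloring satisfying (C-i) and (C-ii), for a counting reason that is independent of all the bookkeeping you defer. If (C-i) and (C-ii) hold, then every black--white edge is counted three times from the black side (each black vertex has three white neighbours) and once from the white side (each white vertex has exactly one black neighbour), and no edge joins two black vertices; hence $3\lvert B\rvert=\lvert W\rvert$ and $\lvert B\rvert=\lvert V(X)\rvert/4$. Your coloring assigns black to vertices of one parity along each $\{1,2\}$-cycle, i.e.\ to exactly half of $V(X)$. Moreover your description is internally inconsistent: you first assert that each $\{1,2\}$-cycle is \emph{properly} $2$-colored, which forces both cycle-neighbours of a white vertex to be black (already violating (C-ii)), and then assert that ``exactly the predecessor is black while the successor is white,'' which contradicts properness. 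The correct target structure is quite different: the white vertices must induce a $2$-regular subgraph (disjoint cycles) and the black vertices an independent set meeting every vertex's neighbourhood exactly once --- an efficient dominating set --- and no parity rule along a $2$-factor produces that. A secondary error: a label-$3$ edge need not join two distinct $\{1,2\}$-cycles; it can be a chord of a single one. Finally, the step you yourself flag as the ``main obstacle'' (global consistency of the parities across cycles via (F)) is exactly the substantive content of the proposition, and it is not carried out.

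For comparison, the paper's proof takes a different and more direct route. It does not use the edge numbering at all at this stage: it seeds one vertex black, greedily colors black every vertex reachable from a black vertex by a length-$3$ path turning left twice or right twice, and colors the rest white. With this definition (C-iii) holds by construction, and (C-i), (C-ii) reduce to showing that no two black vertices are adjacent. That is proved by contradiction: a hypothetical double-turn path $\gamma$ joining two adjacent black vertices closes up to a cycle bounding finitely many faces; condition (F) (via the fact that the turning sum $\tau$ around each face vanishes mod $3$, as in Proposition~\ref{Prop(coherent_numbering)}) rules out the one-face case, and a local surgery reduces the general case to it. If you want to salvage your edge-numbering idea, the natural move is to define black vertices by a condition invariant under double turns and then prove independence by this same face-reduction argument --- but that is essentially reconstructing the paper's proof, not completing yours.
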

\begin{proof}
Let $p_0\in V(X)$ be an arbitrary fixed vertex 
and color it black. 
Every vertex which is accessible by either 
turning left twice or turning right twice 
from a black vertex 
is, one after another, colored in black 
until no more vertices can be colored in black. 
The remaining vertices are colored in white. 
Now we have to check that 
(C-i) and (C-ii) are satisfied 
(while (C-iii) is necessarily satisfied). 
It is easily seen that 
a white vertex is adjacent to at least one black vertex; 
otherwise, all vertices of $X$ must be white. 
It is also easily checked that 
if a white vertex is adjacent to 
two or more black vertices, 
then two other black vertices are necessarily adjacent 
somewhere else. 
So, it suffices to show that 
any pair of black vertices cannot be adjacent. 
\begin{figure}[htbp]
	\centering
	\includegraphics[height=3.5cm]%
	{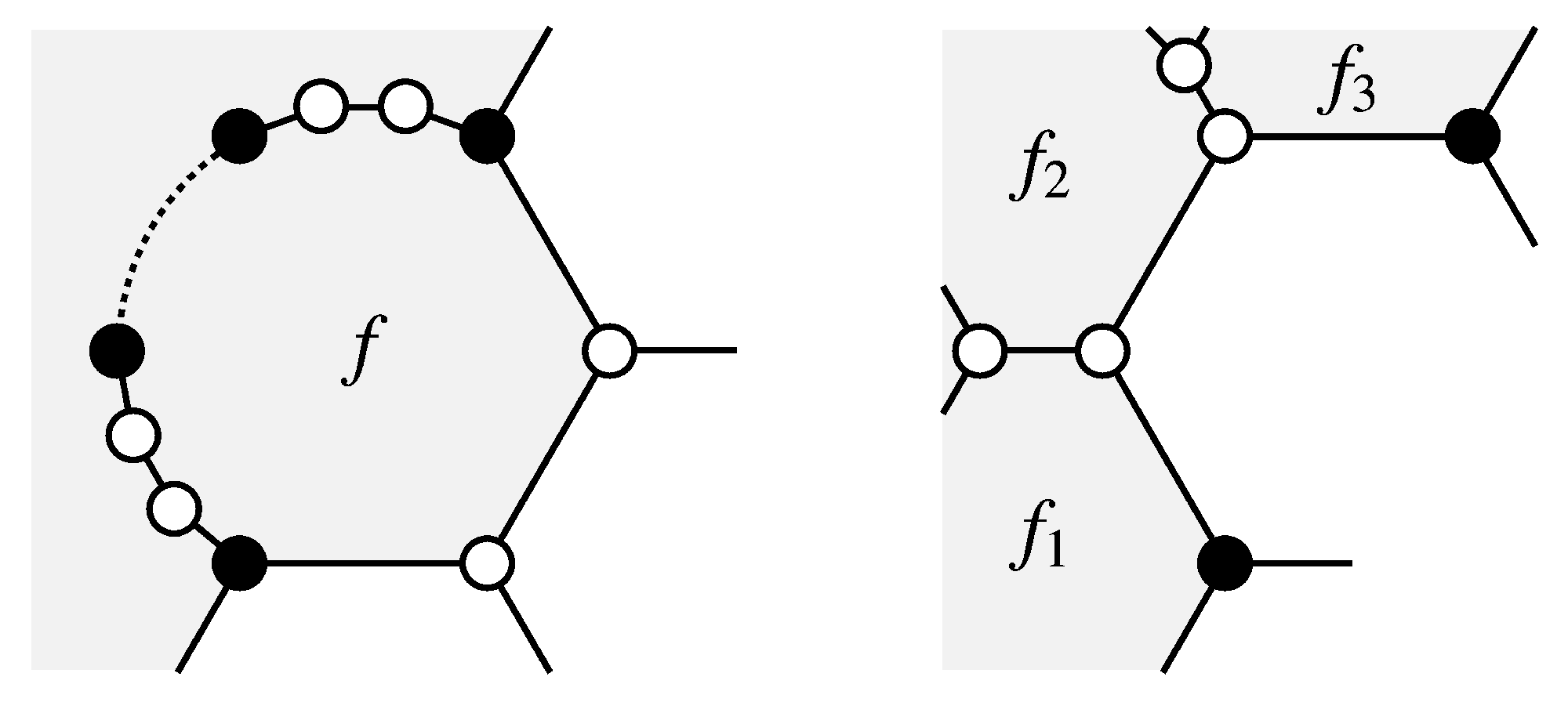}
	\caption{%
	A Part of $\gamma$ 
	where $\gamma$ turns left twice (left) 
	and a part of $\gamma$ 
	where $\gamma$ turns right twice (right). 
	The gray regions are bounded ones 
	surrounded by $\gamma$. 
	}
	\label{Fig(reducing_faces)}
\end{figure}
Suppose that there is a pair of adjacent black vertices, 
say $p,q\in V(X)$. 
From our way of the coloring, 
there is a path $\gamma$ from $p$ to $q$ 
which is a sequence of either twice turning left or 
twice turning right between black vertices. 
Then $\gamma\cup (q,p)$ is a closed path, 
which surrounds a finitely many faces, 
say $f_1,f_2,\dots,f_n$, after removing back-trackings. 
Now if $n=1$, then $\gamma$ consists of a circuit on 
the boundary $\partial f_1$ of a face $f_1$ 
and of some back-trackings 
with black base points on $\partial f_1$, 
which is a contradiction because 
the total of $\tau$ defined by (\ref{Eq(tau(e,e'))}) 
is $0\pmod 3$ after the crossing just prior to 
a lap of $\gamma\cup (q,p)$. 
So assume that $n\geq 2$. 
There are just two possibilities 
of paths along the boundary of $\bigcup_{i=1}^nf_i$ 
connecting a pair of black vertices with distance $3$, 
as indicated in Figure~\ref{Fig(reducing_faces)}. 
In either case, 
we can replace $\gamma\cup (q,p)$ by a closed path 
which does not surround a face $f_i$ 
(by ignoring back-trackings), 
and is still a sequence of either twice turning left or 
twice turning right between black vertices. 
Therefore the conclusion for the case where $n\geq 2$ 
can be deduced from the discussion 
given for the case $n=1$. 
\end{proof}
\begin{Exs}\label{Ex(F-CN-C)}
\mbox{}
\begin{enumerate}
	\item 
	The tetrahedron and 
	any of its Goldberg-Coxeter constructions 
	satisfy all the conditions above. 
	\item 
	$\GC_{2,0}(X)$ for \emph{any} 
	$3$-valent plane graph $X$ always satisfies 
	(C-i), (C-ii) and (C-iii); 
	indeed, we just have to color 
	only the ``center'' of each $(2,0)$-cluster black, 
	and the others white. 
	\begin{figure}[htbp]
		\centering
		\includegraphics[height=3.5cm]{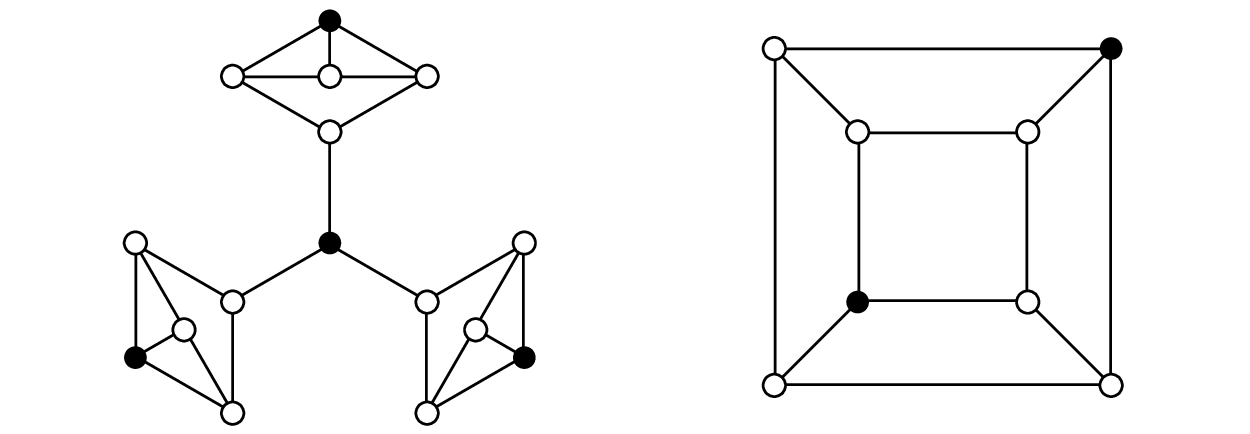}
		\caption{Left coloring satisfies 
		(C-i), (C-ii) and (C-iii). 
		Right coloring on the cube satisfies 
		(C-i) and (C-ii) 
		but does not satisfy (C-iii). }
		\label{Fig(ex_of_C)}
	\end{figure}
	\item \label{Item(GC_1,1_satisfies_C)}
	$\GC_{1,1}(X)$ for \emph{any} 
	$3$-valent plane graph $X$ also always satisfies
	(C-i), (C-ii) and (C-iii); 
	indeed, we just have to color 
	in accordance with the rule 
	shown in Figure~\ref{Fig(coloring_for_GC_11)}. 
	\begin{figure}[htbp]
		\centering
		\includegraphics[height=3.5cm]%
		{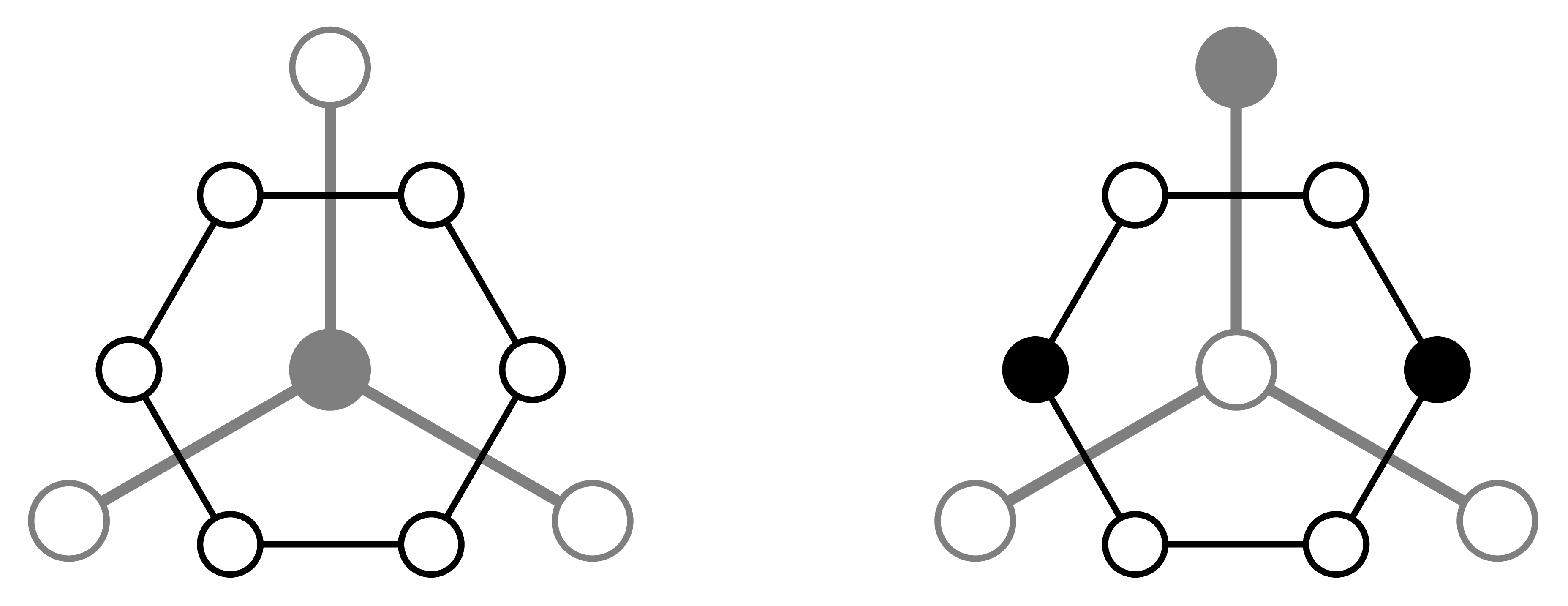}
		\caption{%
		Colorings for the $(1,1)$-cluster 
		around a black (gray in this figure) vertex 
		of $X$ (left) and a white one of $X$ (right). 
		(The gray graphs represent for $X$, 
		while black ones for $\GC_{1,1}(X)$.\ )%
		}
		\label{Fig(coloring_for_GC_11)}
	\end{figure}
	\item 
	The cube satisfies (C-i) and (C-ii) 
	but does not satisfy (C-iii) 
	(see Figure~\ref{Fig(ex_of_C)}), nor, of course, (F). 
	\item 
	The dodecahedron satisfies 
	none of the conditions above. 
\end{enumerate}
\end{Exs}
%
\section{Two comparisons of the eigenvalues}
\label{Sec(comp_of_ev)}
In this section 
we give two kinds of comparisons of the eigenvalues, 
one is that between the eigenvalues of $X$ 
and those of $\GC_{k,l}(X)$, 
and the other is that between 
the eigenvalues of the $(k,0)$-cluster 
and those of $\GC_{k,0}(X)$. 
The former comparison provides the proof of 
Theorems~\ref{Thm(comp_with_X)}, 
and the latter is used in the proof of 
Theorem~\ref{Thm(o(k^2)_evs)}. 
Throughout this section, let $k$ and $l$ be 
integers satisfying $k\geq l\geq 0$ and $k\neq 0$ 
in consideration of 
Proposition~\ref{Prop(properties_of_GC)}%
~(\ref{Item(isom_class_for_GC)}). 
\subsection{The case where $X$ is $3$-valent}
\begin{proof}[Proof of\/ 
{\upshape(\ref{Eq(lambda(GC(X))<lambda(X))})} and 
{\upshape(\ref{Eq(lambda(GC(X))>6-lambda(X))})} in
Theorem~{\upshape\ref{Thm(comp_with_X)}}]
Let us denote by $V(p):=V(\triangle_{k,l}(p))$ 
for a fixed pair $(k,l)$. 
Let $p\in V(X)$, $q\in N_X(p)$ and set
\begin{equation}
	\begin{aligned}
		V_0(p)
		:={}&
		\left\{
		x\in V(p)\setmid
		N_{X'}(x)\subseteq V(p)
		\right\},
		\\
		V_i^q(p)
		:={}&
		\left\{
		x\in V(p)\setmid
		\card{N_{X'}(x)\cap V(q)}=i
		\right\}
		\quad
		(i=1,2).
	\end{aligned}
	\label{Eq(V_0(p)&V_i^q(p))}
\end{equation}
Note that, for any $x\in V(p)$ and $q\in N_X(p)$, 
there are at most two edges emanating 
from $x$ to $V(q)$. 
Since there is nothing to discuss 
when $(k,l)=(1,0)$, we only consider the other cases. 
Let $c=1/\sqrt{\card{V(p)}}=1/\sqrt{k^2+kl+l^2}$ and 
define a linear map 
$Q\colon \mathbb{C}^{V(X)}\rightarrow \mathbb{C}^{V(X')}$ 
for $f\in \mathbb{C}^{V(X)}$ and for $x\in V(p)$ by
\begin{equation}
	(Qf)(x):=cf(p).
\label{Eq(Qf=cuf)}
\end{equation}
The transpose 
$\transp{Q}\colon \mathbb{C}^{V(X')}
\rightarrow \mathbb{C}^{V(X)}$ 
of $Q$ is then written as
\[
	(\transp{Q}g)(p)
	=c\sum_{x\in V(p)}g(x)
\]
for $g\in \mathbb{C}^{V(X')}$ and $p\in V(X)$. 
It then follows that for any $f\in \mathbb{C}^{V(X)}$ 
and for any $p\in V(X)$, 
\[
	(\transp{Q}Qf)(p)
	=c\sum_{x\in V(p)}(Qf)(x)
	=c^2\sum_{x\in V(p)}f(p)
	=f(p),
\]
that is $\transp{Q}Q=\id_{\mathbb{C}^{V(X)}}$. 
Also, for arbitrary $f\in \mathbb{C}^{V(X)}$,
\begin{align*}
	(\transp{Q}\Delta_{X'}Qf)(p)
	={}&
	c\sum_{x\in V(p)}(\Delta_{X'}Qf)(x)
	\\
	={}&
	c\sum_{x\in V(p)}
	\left\{
	3(Qf)(x)-\sum_{y\in N_{X'}(x)}(Qf)(y)
	\right\}
	\\
	={}&
	3c^2\card{V(p)}f(p)
	-c\sum_{x\in V_0(p)}\sum_{y\in N_{X'}(x)}(Qf)(y)
	-c\sum_{x\in V(p)\setminus V_0(p)}
	\sum_{y\in N_{X'}(x)}(Qf)(y).
\end{align*}
The second term equals $-3c^2\card{V_0(p)}f(p)$ 
and the third term is computed as
\begin{align*}
	c\sum_{x\in V(p)\setminus V_0(p)}
	\sum_{y\in N_{X'}(x)}(Qf)(y)
	={}&
	c\sum_{q\in N_X(p)}
	\sum_{x\in V_1^q(p)}
	\sum_{y\in N_{X'}(x)}(Qf)(y)
	+c\sum_{q\in N_X(p)}
	\sum_{x\in V_2^q(p)}
	\sum_{y\in N_{X'}(x)}(Qf)(y)
	\\
	={}&
	c^2\sum_{q\in N_X(p)}
	\card{V_1^q(p)}
	\left(2f(p)+f(q)\right)
	+c^2\sum_{q\in N_X(p)}
	\card{V_2^q(p)}
	\left(f(p)+2f(q)\right)
	\\
	={}&
	3c^2(2\card{V_1^q(p)}+\card{V_2^q(p)})f(p)
	+c^2(\card{V_1^q(p)}+2\card{V_2^q(p)})
	\sum_{q\in N_X(p)}f(q),
\end{align*}
where the last equality follows from 
the symmetry of $\triangle_{k,l}(p)$. 
Therefore we obtain
\begin{align*}
	(\transp{Q}\Delta_{X'}Qf)(p)
	={}&
	c^2(\card{V_1^q(p)}+2\card{V_2^q(p)})
	(\Delta_Xf)(p)
	\\
	={}&
	\frac{\mu(k,l)}{k^2+kl+l^2}(\Delta_Xf)(p),
\end{align*}
where $\mu(k,l)$ is the number of edges in $X'$ 
connecting two clusters 
and depends only on $k$ and $l$. 
It is easily proved that 
$\mu(k,0)=k$ and $\mu(k,k)=3k$. 
To estimate $\mu(k,l)$ when $k>l>0$, 
let us estimate the number of edges 
crossing the edge $E=0z$. 
Notice first that there is at most one crossing edge 
emanating from an upward triangle $\triangle(a,b)$, 
and that there are at most two crossing edge 
emanating from a downward triangle $\dtriangle(a,b)$. 
For $c\in \mathbb{Z}$, 
``the zigzag path'' 
which is obtained by joining the barycenters 
of $\dtriangle(a,b)$, $\triangle(a,b)$ 
and $\dtriangle(a+1,b-1)$ 
for all $a,b\in \mathbb{Z}$ with $a+b=c$ 
crosses the edge $E=0z$ exactly once 
provided $0\leq c\leq k+l-1$ 
and does not cross $E$ otherwise. 
Also, the line passing through $a\in \mathbb{Z}$ 
with slant $1+\omega$ 
crosses $E$ exactly once 
provided $0\leq a\leq k-l$ 
and does not cross $E$ otherwise. 
Therefore the number of edges crossing $E$ is at most 
$k+l+(k-l-2)=2k-2$
(see Figure~\ref{Fig(mu(k,l)_example)} 
for an example).
\par
(\ref{Eq(lambda(GC(X))<lambda(X))}) 
of Theorem~\ref{Thm(comp_with_X)} 
now immediately follows from the following.
\begin{Thm}[Interlacing property, 
see for example \cite{MR2571608}]
\label{Thm(interlacing)}
Let $Q$ be a real $n\times m$ matrix satisfying 
$\transp{Q}Q=I_m$ 
and $A$ be a real symmetric $n\times n$ matrix. 
If the eigenvalues of $A$ and $\transp{Q}AQ$ are
\[
	\nu_1(A)\leq \nu_2(A)\leq \dots \leq \nu_n(A),
	\quad
	\nu_1(\transp{Q}AQ)\leq \nu_2(\transp{Q}AQ)\leq 
	\dots \leq \nu_m(\transp{Q}AQ),
\]
respectively, then
\[
	\nu_j(A)
	\leq \nu_j(\transp{Q}AQ)
	\leq \nu_{n-m+j}(A)
	\quad
	(j=1,2,\dots,m).
\]
\end{Thm}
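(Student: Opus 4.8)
The plan is to obtain both inequalities from the Courant--Fischer min--max description of the eigenvalues of a real symmetric matrix, exploiting the fact that the hypothesis $\transp{Q}Q=I_m$ makes $Q\colon\mathbb{R}^m\to\mathbb{R}^n$ an isometric embedding. Concretely, set $B:=\transp{Q}AQ$, which is a real symmetric $m\times m$ matrix, and note that $\transp{Q}Q=I_m$ forces $Q$ to be injective and to satisfy $\langle Qy,Qy\rangle=\langle y,y\rangle$ for all $y\in\mathbb{R}^m$. Hence, for every nonzero $y$, the Rayleigh quotients agree,
\[
	\frac{\langle By,y\rangle}{\langle y,y\rangle}
	=\frac{\langle AQy,Qy\rangle}{\langle Qy,Qy\rangle},
\]
and for any subspace $S\subseteq\mathbb{R}^m$ the image $QS\subseteq\mathbb{R}^n$ is a subspace of the same dimension on which the Rayleigh quotients of $A$ reproduce those of $B$ on $S$.

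First I would prove the left-hand inequality $\nu_j(A)\le\nu_j(B)$. By Courant--Fischer, $\nu_j(B)=\min_{\dim S=j}\ \max_{0\ne y\in S}\langle By,y\rangle/\langle y,y\rangle$. Picking a $j$-dimensional subspace $S$ attaining this minimum and setting $U:=QS$, the identity above gives $\nu_j(B)=\max_{0\ne x\in U}\langle Ax,x\rangle/\langle x,x\rangle$; since $U$ has dimension $j$, this is at least $\min_{\dim U'=j}\max_{0\ne x\in U'}\langle Ax,x\rangle/\langle x,x\rangle=\nu_j(A)$, as claimed.

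Then the right-hand inequality $\nu_j(B)\le\nu_{n-m+j}(A)$ I would deduce by applying the inequality just proved to the pair $(-A,Q)$, noting $-B=\transp{Q}(-A)Q$. Since the eigenvalues of $-A$ and $-B$ are $\nu_i(-A)=-\nu_{n-i+1}(A)$ and $\nu_i(-B)=-\nu_{m-i+1}(B)$, the inequality $\nu_i(-A)\le\nu_i(-B)$ becomes $\nu_{m-i+1}(B)\le\nu_{n-i+1}(A)$; setting $i=m-j+1$ gives exactly $\nu_j(B)\le\nu_{n-m+j}(A)$. Equivalently, one can argue directly with the max--min form of Courant--Fischer: an $(m-j+1)$-dimensional subspace of $\mathbb{R}^m$ realizing $\nu_j(B)$ pushes forward under $Q$ to an $(m-j+1)$-dimensional subspace of $\mathbb{R}^n$, which is one of the competitors in the max--min formula for $\nu_{n-m+j}(A)$.

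I do not expect any genuine obstacle: the statement is classical (cf.\ \cite{MR2571608}). The only points needing care are to record that $\transp{Q}Q=I_m$ alone gives injectivity and norm-preservation of $Q$ (so that push-forward of a subspace preserves its dimension and its Rayleigh quotients), and to keep the index bookkeeping of Courant--Fischer consistent --- a $d$-dimensional subspace enters the min--max formula for $\nu_j$ with $d=j$ and the max--min formula for $\nu_k$ with $d=n-k+1$ --- which is exactly what produces the shift $j\mapsto n-m+j$ in the upper bound.
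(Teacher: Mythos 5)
Your proof is correct and complete. The paper does not actually prove this theorem --- it is quoted as a known result with a citation to \cite{MR2571608} --- and your argument is the standard Courant--Fischer proof of Cauchy-type interlacing for a compression $\transp{Q}AQ$ with $\transp{Q}Q=I_m$: the observation that $Q$ is an isometric embedding identifying Rayleigh quotients of $B=\transp{Q}AQ$ on $S$ with those of $A$ on $QS$, the min--max comparison for the lower bound, and the reduction of the upper bound to the lower bound via $A\mapsto -A$ (with the index shift $j\mapsto n-m+j$ handled correctly) are all sound.
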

\begin{figure}[htbp]
	\centering
	\includegraphics[width=.5\textwidth]%
	{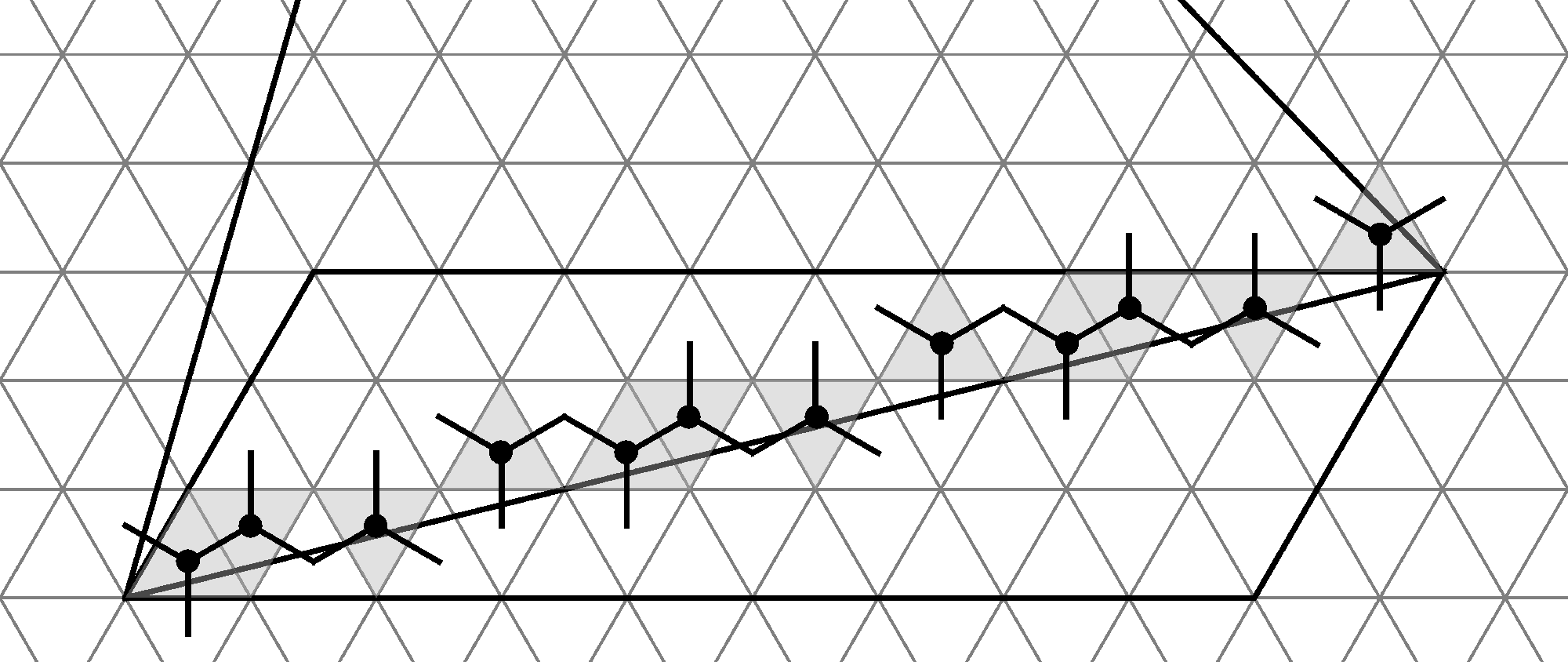}
	\caption{$(k,l)=(9,3)$. 
	$15$ edges cross the edge $E=0z$, ($z=9+3\omega$). }
	\label{Fig(mu(k,l)_example)}
\end{figure}
(\ref{Eq(lambda(GC(X))>6-lambda(X))}) 
is an immediate consequence from 
(\ref{Eq(lambda(GC(X))<lambda(X))}) and 
Proposition~\ref{Prop(GC_bipartite)}.
\end{proof}
The eigenvalues of $\GC_{k,0}(X)$ 
are estimated, 
independently of the graph structure of $X$, 
also by those of the $(k,0)$-cluster as follows. 
\begin{Thm}
\label{Thm(comp_with_3-vlnt_clstr)}
Let $X$ be a $3$-valent graph 
satisfying the same assumptions 
as in Theorem~{\upshape\ref{Thm(comp_with_X)}}, 
and $\nu_1(k)\leq \nu_2(k)\leq \cdots\leq \nu_{k^2}(k)$ 
{\upshape(}resp.\ 
$0=\lambda_1(k)\leq \lambda_2(k)\leq
\cdots\leq \lambda_{k^2}(k)${\upshape)} 
be the eigenvalues of the adjacency matrix 
{\upshape(}resp.\ of the Laplacian{\upshape)} 
of the $3$-valent $(k,0)$-cluster. 
Then for $j=1,2,\dots,k^2$,
\begin{gather}
	\lambda_j(\GC_{k,0}(X))
	\leq
	3-\nu_{k^2-j+1}(k),
	\label{Eq(lambda(X_k)<3-lambda(C_k))}
	\\
	\lambda_{\card{V(\GC_{k,0}(X))}-j+1}(\GC_{k,0}(X))
	\geq
	3-\nu_j(k).
	\label{Eq(lambda(X_k)>3-lambda(C_k))}
\end{gather}
Moreover, we have
\begin{gather}
	\lambda_i(\GC_{k,0}(X))
	\leq
	\lambda_t(k)+\delta_{k^2-t+i}(k),
	\quad
	\text{for $1\leq i\leq t\leq k^2$},
	\label{Eq(lambda(X_k)<lambda(C_k)+delta)}
	\\
	\lambda_{\card{V(\GC_{k,0}(X))}-j+1}(\GC_{k,0}(X))
	\geq
	\lambda_{k^2-s+1}(k)+\delta_{1+s-j}(k),
	\quad
	\text{for $1\leq j\leq s\leq k^2$},
	\label{Eq(lambda(X_k)>lambda(C_k)+delta)}
\end{gather}
where $\delta_j(k)$ is given as
\[
	\delta_j(k)
	=\left\{
	\begin{aligned}
		& 0,
		& & \text{for $j=1,2,\dots,k^2-3k+3$},
		\\
		& 1,
		& & \text{for $j=k^2-3k+4,\dots,k^2-3$},
		\\
		& 2,
		& & \text{for $j=k^2-2,k^2-1,k^2$}
	\end{aligned}
	\right.
\]
\end{Thm}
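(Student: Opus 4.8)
The plan is to exploit the block structure of $\GC_{k,0}(X)$ as being built out of one copy of the $(k,0)$-cluster $C_k=\triangle_{k,0}(p)$ for each $p\in V(X)$, glued along $\mu(k,0)=k$ edges between adjacent clusters. Write $A'$ for the adjacency matrix of $X'=\GC_{k,0}(X)$; since every vertex has degree $3$, we have $\Delta_{X'}=3I-A'$, and similarly $\Delta_{C_k}$ differs from $A_{C_k}$ only at the boundary vertices (those with fewer than $3$ neighbours inside $C_k$). The key observation is that $A'$ decomposes as $A'=A_0+B$, where $A_0=\bigoplus_{p\in V(X)}A_{C_k}$ is the block-diagonal ``intra-cluster'' part and $B$ is the ``inter-cluster'' part collecting, for each edge $e=\{p,q\}$ of $X$, the $k$ edges joining $V(\triangle_{k,0}(p))$ to $V(\triangle_{k,0}(q))$.

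First I would prove the crude bounds \eqref{Eq(lambda(X_k)<3-lambda(C_k))} and \eqref{Eq(lambda(X_k)>3-lambda(C_k))} by applying the interlacing theorem (Theorem~\ref{Thm(interlacing)}) in the form used earlier: take $Q$ to be the natural isometric embedding $\mathbb{C}^{V(C_k)}\hookrightarrow \mathbb{C}^{V(X')}$ sending a function on the cluster to the function supported on one fixed cluster $\triangle_{k,0}(p_0)$ and zero elsewhere (this has $\transp{Q}Q=I$). Then $\transp{Q}A_{X'}Q=A_{C_k}$ because no edge of $B$ returns to the same cluster, so interlacing gives $\nu_j(A_{C_k})\le \nu_{N-n+j}(A_{X'})$ and $\nu_j(A_{X'})\le \nu_{j}(A_{C_k})$ appropriately; translating via $\Delta=3I-A$ (which reverses the order of eigenvalues) yields the two displayed inequalities. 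Here $N=\card{V(X')}=k^2\card{V(X)}$ and $n=k^2$.

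The sharper bounds \eqref{Eq(lambda(X_k)<lambda(C_k)+delta)} and \eqref{Eq(lambda(X_k)>lambda(C_k)+delta)} require controlling the perturbation $B$ quantitatively. The point is that $B$ is a very sparse symmetric matrix: for each of the $k$ boundary vertices on each of the three sides of a cluster, $B$ contributes at most one nonzero entry per endpoint, and—crucially—Lemma~\ref{Lem(through_barycenter)} together with the structure of the $(k,0)$-cluster pins down \emph{exactly how many} cluster vertices have an outgoing inter-cluster edge: there are $3k$ boundary vertices, of which the $3$ ``corner'' vertices may behave differently, giving the three regimes $k^2-3k+3$, then up to $k^2-3$, then the top $3$ that appear in the definition of $\delta_j(k)$. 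Concretely, I would show that $B$ has rank at most $3k-3$ in a form that lets one split off a rank-$(k^2-3k+3)$, a rank-$3$, and an intermediate piece; more precisely, one writes $A_{X'}=A_0+B$ and applies Weyl-type / Cauchy interlacing for rank-$r$ perturbations: if $B$ has at most $r$ nonzero eigenvalues then $\nu_{i}(A_0+B)\le \nu_{i+r'}(A_0)$ where $r'$ counts the positive ones, etc. The function $\delta_j(k)$ is exactly the bookkeeping of how the boundary vertices of a single cluster split into the three types under the $D_3$ symmetry of $\triangle_{k,0}(p)$: $k^2-3k+3$ interior-like vertices contributing $0$, the three corners contributing $2$ each in the worst case, and the remaining $3k-6$ side vertices contributing $1$. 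One then combines this with the eigenvalues of $C_k$ alone (which are computed explicitly in Section~\ref{Sec(ev_of_clstr)}, hence available).

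The main obstacle will be step three: making the rank/perturbation bookkeeping for $B$ rigorous and matching it precisely to the piecewise-constant $\delta_j(k)$. One must argue that when passing from the block-diagonal $A_0=\bigoplus_{p}A_{C_k}$ (whose spectrum is the multiset union of $\card{V(X)}$ copies of $\mathrm{spec}(A_{C_k})$) to $A_{X'}=A_0+B$, the $i$-th eigenvalue moves up by at most $\delta_{k^2-t+i}(k)$ when compared against the $t$-th cluster eigenvalue $\lambda_t(k)$ — this is a simultaneous use of the interlacing embedding $Q$ (to get $\lambda_t(k)$ into the picture as an eigenvalue of $A_0$ with high multiplicity) and a bound on the number of eigenvalues of $B$ exceeding a given threshold. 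I expect the cleanest route is to bound $B$ by a positive semidefinite matrix $B^+$ supported on the boundary vertices with $\mathrm{rank}\,B^+$ controlled by Lemma~\ref{Lem(through_barycenter)}, deduce $A_{X'}\preceq A_0+B^+$, and then apply Cauchy interlacing to $A_0+B^+$; the three-step form of $\delta_j(k)$ then falls out of the fact that the boundary of $\triangle_{k,0}(p)$ decomposes, under $D_3$, into corner vertices (degree deficit $2$, three of them) and edge vertices (degree deficit $1$), with the interior vertices contributing nothing — and that $\mu(k,0)=k$ edges cross each side. The $\geq$ statement \eqref{Eq(lambda(X_k)>lambda(C_k)+delta)} is then obtained by the same argument applied to $-A_{X'}=-A_0-B$, i.e.\ bounding $B$ below by $-B^-$ with $B^-\succeq 0$ of the same controlled rank, and re-expressing everything through $\Delta_{X'}=3I-A_{X'}$.
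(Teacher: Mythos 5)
Your treatment of \eqref{Eq(lambda(X_k)<3-lambda(C_k))} and \eqref{Eq(lambda(X_k)>3-lambda(C_k))} is exactly the paper's: extend by zero onto a single cluster, note $\transp{Q}A_{\GC_{k,0}(X)}Q=A_{\triangle(k)}$, interlace, and translate through $\Delta=3I-A$ using $3$-regularity. That part is fine.

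For the refined bounds \eqref{Eq(lambda(X_k)<lambda(C_k)+delta)}--\eqref{Eq(lambda(X_k)>lambda(C_k)+delta)} there is a genuine gap. You correctly locate the source of $\delta_j(k)$ (three corner vertices of $\triangle(k)$ with degree deficit $2$, $3k-6$ side vertices with deficit $1$, $k^2-3k+3$ interior vertices with deficit $0$), but the mechanism you propose --- the global splitting $A_{X'}=A_0+B$ with $A_0=\bigoplus_p A_{\triangle(k)}$ and a rank or threshold control of the inter-cluster part $B$ --- cannot deliver the stated inequality. The inter-cluster graph is a union of paths and cycles, so $B$ has on the order of $k\card{V(X)}$ nonzero eigenvalues spread over $[-2,2]$: interlacing for low-rank perturbations shifts \emph{indices} by $O(k)$, whereas the theorem requires a shift in \emph{value} by at most $\delta\le 2$; and the semidefinite dominations actually available ($-D_B\preceq B\preceq D_B$, $D_B$ the inter-cluster degree matrix, coming from positive semidefiniteness of the Laplacian and the signless Laplacian of the inter-cluster graph) must be invoked on both sides once you pass to $\Delta_{X'}=3I-A_0-B$, which costs $2(3\,\id-D_k)$ per cluster, i.e.\ twice the allowed $\delta_j(k)$. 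You flag this as the main obstacle but do not resolve it, and I do not see how to resolve it along the route you describe.

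The missing observation is that no estimate on $B$ is needed at all: the same compression $Q$ used for the crude bounds satisfies the \emph{identity}
\[
	\transp{Q}\Delta_{\GC_{k,0}(X)}Q
	=\Delta_{\triangle(k)}+\bigl(3\,\id_{\mathbb{C}^{V(\triangle(k))}}-D_k\bigr),
\]
because the off-diagonal inter-cluster entries are annihilated by the compression and only the degree discrepancy survives, on the diagonal. The matrix $3\,\id-D_k$ is diagonal with sorted eigenvalues exactly $\delta_1(k)\le\dots\le\delta_{k^2}(k)$ by your own count, so the Courant--Weyl inequalities applied to this sum of two $k^2\times k^2$ matrices give $\lambda_i(\transp{Q}\Delta_{\GC_{k,0}(X)}Q)\le\lambda_t(k)+\delta_{k^2-t+i}(k)$ for $i\le t$, together with the dual lower bound, and the interlacing you already established converts these into \eqref{Eq(lambda(X_k)<lambda(C_k)+delta)} and \eqref{Eq(lambda(X_k)>lambda(C_k)+delta)}. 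This is the paper's argument; the global decomposition $A_0+B$ is not needed anywhere.
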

\begin{proof}
Let $p\in V(X)$ be fixed and 
let $\triangle(k)=\triangle(p)$ 
be the $(k,0)$-cluster, 
which is considered as a subgraph of $X_k=\GC_{k,0}(X)$. 
Let us define a linear map 
$Q\colon \mathbb{C}^{V(X_k)}\rightarrow
\mathbb{C}^{V(\triangle(k))}$ 
by
\[
	(Qf)(x):=
	\left\{
	\begin{aligned}
		& f(x),
		& & \text{if $x\in V(\triangle(k))$},
		\\
		& 0,
		& & \text{if $x\not\in V(\triangle(k))$}
	\end{aligned}
	\right.
\]
for $f\in \mathbb{C}^{V(X_k)}$ and $x\in V(X_k)$. 
Then a simple computation shows 
$\transp{Q}Q=\id_{\mathbb{C}^{V(X_k)}}$ 
and $\transp{Q}A_{X_k}Q=A_{\triangle(k)}$, 
where $A$'s denote the adjacency matrices. 
By noting that $X_k=\GC_{k,0}(X)$ is 
a $3$-regular graph, 
the interlacing property 
(Theorem~\ref{Thm(interlacing)}) 
proves 
(\ref{Eq(lambda(X_k)<3-lambda(C_k))}) and 
(\ref{Eq(lambda(X_k)>3-lambda(C_k))}). 
Since
\[
	\transp{Q}\Delta_{X_k}Q
	=\Delta_{\triangle(k)}
	+(3\id_{\mathbb{C}^{V(\triangle(k))}}-D_k),
\]
where $D_k\colon \mathbb{C}^{V(\triangle(k))}
\rightarrow \mathbb{C}^{V(\triangle(k))}$ 
is defined as $(Df)(x):=\deg(x)$ 
for $f\in \mathbb{C}^{V(\triangle(k))}$ 
and $x\in V(\triangle(k))$, 
Combining the Courant-Weyl inequality (cf. \cite[Theorem 1.3.15]{MR2571608})
and the interlacing property proves 
(\ref{Eq(lambda(X_k)<lambda(C_k)+delta)}) 
and (\ref{Eq(lambda(X_k)>lambda(C_k)+delta)}). 
\end{proof}
\subsection{The case where $X$ is $4$-valent}
The proof of (\ref{Eq(lambda(GC(X))<lambda(X))}) 
for the $4$-valent case is almost same as 
that for $3$-valent case, and let us omit it. 
The comparison between 
the eigenvalues of a $(k,0)$-cluster 
and those of $\GC_{k,0}(X)$ 
for the $4$-valent case is stated as follows.
\begin{Thm}
\label{Thm(comp_with_4-vlnt_clstr)}
Let $X$ be a $4$-valent graph 
satisfying the same assumptions 
as in Theorem~{\upshape\ref{Thm(comp_with_X)}}, 
and $\nu_1(k)\leq \nu_2(k)\leq \cdots\leq \nu_{k^2}(k)$ 
{\upshape(}resp.\ 
$0=\lambda_1(k)\leq \lambda_2(k)\leq
\cdots\leq \lambda_{k^2}(k)${\upshape)} 
be the eigenvalues of the adjacency matrix 
{\upshape(}resp.\ of the Laplacian{\upshape)} 
of the $4$-valent $(k,0)$-cluster. 
Then for $j=1,2,\dots,k^2$,
\begin{gather*}
	\lambda_j(\GC_{k,0}(X))
	\leq
	4-\nu_{k^2-j+1}(k),
	\\
	\lambda_{\card{V(\GC_{k,0}(X))}-j+1}(\GC_{k,0}(X))
	\geq
	4-\nu_j(k),
\end{gather*}
Moreover, we have
\begin{gather*}
	\lambda_i(\GC_{k,0}(X))
	\leq
	\lambda_t(k)+\delta_{k^2-t+i}(k),
	\quad
	\text{for $1\leq i\leq t\leq k^2$},
	\\
	\lambda_{\card{V(\GC_{k,0}(X))}-j+1}(\GC_{k,0}(X))
	\geq
	\lambda_{k^2-s+1}(k)+\delta_{1+s-j}(k),
	\quad
	\text{for $1\leq j\leq s\leq k^2$},
\end{gather*}
where $\delta_j(k)$ is given as
\[
	\delta_j(k)
	=\left\{
	\begin{aligned}
		& 0,
		& & \text{for $j=1,2,\dots,k^2-4k+4$},
		\\
		& 1,
		& & \text{for $j=k^2-4k+5,\dots,k^2-4$},
		\\
		& 2,
		& & \text{for $j=k^2-3,k^2-2,k^2-1,k^2$}.
		\\
	\end{aligned}
	\right.
\]
\end{Thm}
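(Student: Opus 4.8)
The plan is to follow, almost verbatim, the proof of Theorem~\ref{Thm(comp_with_3-vlnt_clstr)}; the only genuinely new ingredient is a description of the $4$-valent $(k,0)$-cluster, all the analytic steps being transcriptions of the $3$-valent ones. Fix $p\in V(X)$, write $X_k:=\GC_{k,0}(X)$ and $N:=\card{V(X_k)}=k^2\card{V(X)}$. First I would pin down the shape of $\square_{k,0}(p)$: since $l=0$, no barycenter lies on an edge of $\square=\square(0,z,(1+i)z,iz)$ with $z=k$, so $\square_{k,0}(p)$ consists of the barycenters of the $k^2$ small squares contained in $\square$, two of them being joined exactly when the corresponding squares share an edge; hence $\square_{k,0}(p)$ is the $k\times k$ square grid graph. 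Its degree sequence is then immediate: the $4$ corner vertices have degree $2$, the $4(k-2)$ non-corner boundary vertices have degree $3$, and the $(k-2)^2=k^2-4k+4$ interior vertices have degree $4$. Consequently the diagonal operator $4\id-D_k$ on $\mathbb{C}^{V(\square_{k,0}(p))}$, where $D_k$ is the degree operator of $\square_{k,0}(p)$, is positive semidefinite with eigenvalue $0$ of multiplicity $k^2-4k+4$, eigenvalue $1$ of multiplicity $4k-8$, and eigenvalue $2$ of multiplicity $4$; listed in increasing order these are precisely $\delta_1(k)\le\cdots\le\delta_{k^2}(k)$ for the numbers $\delta_j(k)$ in the statement. (For the few small $k$ for which this three-piece description degenerates the claim is immediate, so we may take $k$ large.)

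With this in hand the remainder is routine. Let $Q\colon\mathbb{C}^{V(\square_{k,0}(p))}\to\mathbb{C}^{V(X_k)}$ be extension by zero, so that $\transp{Q}Q=\id$ on $\mathbb{C}^{V(\square_{k,0}(p))}$; since $\square_{k,0}(p)$ is an \emph{induced} subgraph of $X_k$ (every edge of $X_k$ either lies inside a single cluster or joins two distinct clusters), one has $\transp{Q}A_{X_k}Q=A_{\square_{k,0}(p)}$ and, on expanding the Laplacians, $\transp{Q}\Delta_{X_k}Q=\Delta_{\square_{k,0}(p)}+(4\id-D_k)$. Because $X_k$ is $4$-regular, $\Delta_{X_k}=4\id-A_{X_k}$, so applying the interlacing property (Theorem~\ref{Thm(interlacing)}) to $A_{X_k}$ and this $Q$ and translating back through $\lambda_j(\GC_{k,0}(X))=4-\nu_{N-j+1}(A_{X_k})$ gives the two adjacency-type inequalities, where $\nu_1(k)\le\cdots\le\nu_{k^2}(k)$ are the eigenvalues of $A_{\square_{k,0}(p)}$. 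For the last two inequalities I would combine the interlacing property applied to $A=\Delta_{X_k}$ --- which yields $\lambda_j(\GC_{k,0}(X))\le\lambda_j(\transp{Q}\Delta_{X_k}Q)$ and $\lambda_j(\transp{Q}\Delta_{X_k}Q)\le\lambda_{N-k^2+j}(\GC_{k,0}(X))$ for $1\le j\le k^2$ --- with the Courant--Weyl inequality (cf.\ \cite[Theorem~1.3.15]{MR2571608}) applied to the splitting $\transp{Q}\Delta_{X_k}Q=\Delta_{\square_{k,0}(p)}+(4\id-D_k)$; the resulting upper and lower bounds are $\lambda_t(k)+\delta_{k^2-t+i}(k)$ and $\lambda_{k^2-s+1}(k)+\delta_{1+s-j}(k)$, and a routine check shows that the index ranges for which Weyl's inequality applies are exactly $1\le i\le t\le k^2$ and $1\le j\le s\le k^2$.

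I do not expect a real obstacle: every analytic manipulation merely repeats the $3$-valent argument, so the sole nontrivial --- though elementary --- verification is the grid description of the $4$-valent $(k,0)$-cluster together with the corner/side/interior degree count that fixes the multiplicities $k^2-4k+4$, $4k-8$, $4$ entering $\delta_j(k)$. Accordingly, in the write-up I would state these structural facts, record the identities $\transp{Q}A_{X_k}Q=A_{\square_{k,0}(p)}$ and $\transp{Q}\Delta_{X_k}Q=\Delta_{\square_{k,0}(p)}+(4\id-D_k)$, and for the interlacing and Courant--Weyl bookkeeping refer to the proof of Theorem~\ref{Thm(comp_with_3-vlnt_clstr)}.
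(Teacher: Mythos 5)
Your proposal is correct and follows exactly the route the paper intends: the paper omits this proof by declaring it "almost same as that for the $3$-valent case," and your argument is precisely that transcription (extension-by-zero $Q$ with $\transp{Q}A_{X_k}Q=A_{\square(k)}$ and $\transp{Q}\Delta_{X_k}Q=\Delta_{\square(k)}+(4\id-D_k)$, then interlacing plus Courant--Weyl), supplemented by the one genuinely new verification --- the $k\times k$ grid description of the cluster and the corner/side/interior degree count $4,\,4(k-2),\,(k-2)^2$ that produces the stated $\delta_j(k)$. No gaps.
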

Since the proof of 
this theorem is again almost same as 
that for the $3$-valent case, 
let us omit it.
%
\section{Eigenvalues of the $(k,0)$-cluster}
\label{Sec(ev_of_clstr)}
In this section we shall find all the eigenvalues 
of a $(k,0)$-cluster to prove 
Theorem~\ref{Thm(o(k^2)_evs)}. 
Since the $(k,0)$-clusters are, as abstract graphs, 
isomorphic to each other, 
fixing a vertex $p\in V(X)$, 
we may denote it by 
$\triangle(k):=\triangle_{k,0}(p)
=\overline{\triangle}_{k,0}(p)$ 
or 
$\square(k):=\square_{k,0}(p)
=\overline{\square}_{k,0}(p)$. 
\subsection{The case where $X$ is $3$-valent}
\begin{Def}
\label{Def(D_3-inv&alt_ev)}
$\lambda\geq 0$ is called a 
\emph{$D_3$-invariant eigenvalue} 
(resp.\ \emph{$D_3$-alternating eigenvalue})
for a $(k,0)$-cluster $\triangle(k)$ 
if there exists a non-zero function 
$u\colon V(\triangle(k))\rightarrow \mathbb{C}$, called a 
\emph{$D_3$-invariant eigenfunction}
(resp.\ \emph{$D_3$-alternating eigenfunction}), 
with the following properties. 
\begin{enumerate}[label=(\roman*),ref=\roman*]
	\item 
	$u$ solves $(\Delta_{\triangle(k)}u)(x)=\lambda u(x)$ 
	for $x\in V(\triangle(k))$
	\item 
	$u(\sigma x)=u(x)$
	(resp.\ $u(\sigma x)=\sgn(\sigma)u(x)$)
	for $x\in V(\triangle(k))$, 
	where 
	$\sigma\colon \triangle(k)\rightarrow \triangle(k)$ 
	is an element of the dihedral group $D_3$
	and $\sgn(\sigma)$ denotes its signature.
\end{enumerate}
\end{Def}
\begin{Rems}
\label{Rem(inv&alt)}
(1) The following remark shall be repeatedly used 
in the sequel: 
by assigning the same function $u$ 
to the other clusters, 
we have a global function 
$u\colon \GC_{k,0}(X)\rightarrow \mathbb{C}$, 
which is an eigenfunction of $\Delta_{\GC_{k,0}(X)}$ 
with eigenvalue $\lambda$; indeed, 
(i) $\Delta_{\triangle(k)}u=\lambda u$ 
is equivalent to a Neumann problem:
\begin{equation}
	\left\{
	\begin{aligned}
		& (\Delta_{\GC_{k,0}(X)}u)(x)
		=\lambda u(x),
		& & \text{for $x\in V(p)$},
		\\
		& u(y)-u(x)=0,
		& & \text{for $x\in V(p)\setminus V_0(p)$ 
		and 
		$y\in N_{\GC_{k,0}(X)}(x)\setminus V(p)$}
	\end{aligned}
	\right.
	\label{Eq(Neumann_prob)}
\end{equation}
for some/any $p\in V(X)$.
\par
(2) There is no eigenfunction on a $(k,0)$-cluster
which is both $D_3$-invariant and $D_3$-alternating.
\end{Rems}
Our first task is to find 
all the $D_3$-invariant eigenspaces, 
which proves Theorem~\ref{Thm(arbitrary_lambda)} 
as well as (\ref{Eq(lambda(GC(X))>6-e)}) 
in Theorem~\ref{Thm(comp_with_X)}. 
To this end, 
let us first construct all the eigenfunctions 
on a hexagonal lattice with toroidal boundary condition. 
If we set $\cm:=(1+\omega)/3$, 
where $\omega=e^{\pi i/3}$, then the discrete set
\begin{equation}
	\left\{
	a+b\omega
	\setmid
	a,b\in \mathbb{Z}
	\right\}
	\cup
	\left\{
	\cm+a+b\omega
	\setmid
	a,b\in \mathbb{Z}
	\right\}
	\label{Eq(hex_coord)}
\end{equation}
is naturally regarded as a hexagonal lattice. 
For a fixed $k \in \mathbb{N}$, 
let us consider the equations
\begin{equation}
	\begin{aligned}
		& 3v(a+b\omega)
		-v(\cm+a+b\omega)
		-v(\cm+a-1+b\omega)
		-v(\cm+a+(b-1)\omega)
		=\lambda v(a+b\omega),
		\\
		& 3v(\cm+a+b\omega)
		-v(a+b\omega)
		-v(a+1+b\omega)
		-v(a+(b+1)\omega)
		=\lambda v(\cm+a+b\omega)
	\end{aligned}
	\label{Eq(eigen_eq_on_P(k))}
\end{equation}
for a function $v$ on the parallelogram
\[
	P(k)
	:=
	\left\{
	a+b\omega
	\setmid
	0\leq a,b\leq k-1
	\right\}
	\cup
	\left\{
	\cm+a+b\omega
	\setmid
	0\leq a,b\leq k-1
	\right\},
\]
where $a$ and $b$ in (\ref{Eq(eigen_eq_on_P(k))}) 
are considered modulo $k$, such as
\[
	3v(0)-v(\cm)-v(\cm+k-1)-v(\cm+(k-1)\omega)
	=\lambda v(0)
\]
for the former equation of (\ref{Eq(eigen_eq_on_P(k))}) 
with $a=b=0$. 
So if $v$ solves (\ref{Eq(eigen_eq_on_P(k))}), 
then it gives an eigenfunction with eigenvalue $\lambda$ 
on the finite $3$-valent graph $T(k)$ with $2k^2$ vertices 
obtained by adding edges between 
$a$ and $\cm+a+(k-1)\omega$, 
and between $b\omega$ and $\cm+k-1+b\omega$ 
for each $a,b=0,1,\dots,k-1$.
\par
A simple computation shows that 
an eigenvalue is of the form
\begin{equation}
	\lambda_{s,t}^{\pm}
	=\lambda_{s,t}^{\pm}(k)
	=
	3\pm\sqrt{%
	3+2\cos\frac{2\pi s}{k}
	+2\cos\frac{2\pi t}{k}
	+2\cos\frac{2\pi (s-t)}{k}
	},
	\label{Eq(3-vlnt_lambda_st^pm)}
\end{equation}
whose corresponding eigenfunction is given as
\[
	\left\{
	\begin{aligned}
		v_{s,t}^{\pm}(a+b\omega)
		={}&
		e^{2\pi i(sa+tb)/k},
		\\
		v_{s,t}^{\pm}(\cm+a+b\omega)
		={}&
		\frac{1}{3-\lambda_{s,t}^{\pm}}
		v_{s,t}^{\pm}(a+b\omega)
		\left(
		1+e^{2\pi is/k}+e^{2\pi it/k}
		\right)
	\end{aligned}
	\right.
\]
($a+b\omega,\cm+a+b\omega\in P(k)$) 
for $s,t=0,1,\dots,k-1$, 
unless $\lambda_{s,t}^{\pm}=3$. 
If $\lambda_{s,t}^{\pm}=3$, 
which is possible only if $k\equiv 0\pmod 3$ 
and either $(s,t)=(k/3,2k/3)$ or $(2k/3,k/3)$ 
among the range $0\leq s,t\leq k-1$, then
\[
	\left\{
	\begin{aligned}
		v_{k/3,2k/3}(a+b\omega)
		={}&
		\alpha e^{2\pi i(a+2b)/3},
		\\
		v_{k/3,2k/3}(\cm+a+b\omega)
		={}&
		\alpha' e^{2\pi i(a+2b)/3},
	\end{aligned}
	\right.
	\quad
	\left\{
	\begin{aligned}
		v_{2k/3,k/3}(a+b\omega)
		={}&
		\beta e^{2\pi i(2a+b)/3},
		\\
		v_{2k/3,k/3}(\cm+a+b\omega)
		={}&
		\beta' e^{2\pi i(2a+b)/3},
	\end{aligned}
	\right.
\]
where $\alpha,\alpha',\beta,\beta'\in \mathbb{C}$ 
are arbitrary, 
both define eigenfunctions for the eigenvalue $3$.
\par
We now consider the following three maps 
defined on the hexagonal lattice:
\begin{itemize}
	\item 
	the rotation around $(k-1)(1+\omega)/3$ 
	by $2\pi/3$: 
	$\left\{
	\begin{aligned}
		& a+b\omega\mapsto k-a-b-1+a\omega,
		\\
		& \cm+a+b\omega\mapsto \cm+k-a-b-2+a\omega,
	\end{aligned}
	\right.$
	\item 
	the reflection along the long diagonal line 
	of $P(k)$: 
	$\left\{
	\begin{aligned}
		& a+b\omega\mapsto b+a\omega,
		\\
		& \cm+a+b\omega\mapsto \cm+b+a\omega,
	\end{aligned}
	\right.$
	\item 
	and the reflection along the short one: 
	$\left\{
	\begin{aligned}
		& a+b\omega\mapsto \cm+k-b-1+(k-a-1)\omega,
		\\
		& \cm+a+b\omega\mapsto k-b-1+(k-a-1)\omega.
	\end{aligned}
	\right.$
\end{itemize}
These maps define, 
by considering $a$ and $b$ modulo $k$, 
automorphisms on $T(k)$, 
and generate the dihedral group $D_6$ of order $12$. 
As is easily confirmed, 
taking the average 
$\sum_{\sigma\in D_6}\sigma f$ 
(resp.\ $\sum_{\sigma\in D_6}\sgn(\sigma)\sigma f$) 
for $f\in \mathbb{C}^{P(k)}$ defines 
a projection onto the $D_3$-invariant eigenspaces 
(resp.\ $D_3$-alternating eigenspaces)
for the $(k,0)$-cluster, 
where $\sgn(\sigma)$ is the number modulo $2$ of 
the reflections along 
the \emph{long} diagonal line of $P(k)$ 
in an expression of $\sigma$. 
Now we set, for $s,t=0,1,\dots,k-1$,
\begin{equation}
	u_{s,t}^{\pm}
	:=
	\sum_{\sigma\in D_6}\sigma v_{s,t}^{\pm}
	\quad
	\text{and}%
	\quad
	w_{s,t}^{\pm}
	:=
	\sum_{\sigma\in D_6}\sgn(\sigma)\sigma v_{s,t}^{\pm},
	\label{Eq(u_st^pm&w_st^pm)}
\end{equation}
which respectively give a $D_3$-invariant eigenfunction 
and a $D_3$-alternating eigenfunction on $\triangle(k)$ 
unless they identically vanish on $\triangle(k)$. 
Note that these functions respectively generate 
the space of $D_3$-invariant eigenfunctions 
and the one of $D_3$-alternating eigenfunctions 
because they define functions also on $T(k)$. 
The following Lemma~\ref{Lem(all_inv&alt_evs)} 
explicitly tells us when 
$u_{s,t}^{\pm}$ and $w_{s,t}^{\pm}$ vanish.
\par
\begin{Lem}
\label{Lem(all_inv&alt_evs)}
Let $0\leq s,t\leq k-1$. 
$u_{s,t}^{\pm}\equiv 0$ if and only if 
$u_{s,t}^{\pm}$ is one of the following:
\begin{itemize}[itemsep=5pt]
	\item 
	$u_{s,k-s}^{+}$ or $u_{k-s,s}^{+}$ 
	for $1\leq s<k/3$;
	\item 
	$u_{s,k-s}^{-}$ 
	for $k/3<s<2k/3$;
	\item 
	$u_{s,2s}^{+}$ or $u_{2s,s}^{+}$ 
	for $0\leq s<k/3$;
	\item 
	$u_{s,2s}^{-}$ or $u_{2s,s}^{-}$ 
	for $k/3<s<k/2$;
	\item 
	$u_{s,2s-k}^{-}$ or $u_{2s-k,s}^{-}$ 
	for $k/2\leq s<2k/3$;
	\item 
	$u_{s,2s-k}^{+}$ or $u_{2s-k,s}^{+}$ 
	for 
	$2k/3<s\leq k-1$.
\end{itemize}
On the other hand, 
$w_{s,t}^{\pm}\equiv 0$ if and only if 
$w_{s,t}^{\pm}$ is one of the following:
\begin{itemize}[itemsep=5pt]
	\item 
	$w_{s,0}^{\pm}$ or $w_{0,t}^{\pm}$ 
	for $0\leq s,t\leq k-1$;
	\item 
	$w_{s,s}^{\pm}$ 
	for $0\leq s\leq k-1$;
	\item 
	$w_{s,k-s}^{+}$ or $w_{k-s,s}^{+}$ 
	for $1\leq s<k/3$
	\item 
	$w_{s,k-s}^{-}$ 
	for $k/3<s<2k/3$;
	\item 
	$w_{s,2s}^{+}$ or $w_{2s,s}^{+}$ 
	for $0\leq s<k/3$;
	\item 
	$w_{s,2s}^{-}$ or $w_{2s,s}^{-}$ 
	for 
	$k/3<s<k/2$;
	\item 
	$w_{s,2s-k}^{-}$ or $w_{2s-k,s}^{-}$ 
	for $k/2\leq s<2k/3$;
	\item 
	$w_{s,2s-k}^{+}$ or $w_{2s-k,s}^{+}$ 
	for $2k/3<s\leq k-1$.
\end{itemize}
In particular the associated eigenvalues 
$\lambda_{s,t}^{\pm}$ other than the above lists give 
$D_3$-invariant or 
$D_3$-alternating eigenvalues, respectively.
\end{Lem}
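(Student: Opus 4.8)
The plan is to trace how the dihedral group $D_6$ acts on the eigenfunctions $v_{s,t}^{\pm}$ and thereby reduce the vanishing of $u_{s,t}^{\pm}$ and $w_{s,t}^{\pm}$ to a short character sum over the stabilizer of the pair $(s,t)$. Writing $\zeta=e^{2\pi i/k}$, a direct substitution in the defining formulas shows that the rotation $\rho$ by $2\pi/3$ sends $v_{s,t}^{\pm}$ to $\zeta^{-t}v_{-t,\,s-t}^{\pm}$, that the reflection $\tau$ along the long diagonal sends $v_{s,t}^{\pm}$ to $v_{t,s}^{\pm}$, and that the reflection $\rho'$ along the short diagonal sends $v_{s,t}^{\pm}$ to $c_{\rho'}(s,t)\,v_{-t,-s}^{\pm}$ with $c_{\rho'}(s,t)=\zeta^{-s-t}(1+\zeta^{s}+\zeta^{t})/(3-\lambda_{s,t}^{\pm})$, the last claim using $(3-\lambda_{s,t}^{\pm})^{2}=|1+\zeta^{s}+\zeta^{t}|^{2}$. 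In each case the radicand $3+2\cos\frac{2\pi s}{k}+2\cos\frac{2\pi t}{k}+2\cos\frac{2\pi(s-t)}{k}$ is unchanged under the induced substitution on $(s,t)$, so the branch $\pm$ is preserved (outside the finitely many modes with $\lambda_{s,t}^{\pm}=3$, which I would dispose of separately), every scalar $c_{\sigma}(s,t)$ lies on the unit circle, and $\sigma\mapsto c_{\sigma}(\cdot)$ obeys the cocycle identity $c_{\sigma\sigma'}(m)=c_{\sigma}(\sigma'(m))\,c_{\sigma'}(m)$ for $\sigma,\sigma'\in D_6$, where $\sigma(m)$ denotes the induced linear action on modes $m\in(\mathbb{Z}/k\mathbb{Z})^{2}$. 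In these terms $u_{s,t}^{\pm}=\sum_{\sigma\in D_6}c_{\sigma}(s,t)\,v_{\sigma(s,t)}^{\pm}$ and $w_{s,t}^{\pm}=\sum_{\sigma\in D_6}\sgn(\sigma)\,c_{\sigma}(s,t)\,v_{\sigma(s,t)}^{\pm}$.

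Restricted to the sublattice $\{a+b\omega\}$, the functions $v_{m}^{\pm}$ for distinct modes $m$ are distinct additive characters of $(\mathbb{Z}/k\mathbb{Z})^{2}$, hence linearly independent; moreover, being $D_6$-invariant (resp.\ $D_6$-alternating relative to $\sgn$), $u_{s,t}^{\pm}$ (resp.\ $w_{s,t}^{\pm}$) vanishes on the cluster $\triangle(k)$ if and only if it vanishes on all of $T(k)$ --- which is exactly why the $D_6$-averages in the text descend to projections onto the cluster eigenspaces. Grouping the twelve terms of $u_{s,t}^{\pm}$ by the value of $\sigma(s,t)$, the terms with $\sigma(s,t)=g(s,t)$ form the coset $g\,\mathrm{Stab}(s,t)$, and by the cocycle identity their contribution is $c_{g}(s,t)$ times $\sum_{\sigma\in\mathrm{Stab}(s,t)}c_{\sigma}(s,t)$; since $c_{g}(s,t)\neq0$, I obtain
\[
	u_{s,t}^{\pm}\equiv 0
	\iff
	\sum_{\sigma\in\mathrm{Stab}_{D_6}(s,t)}c_{\sigma}(s,t)=0,
	\qquad
	w_{s,t}^{\pm}\equiv 0
	\iff
	\sum_{\sigma\in\mathrm{Stab}_{D_6}(s,t)}\sgn(\sigma)\,c_{\sigma}(s,t)=0 .
\]
In particular neither function vanishes when $\mathrm{Stab}(s,t)$ is trivial, so it remains only to treat the $(s,t)$ that lie on a reflection axis of the $D_6$-action.

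There are six such axes, which in these coordinates are $\{s=0\}$, $\{t=0\}$, $\{s=t\}$ (fixed loci of the reflections conjugate to $\tau$) and $\{s+t=0\}$, $\{t=2s\}$, $\{s=2t\}$ (fixed loci of those conjugate to $\rho'$), and all of their pairwise intersections reduce to $(0,0)$ together with the two modes $(k/3,2k/3),(2k/3,k/3)$, which occur only for $3\mid k$ and carry eigenvalue $3$. Since $\sgn$ is the homomorphism $D_6\to\{\pm1\}$ with kernel $\langle\rho,\rho'\rangle$, it equals $-1$ on the $\tau$-class and $+1$ on the $\rho'$-class. On $\{s=t\}$ the stabilizer is $\{e,\tau\}$ with $c_{\tau}(s,s)=1$, so $w_{s,s}^{\pm}\equiv 0$ for every $s$ while $u_{s,s}^{\pm}\not\equiv0$; the same computation on $\{s=0\}$ and $\{t=0\}$ (where the stabilizing reflection is $\rho\tau$, resp.\ $\tau\rho$, again with scalar $1$ along the axis) yields the first two families of the $w$-list. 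On $\{s+t=0\}$ the stabilizer is $\{e,\rho'\}$, and from $(3-\lambda_{s,-s}^{\pm})^{2}=(1+2\cos\frac{2\pi s}{k})^{2}$ one gets $c_{\rho'}(s,-s)=\mp\,\sgn(1+2\cos\frac{2\pi s}{k})$ for the $\lambda^{\pm}$ branch, which is $-1$ precisely on the ranges of $s$ listed in the lemma (the thresholds $k/3,2k/3$ being where $1+2\cos\frac{2\pi s}{k}=0$); and because $\sgn(\rho')=+1$ the conditions for $u^{\pm}\equiv0$ and for $w^{\pm}\equiv0$ coincide along this axis. Conjugation by $\rho$ carries $\{s+t=0\}$ to $\{t=2s\}$ --- keeping the first coordinate fixed --- and then to $\{s=2t\}$, sending $\rho'$ to a $\sgn$-positive reflection with the same scalar at the corresponding point, so the remaining families of both lists follow, the "$2s$" versus "$2s-k$" dichotomy being only the normalization of the second coordinate into $[0,k)$. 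Finally $(0,0)$ is settled by hand: $v_{0,0}^{-}$ is constant and $v_{0,0}^{+}$ is the "upward minus downward" sign function, and writing $c_{\sigma}(0,0)=\varepsilon(\sigma)\in\{\pm1\}$ for the sublattice-parity character of $D_6$ gives $u_{0,0}^{+}\equiv0$, $w_{0,0}^{\pm}\equiv0$ and $u_{0,0}^{-}\not\equiv0$ --- as the lists require --- and the two eigenvalue-$3$ modes are handled the same way from the explicit eigenfunctions $v_{k/3,2k/3}$, $v_{2k/3,k/3}$ recorded just above.

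I expect the main obstacle to be the step just described: correctly locating the six reflection axes of the $D_6$-action on $(\mathbb{Z}/k\mathbb{Z})^{2}$ --- in particular recognizing that three of them are $\{s+t=0\}$, $\{t=2s\}$, $\{s=2t\}$ --- and evaluating $c_{\rho'}$ on $\{s+t=0\}$ with the correct branch of the square root, since this is precisely where the eigenvalue-dependent ranges in the statement originate; once those are in place, the matching with the two lists is purely bookkeeping.
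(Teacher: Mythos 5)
Your route is genuinely different from the paper's. The paper proves the lemma by evaluating two explicit linear combinations --- one involving $u_{s,t}^{\pm}(0)$ and $u_{s,t}^{\pm}(\cm)$, the other $w_{s,t}^{\pm}(1)$ and $w_{s,t}^{\pm}(\cm+1)$ --- and showing they equal, respectively, a product of the three sines $\sin\frac{\pi(s+t)}{k}\sin\frac{\pi(2s-t)}{k}\sin\frac{\pi(s-2t)}{k}$ and a product of all six sines $\sin\frac{\pi s}{k}\cdots\sin\frac{\pi(s-2t)}{k}$; non-vanishing off the zero sets follows at once, and the lists are then read off. You instead organize the twelve terms of the $D_6$-average as a cocycle-twisted orbit sum, reduce vanishing to a character sum over $\mathrm{Stab}_{D_6}(s,t)$, and evaluate that sum axis by axis. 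The two computations are of course cousins: your six reflection axes $\{s=0\},\{t=0\},\{s=t\},\{s+t=0\},\{t=2s\},\{s=2t\}$ are exactly the zero sets of the paper's six sine factors, and your evaluation of $c_{\rho'}$ via the sign of $1+2\cos\frac{2\pi s}{k}$ supplies precisely the range information (the thresholds $k/3$, $2k/3$ and the $\pm$ branch dependence) that the paper's one-line "from which the list is obtained" leaves implicit. Your version is more conceptual and makes the "why" of the ranges visible; the paper's is shorter but hides the if-direction.

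There is one concrete error to repair: the claim that the pairwise intersections of the six axes in $(\mathbb{Z}/k\mathbb{Z})^{2}$ reduce to $(0,0)$ and the two eigenvalue-$3$ modes. For even $k$ the $2$-torsion points $(k/2,0)$, $(0,k/2)$ and $(k/2,k/2)$ each lie on exactly one $\tau$-type and one $\rho'$-type axis (e.g.\ $(k/2,k/2)$ lies on $\{s=t\}$ and on $\{s+t=0\}$), so their stabilizer is the Klein four-group $\{e,\tau_{1},\rho'_{1},-I\}$ and your two-term analysis of a single axis does not apply verbatim. The fix is immediate from your own cocycle identity: at a common fixed point the two reflection scalars multiply, so the stabilizer sum factors as $\bigl(1+\sgn(\tau_{1})c_{\tau_{1}}\bigr)\bigl(1+\sgn(\rho'_{1})c_{\rho'_{1}}\bigr)$, hence vanishes iff at least one single-axis criterion forces vanishing; checking $(k/2,k/2)$ (where $1+2\cos\pi=-1$ gives $c_{\rho'}=+1$ for the $+$ branch and $-1$ for the $-$ branch) confirms $u^{+}\not\equiv0$, $u^{-}\equiv0$, $w^{\pm}\equiv0$, in agreement with the lists, and similarly for $(k/2,0)$ and $(0,k/2)$. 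With that patch, and granting the asserted "direct substitution" identities for the scalars $c_{\sigma}$ (which do check out, including on the second sublattice), your argument is complete at the same level of rigor as the paper's.
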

\begin{proof}
The proof uses explicit expression of 
$u_{s,t}^{\pm}$ and $w_{s,t}^{\pm}$ 
via the coordinate (\ref{Eq(hex_coord)}). 
\par
A direct computation shows that 
\[
	\begin{aligned}
		&u_{s,t}^{\pm}(0)
		\left(1+e^{4\pi is/k}+e^{4\pi it/k}\right)
		\left(1+e^{-2\pi is/k}+e^{-2\pi it/k}\right)
		\\
		&\pm
		u_{s,t}^{\pm}(\cm)
		\left(1+e^{2\pi is/k}+e^{2\pi it/k}\right)
		\left\vert
		\left(1+e^{2\pi is/k}+e^{2\pi it/k}\right)
		\right\vert
		\\
		={}&
		-32i
		\sin\frac{\pi(s+t)}{k}
		\sin\frac{\pi(2s-t)}{k}
		\sin\frac{\pi(s-2t)}{k},
	\end{aligned}
\]
from which the above list for $u_{s,t}^{\pm}$ is obtained. 
\par
A direct computation shows that 
\[
	\begin{aligned}
		&w_{s,t}^{\pm}(1)
		\left(
		1+e^{2\pi is/k}+e^{2\pi it/k}+e^{2\pi i(s+t)/k}
		+e^{2\pi i(s-t)/k}+e^{2\pi i(t-s)/k}
		\right)
		\\
		&\pm
		w_{s,t}^{\pm}(\cm+1)
		\left(
		1+e^{2\pi is/k}+e^{2\pi it/k}
		\right)
		\\
		={}&
		64
		\sin\frac{\pi s}{k}
		\sin\frac{\pi t}{k}
		\sin\frac{\pi(s-t)}{k}
		\sin\frac{\pi(s+t)}{k}
		\sin\frac{\pi(2s-t)}{k}
		\sin\frac{\pi(s-2t)}{k},
	\end{aligned}
\]
from which the above list for $w_{s,t}^{\pm}$ is obtained. 
\end{proof}
\begin{proof}[Proof of\/ 
{\upshape(\ref{Eq(lambda(GC(X))>6-e)})} 
in Theorem~{\upshape\ref{Thm(comp_with_X)}}]
Let us prove that 
if $\lambda\geq 0$ is a $D_3$-invariant eigenvalue 
for the $(k,0)$-cluster, then
\begin{equation}
	\lambda
	\leq
	\lambda_{\card{V(\GC_{k,0}(X))}-i+1}(\GC_{k,0}(X))
	\label{Eq(inv_lambda<lambda(GC(X)))}
\end{equation}
holds for $i=1,2,\dots,\card{V(X)}$. 
\par
Let $u\colon \GC_{k,0}(X)\rightarrow \mathbb{C}$ 
be an eigenfunction for the eigenvalue $\lambda$ 
which is obtained, as was explained 
in (1) of Remarks~\ref{Rem(inv&alt)}, 
from a $D_3$-invariant eigenfunction 
on the $(k,0)$-cluster. 
We may assume that $\sum_{x\in V(p)}u(x)^2=1$, 
so that $\transp{Q}Q=\id_{\mathbb{C}^{V(X)}}$. 
Replacing $c$ in (\ref{Eq(Qf=cuf)}) by $u$, 
after a straightforward computation using (i) and (ii) 
in Definition~\ref{Def(D_3-inv&alt_ev)} for $u$, 
we can obtain the following equality:
\begin{equation}
	(\transp{Q}\Delta_{\GC_{k,0}(X)}Qf)(p)
	=
	\left\{
	\sum_{x\in V_1^q(p)}u(x)^2
	+
	2\sum_{x\in V_2^q(p)}u(x)^2
	\right\}
	(\Delta_Xf)(p)
	+\lambda f(p)
	\label{Eq(t^Q_Delta_Q)}
\end{equation}
for any $f\in \mathbb{C}^{V(X)}$ and any $p\in V(X)$, 
where $q\in N_X(p)$ is an adjacent vertex to $p$. 
(\ref{Eq(inv_lambda<lambda(GC(X)))}) is proved 
again from the interlacing property 
(Theorem~\ref{Thm(interlacing)}).
\par
(\ref{Eq(lambda(GC(X))>6-e)}) is an immediate consequence 
of Lemma~\ref{Lem(all_inv&alt_evs)}, which claims that
\[
	\lambda
	=
	\lambda_{1,0}^{+}
	=
	\lambda_{0,1}^{+}
	=
	3+\sqrt{5+4\cos\frac{2\pi}{k}}
\]
is the largest $D_3$-invariant eigenvalue 
for the $(k,0)$-cluster. 
\end{proof}

\begin{proof}[Proof of 
Theorem~{\upshape\ref{Thm(arbitrary_lambda)}}]
It follows from the consequence of 
Lemma~\ref{Lem(all_inv&alt_evs)} that
\begin{itemize}[itemindent=-15pt]
	\item 
	$\lambda_{j,k-j}^{-}
	=3-\sqrt{3+4\cos\frac{2\pi j}{k}+2\cos\frac{4\pi j}{k}}$ 
	for $0\leq j\leq \lceil k/3\rceil-1$;
	\item 
	$\lambda_{\lfloor k/3\rfloor+j,k-\lfloor k/3\rfloor-j}^{+}
	=3+\sqrt{3+4\cos\frac{2\pi(\lfloor k/3\rfloor+j)}{k}
	+2\cos\frac{4\pi(\lfloor k/3\rfloor+j)}{k}}$
	for 
	$1\leq j\leq \lceil 2k/3\rceil-\lfloor k/3\rfloor -1$;
	\item 
	$\lambda_{s,0}^{+}
	=3+\sqrt{5+4\cos\frac{2\pi s}{k}}$ 
	for $1\leq s\leq k-1$
\end{itemize}
are all $D_3$-invariant eigenvalues 
for the $(k,0)$-cluster. 
In the expression of $\lambda_{s,0}^{+}$, set $x = s/k \in [0, 1)$, then 
the function $3+\sqrt{5+4\cos(2\pi x)}$ takes value $[4, 6]$.
Taking large $k$, we may approximate any number $x \in [0, 1)$ by $s/k$ ($0 \le s \le k-1$), 
and an arbitrary real number in $[4, 6]$ is approximated by $\lambda_{s,0}^{+}$.
Similary an arbitrary real number in $[0, 3]$ and $[3, 4]$ 
is approximated by $\lambda_{j,k-j}^{-}$ and $\lambda_{\lfloor k/3\rfloor+j,k-\lfloor k/3\rfloor-j}^{+}$, 
respectively.
Hence 
an arbitrary real number in $[0,6]$ is approximated 
by these values as $k$ tends to infinity.
\end{proof}
In order to prove Theorem~\ref{Thm(o(k^2)_evs)} 
using Theorem~\ref{Thm(comp_with_3-vlnt_clstr)}, 
it suffices to find all the eigenvalues 
of a $(k,0)$-cluster. 
To achieve this, 
we notice that the set of all the eigenvalues of 
a $(k,0)$-cluster contains the set of 
all the $D_3$-invariant and all the $D_3$-alternating 
eigenvalues of the $(3k,0)$-cluster; 
indeed, we have a well-defined injection
\[
	\iota\colon
	\mathbb{C}^{\triangle(k)}
	\rightarrow
	\mathcal{U}_{3k}
	\oplus
	\mathcal{W}_{3k}
\]
which is defined by the foldings like shown 
in Figure~\ref{Fig(patapata_3vlnt)}, 
where $\mathcal{U}_{3k}$ 
(resp.\ $\mathcal{W}_{3k}$) 
denotes the space of $D_3$-invariant 
(resp.\ $D_3$-alternating) 
eigenfunctions on the $(3k,0)$-cluster. 
\begin{figure}[htbp]
	\centering
	\includegraphics[height=5cm]{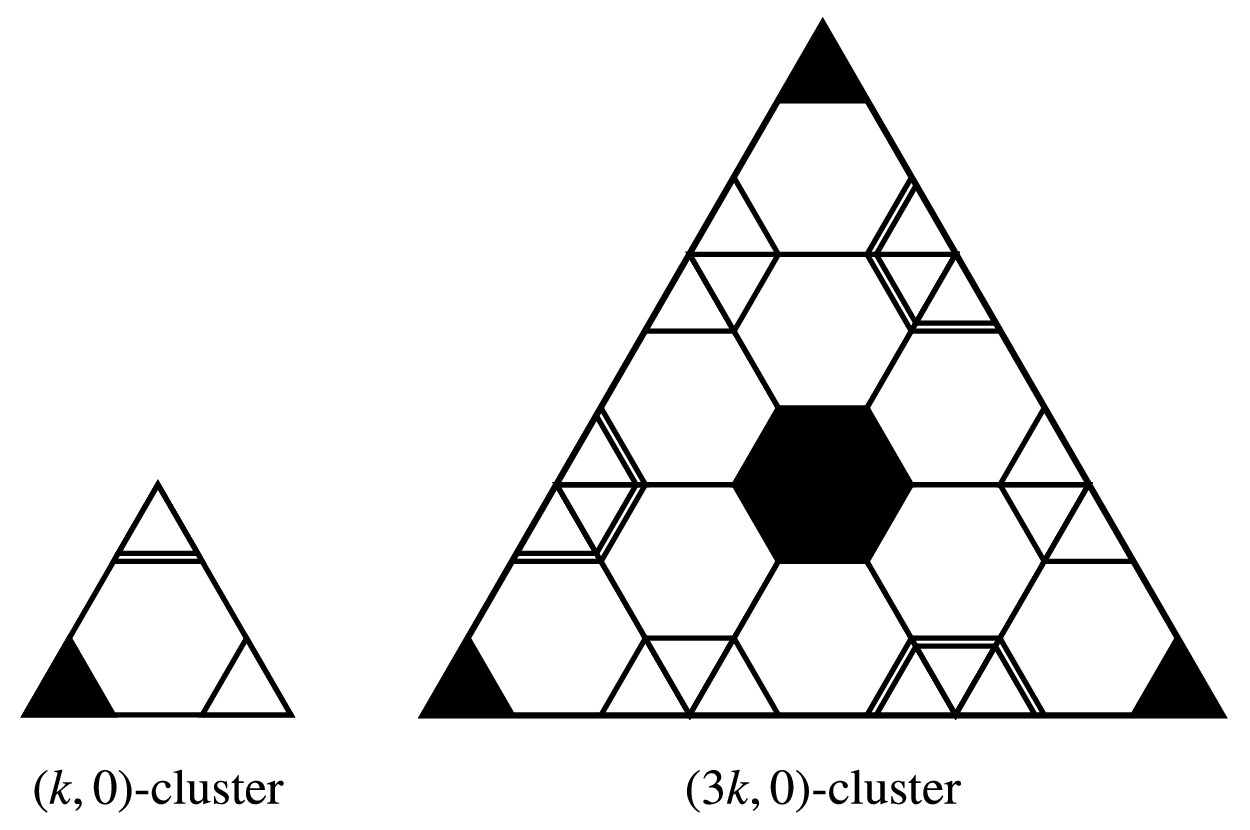}
	\caption{%
	$\triangle(3k)$ is tiled 
	by the foldings of $\triangle(k)$; 
	if the function on $\triangle(k)$ is 
	symmetric (resp.\ antisymmetric) 
	w.r.t.\ the line $a=b$, then the obtained function 
	lies in $\mathcal{U}_{3k}$ (resp.\ $\mathcal{W}_{3k}$).
	}
	\label{Fig(patapata_3vlnt)}
\end{figure}
\begin{Lem}
\label{Lem(patapata)}
$u_{s,t}^{\pm}\in \mathcal{U}_{3k}$ 
{\upshape(}resp.\ 
$w_{s,t}^{\pm}\in \mathcal{W}_{3k}${\upshape)} 
lies in the image of $\iota$ if and only if 
$s+t$ is divisible by $3$. 
Moreover, if both $s$ and $t$ are divisible by $3$, 
then $u_{s,t}^{\pm}|_{\triangle(k)}$ 
{\upshape(}resp.\ 
$w_{s,t}^{\pm}|_{\triangle(k)}${\upshape)} 
is also a $D_3$-invariant 
{\upshape(}resp.\ $D_3$-alternating{\upshape)} 
eigenfunction. 
\end{Lem}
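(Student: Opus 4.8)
The plan is to pin down $\mathrm{im}(\iota)$ inside $\mathcal{U}_{3k}\oplus\mathcal{W}_{3k}$ by a single extra symmetry, and then to read off the divisibility condition from the explicit Fourier modes $v_{s,t}^{\pm}$ underlying $u_{s,t}^{\pm}$ and $w_{s,t}^{\pm}$.

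First I would make the folding of Figure~\ref{Fig(patapata_3vlnt)} precise. The nine tiles of $\triangle(3k)$ are the nine translates of $\triangle(k)$, under the $(3,3,3)$-triangle group $\Gamma$ generated by the reflections in the three edges of $\triangle(k)$, that fall inside $\triangle(0,3k,3k\omega)$; on the hexagonal torus $T(3k)$ (the lattice \eqref{Eq(hex_coord)} reduced modulo $3k\mathbb{Z}[\omega]$, in which the $(3k,0)$-cluster $\triangle(3k)$ sits) the group $\Gamma$ acts through a finite quotient $\overline\Gamma$, and a function on $T(3k)$ restricts to a function in $\mathrm{im}(\iota)$ precisely when it is $\overline\Gamma$-invariant. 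Now $\Gamma=kL\rtimes D_3$, where $L\subset\mathbb{Z}[\omega]$ is the sublattice of index $3$ consisting of all $a+b\omega$ with $a\equiv b\pmod 3$ (the composition of the reflections in two parallel edge-lines of the side-$k$ grid is the translation by a generator of $kL$), and $D_3$ is the point group --- the rotations by $2\pi/3$ and the three reflections in the edge directions, all fixing a common three-fold centre. One checks $3k\mathbb{Z}[\omega]\subset kL$ with cyclic quotient $kL/3k\mathbb{Z}[\omega]\cong\mathbb{Z}/3$ generated by the class of $k(1+\omega)$. Since the functions $u_{s,t}^{\pm}$ and $w_{s,t}^{\pm}$ are built as $D_6$-averages of the $v_{s,t}^{\pm}$ (see \eqref{Eq(u_st^pm&w_st^pm)}) and $D_6\supset D_3$, they already carry all the point-group symmetries of $\Gamma$; hence for them $\overline\Gamma$-invariance reduces to invariance under the single translation $z\mapsto z+k(1+\omega)$ on $T(3k)$. (A dimension count, $\dim\mathbb{C}^{\triangle(k)}=k^2=\#\{\overline\Gamma\text{-orbits on }T(3k)\}$, is consistent with $\iota$ mapping isomorphically onto this invariant subspace, and with $\iota$ landing in $\mathcal{U}_{3k}\oplus\mathcal{W}_{3k}$.)

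Next is the computation. The translation $a+b\omega\mapsto a+b\omega+k(1+\omega)$ multiplies the mode $v_{s,t}^{\pm}(a+b\omega)=e^{2\pi i(sa+tb)/(3k)}$ by $e^{2\pi i(s+t)k/(3k)}=e^{2\pi i(s+t)/3}$; because $kL$ is $D_6$-stable, every $D_6$-translate of $v_{s,t}^{\pm}$ is multiplied by the same scalar $e^{2\pi i(s+t)/3}$, and these translates are linearly independent, so $u_{s,t}^{\pm}$ (and likewise $w_{s,t}^{\pm}$) is fixed by the translation if and only if $3\mid(s+t)$. Together with the previous paragraph this is the first assertion. For the ``moreover'' part, suppose $3\mid s$ and $3\mid t$. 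Then $v_{s,t}^{\pm}$ is invariant under the whole lattice $k\mathbb{Z}[\omega]$ (translations by $k$ and by $k\omega$ now act trivially), hence descends along the covering $\pi\colon T(3k)\to T(k)$ to $v_{s/3,t/3}^{\pm}$. Since $\pi$ is equivariant for the $D_6$-actions --- the generators of $D_6$ listed before \eqref{Eq(u_st^pm&w_st^pm)} are compatible with reduction modulo $k$ --- the function $u_{s,t}^{\pm}$ descends to $u_{s/3,t/3}^{\pm}$ on $T(k)$; restricting to the top fold-tile $\triangle(k)\subset\triangle(3k)$, on which $\pi$ is the standard inclusion $\triangle(k)\hookrightarrow T(k)$, identifies $u_{s,t}^{\pm}|_{\triangle(k)}$ with $u_{s/3,t/3}^{\pm}|_{\triangle(k)}$, which is a $D_3$-invariant eigenfunction by Definition~\ref{Def(D_3-inv&alt_ev)}. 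The same argument with the sign character in place of the trivial one gives the $w$-case, with ``alternating'' replacing ``invariant''.

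The hard part will be the first of these steps. One must verify that the folding genuinely intertwines the Laplacians $\Delta_{\triangle(k)}$ and $\Delta_{\triangle(3k)}$, so that a folded eigenfunction is again an eigenfunction --- this is exactly the reflection (Neumann) principle of Remarks~\ref{Rem(inv&alt)}, now imposed across the internal crease lines of $\triangle(3k)$ rather than across all the cluster boundaries of $\GC_{k,0}(X)$ --- and that the fold group really is the wallpaper group $\Gamma$ with translation lattice $kL$ and point group $D_3\subset D_6$, so that the reduction of $\overline\Gamma$-invariance to the single translation is legitimate. Once these structural facts are in place, everything else is the short character computation above. A more computational alternative would avoid the wallpaper-group language altogether: write the nine fold maps as explicit affine maps of $\mathbb{Z}[\omega]$ and substitute them into the closed formulas for $v_{s,t}^{\pm}$, $u_{s,t}^{\pm}$ and $w_{s,t}^{\pm}$, much as in the proof of Lemma~\ref{Lem(all_inv&alt_evs)}; this trades the conceptual argument for bookkeeping.
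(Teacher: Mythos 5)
Your route is genuinely different from the paper's and is viable. The paper characterizes membership in $\mathrm{im}(\iota)$ by the two explicit reflection identities \eqref{Eq(patapata)} (the reflections in the two interior crease lines parallel to the side joining $3k$ and $3k\omega$), substitutes the closed formulas for $u_{s,t}^{\pm}$ and $w_{s,t}^{\pm}$ at the single points $a=b=0$, resp.\ $(a,b)=(1,0)$, and reads off $3\mid(s+t)$ from a product of sines times $\bigl(1-e^{-2\pi i(s+t)/3}\bigr)$; the ``moreover'' part is left to a similar computation. You instead encode all nine foldings at once as invariance under the $(3,3,3)$ wallpaper group $\Gamma=kL\rtimes D_3$ acting on $T(3k)$ and reduce to one translation character; this is cleaner and makes the ``moreover'' part transparent via descent along $T(3k)\to T(k)$, at the price of the structural verifications you rightly flag. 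Your lattice computations are correct: the translation subgroup of the group generated by the reflections in the sides of $\triangle(0,k,k\omega)$ is $kL$ with $L=(2-\omega)\mathbb{Z}[\omega]=\{a+b\omega: a\equiv b\pmod 3\}$, and $kL/3k\mathbb{Z}[\omega]\cong\mathbb{Z}/3$ is generated by $k(1+\omega)$.

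Two points need repair. First, the reduction to the translation: the subgroup $D_6\cap\Gamma$ is $\ker(\sgn)=\{1,\rho,\rho^{2},\sigma'_1,\sigma'_2,\sigma'_3\}$, where $\rho$ is the rotation by $2\pi/3$ about the centroid $k(1+\omega)$ of $\triangle(3k)$ (the paper's centre $(3k-1)(1+\omega)/3$ is this centroid once one notes that the coordinates \eqref{Eq(hex_coord)} are barycentres shifted by $-\cm$) and the $\sigma'_i$ are the reflections in the three \emph{sides} of $\triangle(3k)$ --- not the reflections through the centroid in the edge directions. One must check that $\rho$, $\sigma'_1$ and $kL$ generate $\Gamma$ (true: $\sigma'_1$ and the parallel mirror of $\Gamma$ through $k(1+\omega)$ compose to a translation lying in $kL$), and that $u_{s,t}^{\pm}$ \emph{and} $w_{s,t}^{\pm}$ are honestly invariant under $\ker(\sgn)$ --- the fold condition for the $w$'s is plain $\Gamma$-invariance, the sign character only governing the median reflections, which are not folds. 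Second, your assertion that the translation by $k(1+\omega)$ multiplies every $D_6$-translate of $v_{s,t}^{\pm}$ by the same scalar is false: $D_6$ acts on $kL/3k\mathbb{Z}[\omega]\cong\mathbb{Z}/3$ through a nontrivial quadratic character (the rotation by $\pi$ and the three side reflections act by $-1$), so half the translates are multiplied by $e^{2\pi i(s+t)/3}$ and half by $e^{-2\pi i(s+t)/3}$. The conclusion survives, since both scalars equal $1$ exactly when $3\mid(s+t)$, but one must split the average into the two translation-eigencomponents and exclude the degenerate cases $u_{s,t}^{\pm}\equiv 0$ or $w_{s,t}^{\pm}\equiv 0$, for which the ``only if'' direction would otherwise fail (e.g.\ $w_{s,0}^{\pm}\equiv 0$ with $3\nmid s$); the paper handles the latter explicitly via Lemma~\ref{Lem(all_inv&alt_evs)}.
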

\begin{proof}[Outline of the Proof]
The proof again uses explicit expression 
of $u_{s,t}^{\pm}$ and $w_{s,t}^{\pm}$. 
Note first that $f=u_{s,t}^{\pm}$ or $w_{s,t}^{\pm}$ 
lies in the image of $\iota$ iff.\ 
\begin{equation}
	\begin{aligned}
		f_{s,t}^{\pm}(a+b\omega)
		={}&
		f_{s,t}^{\pm}(\cm+k-b-1+(k-a-1)\omega),
		\\
		f_{s,t}^{\pm}(a+b\omega)
		={}&
		f_{s,t}^{\pm}(\cm+2k-b-1+(2k-a-1)\omega)
	\end{aligned}
	\label{Eq(patapata)}
\end{equation}
for any $a,b$. 
\par
A direct computation similar to that in the proof of 
Lemma~\ref{Lem(all_inv&alt_evs)} shows that 
(\ref{Eq(patapata)}) for $f_{s,t}^{\pm}=u_{s,t}^{\pm}$ 
with $a=b=0$ implies
\[
	\left(1-e^{-2\pi i(s+t)/3}\right)
	\sin\frac{\pi(s+t)}{3k}
	\sin\frac{\pi(2s-t)}{3k}
	\sin\frac{\pi(s-2t)}{3k}
	=0,
\]
which is valid only if $s+t$ is divisible by $3$. 
\par
A direct computation shows that 
(\ref{Eq(patapata)}) for 
$f_{s,t}^{\pm}=w_{s,t}^{\pm}$ 
with $(a,b)=(1,0)$ implies
\[
	\left(1-e^{-2\pi i(s+t)/3}\right)
	\sin^2\frac{\pi s}{3k}
	\sin^2\frac{\pi t}{3k}
	\sin^2\frac{\pi(s-t)}{3k}
	\sin\frac{\pi(s+t)}{3k}
	\sin\frac{\pi(2s-t)}{3k}
	\sin\frac{\pi(s-2t)}{3k}
	=0,
\]
which is valid only if either $s=0$, $t=0$, 
$s=t$ or $s+t\equiv 0\pmod 3$. 
The cases for $s=0$, $t=0$ and $s=t$ 
are excluded because then $w_{s,t}^{\pm}\equiv 0$. 
\end{proof}
\begin{proof}[Proof of 
Theorem~{\upshape\ref{Thm(o(k^2)_evs)}}]
As is easily confirmed, 
$f_{s,t}^{\pm}=u_{s,t}^{\pm}$ or $w_{s,t}^{\pm}$ satisfies
\[
	\begin{aligned}
		f_{s,t}^{\pm}
		={}&
		f_{t,s}^{\pm}
		=f_{t-s,t}^{\pm}
		=f_{t,t-s}^{\pm}
		=f_{3k-s,3k-t}^{\pm}
		=f_{3k-t,3k-s}^{\pm}
		\\
		={}&
		f_{3k-t+s,3k-t}^{\pm}
		=f_{3k-t,3k-t+s}^{\pm}
		=f_{3k-s,t-s}^{\pm}
		=f_{t-s,3k-s}^{\pm}
		=f_{s,3k-t+s}^{\pm}
		=f_{3k-t+s,s}^{\pm}
	\end{aligned}
\]
and therefore, 
by Lemmas~\ref{Lem(all_inv&alt_evs)} 
and \ref{Lem(patapata)}, 
the image of $\iota$ is contained in the vector space, 
say $\mathcal{V}$, 
spanned by $u_{s,t}^{\pm}$'s for
\begin{equation}
	\begin{aligned}
		&
		\left\{
		({\pm},s,t)
		\setmid
		\text{$s+t$ is divisible by $3$},\ 
		0<s<2k,\ \max\{0,2s-3k\}<t<s/2
		\right\}
		\\
		\cup
		&
		\left\{
		({-},2s,s)
		\setmid
		0\leq s<2k
		\right\}
		\\
		\cup
		&
		\left\{
		({+},s,2s-3k)
		\setmid
		3k/2\leq s<2k
		\right\}
		\\
		\cup
		&
		\left\{
		(\pm,s,0)
		\setmid
		\text{$s$ is divisible by $3$},\ 
		0<s<3k/2
		\right\}
	\end{aligned}
	\label{Eq((pm,s,t)_for_u_s,t^pm)}
\end{equation}
and $w_{s,t}^{\pm}$'s for
\begin{equation}
	\begin{aligned}
		&
		\left\{
		({\pm},s,t)
		\setmid
		\text{$s+t$ is divisible by $3$},\ 
		0<s<2k,\ \max\{0,2s-3k\}<t<s/2
		\right\}
		\\
		\cup
		&
		\left\{
		({-},2s,s)
		\setmid
		0\leq s<2k
		\right\}
		\\
		\cup
		&
		\left\{
		({+},s,2s-3k)
		\setmid
		3k/2\leq s<2k
		\right\}.
	\end{aligned}
	\label{Eq((pm,s,t)_for_w_s,t^pm)}
\end{equation}
Since the number of elements of 
(\ref{Eq((pm,s,t)_for_u_s,t^pm)}) 
is given as
\[
	\left\{
	\begin{aligned}
		& \frac{9}{2}j^2+\frac{15}{2}j+3,
		& & \text{if $k=3j+2$},
		\\
		& \frac{9}{2}j^2+\frac{21}{2}j+6,
		& & \text{if $k=3j+3$},
		\\
		& \frac{9}{2}j^2+\frac{27}{2}j+10,
		& & \text{if $k=3j+4$},
	\end{aligned}
	\right.
\]
and that of 
(\ref{Eq((pm,s,t)_for_w_s,t^pm)}) 
is given as
\[
	\left\{
	\begin{aligned}
		& \frac{9}{2}j^2+\frac{9}{2}j+1,
		& & \text{if $k=3j+2$},
		\\
		& \frac{9}{2}j^2+\frac{15}{2}j+3,
		& & \text{if $k=3j+3$},
		\\
		& \frac{9}{2}j^2+\frac{21}{2}j+6,
		& & \text{if $k=3j+4$}
	\end{aligned}
	\right.
\]
($j\geq 0$), total of which is $k^2$ in either case, 
the image of $\iota$ must coincide with $\mathcal{V}$. 
In particular 
the set of $\lambda_{s,t}^{\pm}$'s for 
(\ref{Eq((pm,s,t)_for_u_s,t^pm)}) 
and 
(\ref{Eq((pm,s,t)_for_w_s,t^pm)}) 
is the set of all the eigenvalues of 
the $(k,0)$-cluster. 
\par
The function 
$P(3k)\ni (s,t)\mapsto \lambda_{s,t}^{\mp}\in [0,6]$ 
takes value near $0$ (resp.\ $6$) 
only near the four corners of $P(3k)$. 
The number of vertices among 
(\ref{Eq((pm,s,t)_for_u_s,t^pm)}) 
(resp.\ (\ref{Eq((pm,s,t)_for_w_s,t^pm)})) 
within distance $o(k)$ from the corner 
is $o(k^2)$, which are arbitrarily close to 
$0$ (resp.\ $6$) when $k$ is sufficiently large. 
\end{proof}
\subsection{The case where $X$ is $4$-valent}
The dihedral group $D_4$ of order $8$ 
acts in a natural way on $\mathbb{C}^{\square(k)}$ 
and the notions of \emph{$D_4$-invariant eigenvalue} 
and \emph{$D_4$-alternating eigenvalue} 
are also defined exactly in the same way as in 
$3$-valent case. 
Similarly as in the $3$-valent case, 
we have a well-defined injection
\[
	\iota\colon 
	\mathbb{C}^{\square(k)}
	\rightarrow
	\mathcal{U}_{2k}
	\oplus
	\mathcal{W}_{2k}
\]
which is defined like shown 
in Figure~\ref{Fig(patapata_4vlnt)}, 
where $\mathcal{U}_{2k}$ 
(resp.\ $\mathcal{W}_{2k}$) 
denotes the space of $D_4$-invariant 
(resp.\ $D_4$-alternating 
w.r.t.\ the diagonal line of $\square(2k)$) 
eigenfunctions on the $(2k,0)$-cluster. 
We shall find all the eigenfunctions on $\square(k)$ 
by completely determining the image of $\iota$. 
\begin{figure}[htbp]
	\centering
	\includegraphics[height=5cm]{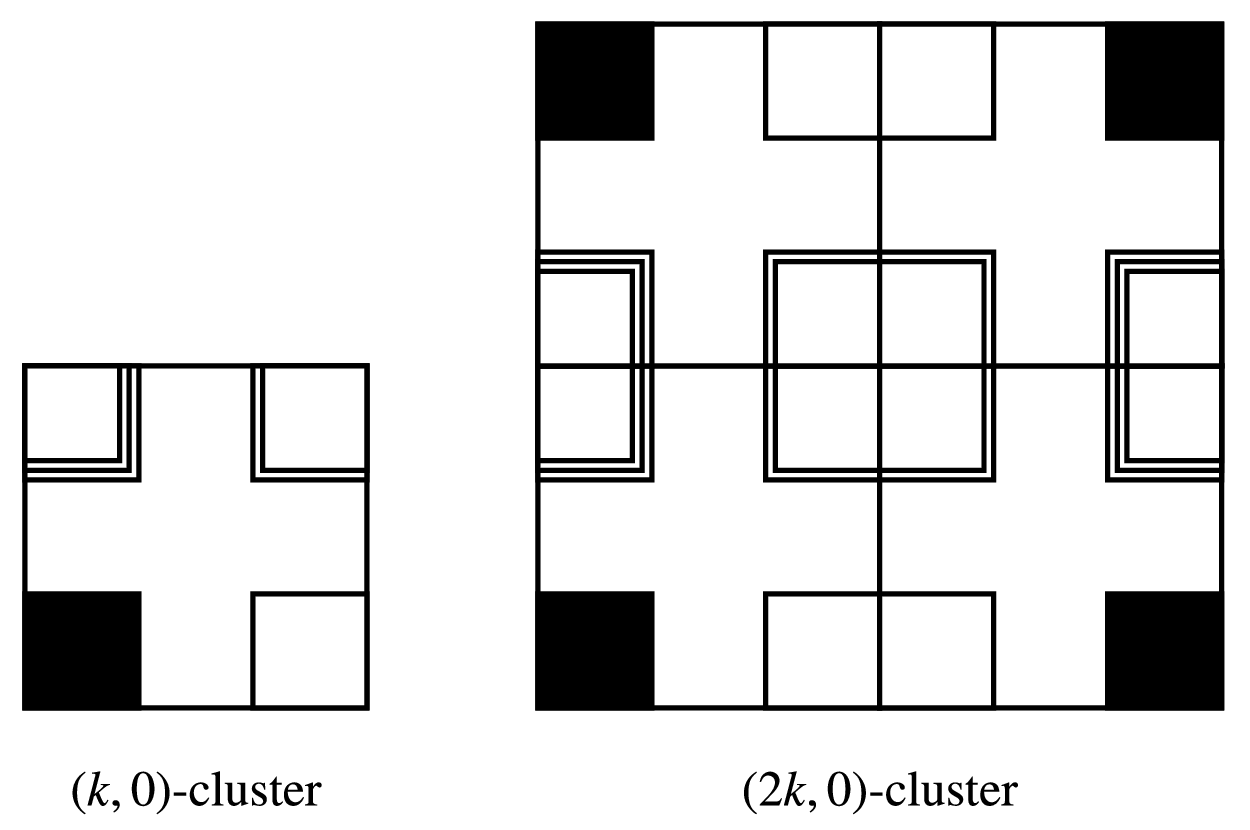}
	\caption{%
	$\square(2k)$ is tiled 
	by the foldings of $\square(k)$.
	}
	\label{Fig(patapata_4vlnt)}
\end{figure}
\par
For $s,t=0,1,\dots,2k-1$,
\begin{equation}
	v_{s,t}(a+bi)
	=
	e^{\pi i(sa+tb)/k},
	\quad
	\text{for 
	$a+bi
	\in
	S(2k)
	:=
	\left\{
	a+bi
	\setmid
	0\leq a,b\leq 2k-1
	\right\}$,}
\end{equation}
give all the eigenfunctions of the \emph{torus} 
which is obtained by adding edges between 
$a$ and $a+(2k-1)i$, and between 
$bi$ and $2k-1+bi$ for each $a,b=0,1,\dots,2k-1$. 
The corresponding eigenvalues are given as
\begin{equation}
	\lambda_{s,t}
	=
	\lambda_{s,t}(2k)
	=
	4-2\cos\frac{\pi s}{k}-2\cos\frac{\pi t}{k}.
	\label{Eq(4-vlnt_lambda_st)}
\end{equation}
Let
\[
	u_{s,t}
	:=
	\sum_{\sigma\in D_4}\sigma v_{s,t}
	\quad
	\text{and}%
	\quad
	w_{s,t}
	:=
	\sum_{\sigma\in D_4}\sgn(\sigma)\sigma v_{s,t}
\]
be the projections of $v_{s,t}$ to 
$\mathcal{U}_{2k}$ and $\mathcal{W}_{2k}$ respectively. 
Note here that, unlike the $3$-valent case, 
$u_{s,t}$ and $w_{s,t}$ always lie in the image of $\iota$. 
Similar computations as in the proof of 
Lemma~\ref{Lem(all_inv&alt_evs)} show that 
$u_{s,t}\equiv 0$ if and only if $u_{s,t}$ is either
\begin{itemize}
	\item 
	$u_{s,k}$ for $0\leq s\leq 2k-1$; or
	\item 
	$u_{k,t}$ for $0\leq t\leq 2k-1$,
\end{itemize}
and that 
$w_{s,t}\equiv 0$ if and only if 
$w_{s,t}$ is one of the following:
\begin{itemize}
	\item 
	$w_{s,k}$ for $0\leq s\leq 2k-1$;
	\item 
	$w_{k,t}$ for $0\leq t\leq 2k-1$;
	\item 
	$w_{s,s}$ for $0\leq s\leq 2k-1$;
	\item 
	$w_{s,2k-s}$ for $0\leq s\leq 2k-1$.
\end{itemize}
Moreover a simple computation shows that 
if both $s$ and $t$ are divisible by $2$, 
then $u_{s,t}|_{\square(k)}$ 
is also a $D_4$-invariant eigenfunction. 
\par
Since $f_{s,t}=u_{s,t}$ or $w_{s,t}$ satisfies
\[
	\begin{aligned}
		f_{s,t}
		={}&
		f_{t,s}
		=e^{-\pi is/k}f_{2k-s,t}
		=e^{-\pi is/k}f_{t,2k-s}
		\\
		={}&
		e^{-\pi it/k}f_{2k-t,s}
		=e^{-\pi it/k}f_{s,2k-t}
		=e^{-\pi i(s+t)/k}f_{2k-s,2k-t}
		=e^{-\pi i(s+t)/k}f_{2k-t,2k-s}
	\end{aligned}
\]
and therefore
\begin{equation}
	\left\{
	u_{s,t}
	\setmid
	t\leq s\leq k-1,\ 
	0\leq t\leq k-1
	\right\}
	\cup
	\left\{
	w_{s,t}
	\setmid
	t+1\leq s\leq k-1,\ 
	0\leq t\leq k-1
	\right\}
	\label{Eq(u_st&w_st)}
\end{equation}
gives a complete list of 
the eigenfunctions of $\square(k)$ 
because its total number is computed as 
$(k^2/2+k/2)+(k^2/2-k/2)=k^2$. 
\par
Theorem~\ref{Thm(o(k^2)_evs)} 
is now proved similarly as in the $3$-valent case 
only by noting that the function 
$S(2k)\ni (s,t)\mapsto \lambda_{s,t}\in [0,8]$ 
takes value near $0$ (resp.\ $8$) 
only near the four corners 
(resp.\ the center) of $S(2k)$.
\begin{proof}[Proofs of\/ 
{\upshape(\ref{Eq(lambda(GC(X))>6-e)})} 
in Theorem~{\upshape\ref{Thm(comp_with_X)}} 
and Theorem~{\upshape\ref{Thm(arbitrary_lambda)}}]
The same computation as above shows 
that the projection $u_{s,t}\in \mathcal{U}_k$ 
of $v_{s,t}$ defined on $S(k)$ 
vanishes if and only if 
$k$ is even and either
\begin{itemize}
	\item 
	$u_{s,k/2}$ for $0\leq s\leq k-1$; or
	\item 
	$u_{k/2,t}$ for $0\leq t\leq k-1$.
\end{itemize}
Therefore
\[
	\lambda_{s,s}(k)
	=
	4-4\cos\frac{2\pi s}{k}
	\quad
	\text{for $0\leq s\leq k-1$ and $s\neq k/2$}
\]
are all $D_4$-invariant eigenvalues 
for the $(k,0)$-cluster, 
and an arbitrary real number in $[0,8]$ 
is approximated by these values as $k$ tends to infinity, 
which proves Theorem~\ref{Thm(arbitrary_lambda)}. 
\par
(\ref{Eq(inv_lambda<lambda(GC(X)))}) is valid also 
for a $4$-valent graph, 
and the inequality (\ref{Eq(lambda(GC(X))>6-e)}) 
is obtained by choosing 
$\lambda=\lambda_{(k-2)/2,(k-2)/2}(k)=4+4\cos(2\pi/k)$ 
if $k$ is even and 
$\lambda=\lambda_{(k-1)/2,(k-1)/2}(k)=4+4\cos(\pi/k)$ 
if $k$ is odd.
\end{proof}
%
\section{On the eigenvalues $2$ and $4$ 
for Goldberg-Coxeter constructions}
\label{Sec(ev_2_4)}
This section provides proofs of the 
theorems on multiplicities of eigenvalues $2$ and $4$ 
stated in Section~\ref{Sec(Intro)}. 
In the first two subsections, 
we shall prove Theorems~\ref{Thm(eigen24_3-vlnt)} 
and \ref{Thm(eigen4_4-vlnt)}. 
As is seen below, a reason for 
large multiplicities of eigenvalues $2$ and $4$ 
of $\GC_{2k,0}(X)$ is that the $(2k,0)$-clusters also 
have large multiplicities of eigenvalues $2$ and $4$. 
On the other hand, 
it is considered that the structure of an initial graph $X$ 
would affect the eigenvalue distribution 
of its Goldberg-Coxeter constructions. 
A few remarkable examples shall be provided 
in Section~\ref{Sec(3m-gons)}, 
where a proof of Theorem~\ref{Thm(3m-gons)} 
is also included. 
\subsection{The case where $X$ is $3$-valent}
From what was mentioned 
in (1) of Remark~\ref{Rem(inv&alt)}, 
Theorem~\ref{Thm(eigen24_3-vlnt)} 
is an immediate consequence of 
the following lemma.
\begin{Lem}
\label{Lem(eigen24_3-vlnt_clstr)}
For $k\geq 1$ {\upshape(}resp.\ $k\geq 2${\upshape)}, 
the $3$-valent $(2k,0)$-cluster $\triangle(2k)$ 
has a $D_3$-invariant eigenvalue 
$4$ {\upshape(}resp.\ $2${\upshape)}, 
whose multiplicity is at least 
$\lceil k/2\rceil$ 
{\upshape(}resp.\ $\lfloor k/2\rfloor${\upshape)}. 
\end{Lem}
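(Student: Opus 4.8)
The plan is to pin down, exactly, the dimension of the $D_3$-invariant eigenspace of $\triangle(2k)$ belonging to the eigenvalue $4$ (resp.\ $2$), using the explicit list of $D_3$-invariant eigenfunctions built above. Recall from the construction around (\ref{Eq(u_st^pm&w_st^pm)}), with $k$ there replaced by $2k$, that the functions $u_{s,t}^{\pm}$ ($0\le s,t\le 2k-1$), obtained from the torus eigenfunctions $v_{s,t}^{\pm}$ on $T(2k)$ by averaging over $D_6$, span the space of $D_3$-invariant eigenfunctions of $\triangle(2k)$, and that $u_{s,t}^{\pm}$ has eigenvalue $\lambda_{s,t}^{\pm}(2k)$ as in (\ref{Eq(3-vlnt_lambda_st^pm)}). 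Since $\lambda_{s,t}^{+}(2k)\ge 3$ and $\lambda_{s,t}^{-}(2k)\le 3$, the eigenvalue $4$ can only occur among the $u_{s,t}^{+}$ and the eigenvalue $2$ only among the $u_{s,t}^{-}$; so it suffices, after discarding the identically vanishing ones and after imposing the identifications $u_{s,t}^{\pm}=u_{t,s}^{\pm}=u_{t-s,t}^{\pm}=\dots=u_{-s,-t}^{\pm}$, to count how many of them have eigenvalue $4$ (resp.\ $2$) and to verify their linear independence.

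First I would single out the relevant parameters. Setting $\lambda_{s,t}^{+}(2k)=4$ (equivalently $\lambda_{s,t}^{-}(2k)=2$) amounts to $\cos\frac{\pi s}{k}+\cos\frac{\pi t}{k}+\cos\frac{\pi(s-t)}{k}=-1$, and the trigonometric identity $1+\cos A+\cos B+\cos(A-B)=4\cos\frac{A}{2}\cos\frac{B}{2}\cos\frac{A-B}{2}$ rewrites this as $\cos\frac{\pi s}{2k}\cos\frac{\pi t}{2k}\cos\frac{\pi(s-t)}{2k}=0$, i.e.\ $s\equiv k$, $t\equiv k$ or $s-t\equiv k\pmod{2k}$. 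The parameter identifications carry all three of these onto the line $s=k$ (for instance $u_{s,\,s-k}^{\pm}=u_{-k,\,s-k}^{\pm}=u_{k,\,s-k}^{\pm}$), so it is enough to analyze the family $\{u_{k,t}^{\pm}\mid 0\le t\le 2k-1\}$, every member of which has eigenvalue $4$ (resp.\ $2$). On this line the identifications reduce to $u_{k,t}^{\pm}=u_{k,-t}^{\pm}=u_{k,k-t}^{\pm}$, so the distinct functions are indexed by the orbits of $t\in\mathbb{Z}/2k\mathbb{Z}$ under the Klein four-group generated by $t\mapsto-t$ and $t\mapsto k-t$; a short computation gives $(k+2)/2$ orbits when $k$ is even (the small ones being $\{0,k\}$ and $\{k/2,3k/2\}$) and $(k+1)/2$ orbits when $k$ is odd (the only small one being $\{0,k\}$).

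Next I would locate the identically vanishing members using Lemma~\ref{Lem(all_inv&alt_evs)} with $k$ replaced by $2k$. Each entry there is a ``degenerate'' pair, i.e.\ a pair $(s,t)$ with $t\equiv 2s$, $s\equiv 2t$ or $s+t\equiv 0\pmod{2k}$ lying in a prescribed range, and the union of these three relations is $D_6$-invariant; hence on the line $s=k$ the function $u_{k,t}^{\pm}$ can vanish only when $t\in\{0,k\}$ or $2t\equiv k$, so in particular the functions attached to the size-$4$ orbits never vanish. For the eigenvalue $4$: one checks that $u_{k,k/2}^{+}\equiv 0$ when $k$ is even (the pair $(k,k/2)=(2s,s)$ with $s=k/2<2k/3$ occurs in the list), while $u_{k,0}^{+}$ and $u_{k,k}^{+}$, although their parameters are degenerate, are \emph{not} in the list because the range restrictions there exclude $s=k$; thus exactly $\lceil k/2\rceil$ orbits survive in either parity. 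For the eigenvalue $2$: here $u_{k,k}^{-}$ \emph{is} in the list (the pair $(k,k)=(s,2k-s)$ with $s=k$ and $2k/3<k<4k/3$), hence $u_{k,k}^{-}$ and $u_{k,0}^{-}$ (same orbit) vanish, whereas $u_{k,k/2}^{-}$ (when $k$ is even) and the size-$4$ orbit functions survive, leaving $\lfloor k/2\rfloor$ orbits. Finally, functions $u_{k,t}^{\pm}$ attached to distinct parameter orbits are supported on disjoint sets of Fourier modes on the non-shifted sublattice of $T(2k)$, hence are linearly independent; this gives the asserted lower bounds $\lceil k/2\rceil$ and $\lfloor k/2\rfloor$ for the multiplicities.

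The step I expect to be the main obstacle is this last cross-referencing of the orbit count against the vanishing lists of Lemma~\ref{Lem(all_inv&alt_evs)}: one must handle the range restrictions in those lists and the two parities of $k$ with care so that the count comes out to be exactly $\lceil k/2\rceil$ (resp.\ $\lfloor k/2\rfloor$) in each of the four cases. The trigonometric reduction, the orbit counting and the linear independence are all routine.
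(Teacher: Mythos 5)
Your argument is correct, and it reaches the same counts as the paper, but by a noticeably different bookkeeping route. The paper's proof never touches the torus $T(2k)$: it views $\triangle(2k)$ through the folding injection $\iota\colon\mathbb{C}^{\triangle(2k)}\to\mathcal{U}_{6k}\oplus\mathcal{W}_{6k}$, locates the eigenvalue-$4$ (resp.\ $2$) locus on $T(6k)$ as $s=3k$, $t=3k$ or $s-t=\pm 3k$, intersects it with the representative set (\ref{Eq((pm,s,t)_for_u_s,t^pm)}) (with $k$ replaced by $2k$) to get exactly the parameters $(3k,3j)$ with $0\le j<k/2$ (resp.\ $0<j\le k/2$), uses Lemma~\ref{Lem(patapata)} (divisibility of $s,t$ by $3$) to see these are $D_3$-invariant on $\triangle(2k)$, and cites the dimension count from the proof of Theorem~\ref{Thm(o(k^2)_evs)} for linear independence. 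You instead work directly with the $u_{s,t}^{\pm}$ built from $T(2k)$, which the paper has already shown span the $D_3$-invariant eigenspace of $\triangle(2k)$; you then characterize the eigenvalue-$4$/$2$ locus by the product formula $\cos\frac{\pi s}{2k}\cos\frac{\pi t}{2k}\cos\frac{\pi(s-t)}{2k}=0$, reduce to the line $s=k$ via the $D_6$ parameter identifications, count Klein-four-group orbits in $t$, and strike out the vanishing members by applying Lemma~\ref{Lem(all_inv&alt_evs)} with $k$ replaced by $2k$ (correctly: $\{k/2,3k/2\}$ dies for the $+$ sign, $\{0,k\}$ dies for the $-$ sign), finishing with linear independence via disjoint Fourier supports. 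I checked your orbit counts and your four boundary cases against the vanishing lists and they come out right in both parities, giving $\lceil k/2\rceil$ and $\lfloor k/2\rfloor$. Your route is more self-contained for this lemma — it bypasses $\iota$, Lemma~\ref{Lem(patapata)} and the set (\ref{Eq((pm,s,t)_for_u_s,t^pm)}) entirely, and replaces the global dimension count by an elementary Fourier-support argument — at the cost of redoing the orbit and vanishing analysis by hand; the only point worth making explicit is that linear independence of the $D_6$-symmetrized functions on $T(2k)$ does pass to their restrictions to $\triangle(2k)$, which holds because a $D_6$-invariant function vanishing on the fundamental triangle vanishes on all of $T(2k)$.
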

\begin{proof}
For $0\leq s,t<6k$, 
as is easily proved from a direct computation 
using (\ref{Eq(3-vlnt_lambda_st^pm)}), 
$\lambda_{s,t}^{+}(6k)$ (resp.\ $\lambda_{s,t}^{-}(6k)$) 
takes the value $4$ (resp.\ $2$) if and only if 
$s$ and $t$ satisfy either 
$s=3k$ or $t=3k$ or $s-t=3k$ or $s-t=-3k$. 
Among (\ref{Eq((pm,s,t)_for_u_s,t^pm)}) 
with $k$ replaced with $2k$, 
$\lambda_{s,t}^{+}=4$ 
for, and only for, $s=3k$ and $t=3j$ ($0\leq j<k/2$), 
and $\lambda_{s,t}^{-}=2$ 
for, and only for, $s=3k$ and $t=3j$ ($0<j\leq k/2$). 
All of them are $D_3$-invariant 
by Lemma~\ref{Lem(patapata)}. 
The corresponding $u_{s,t}^{\pm}$'s 
are linearly independent 
from the consequence obtained in the proof of 
Theorem~\ref{Thm(o(k^2)_evs)}. 
\end{proof}
\begin{figure}[htbp]
	\centering
	\begin{tabular}{cc}
		\subfigure[with eigenvalue $4$]%
		{\includegraphics[width=.35\textwidth]%
		{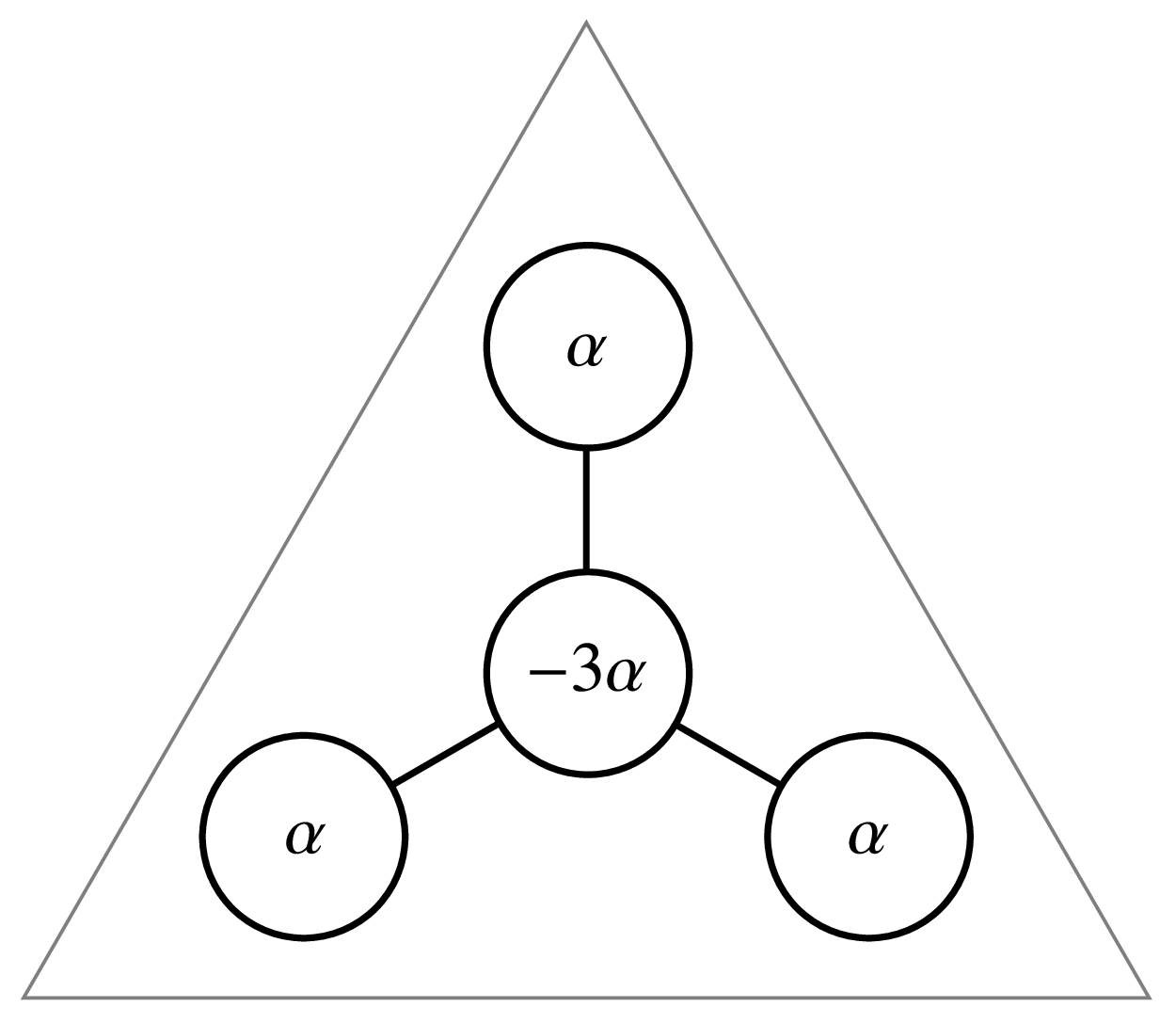}%
		\label{Fig(eigen4)}}
		&
		\subfigure[with eigenvalue $2$]%
		{\includegraphics[width=.35\textwidth]%
		{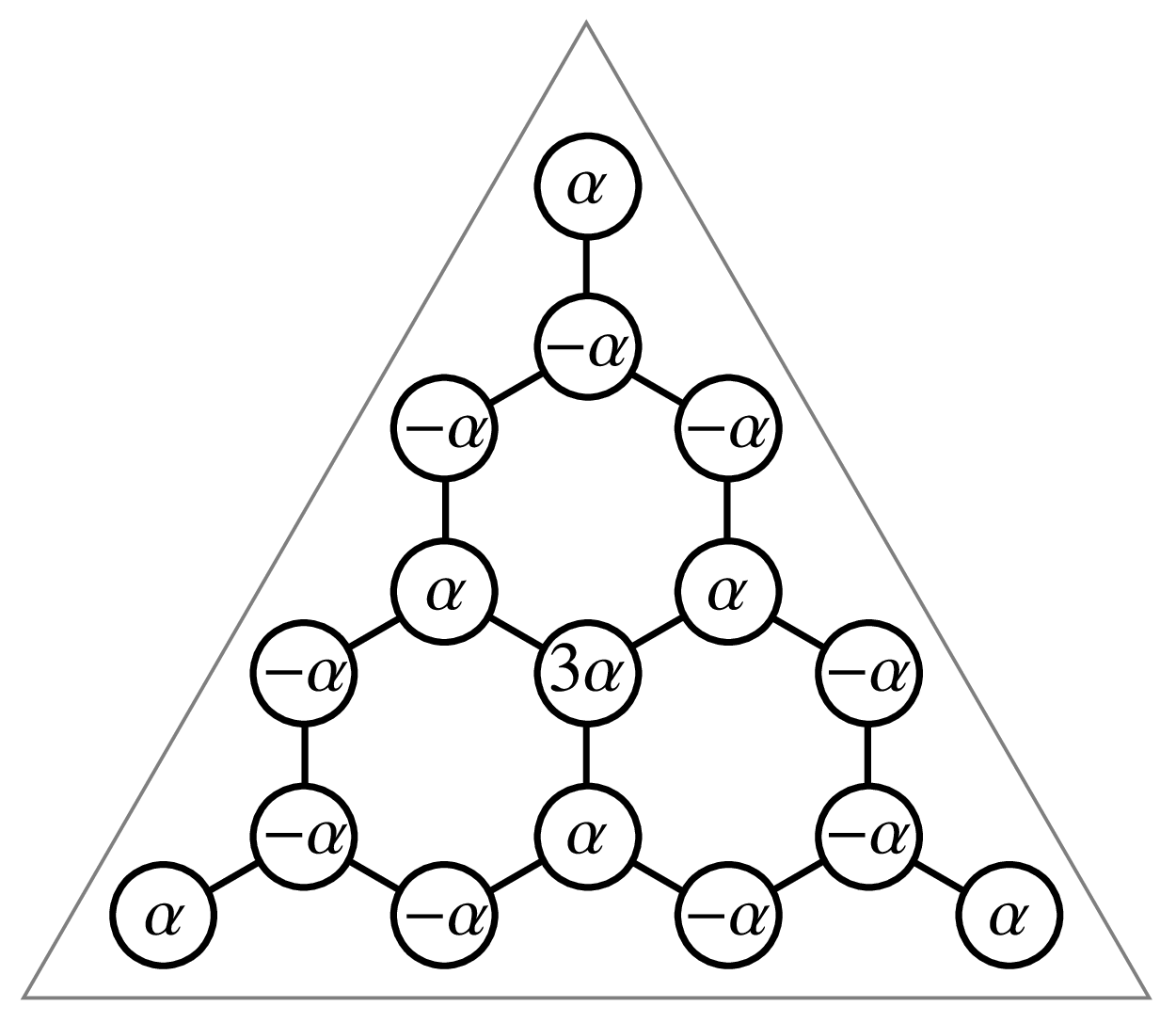}%
		\label{Fig(eigen2)}}
	\end{tabular}
	\caption{$D_3$-invariant eigenfunctions}
	\label{Fig(cluster)}
\end{figure}
\subsection{The case where $X$ is $4$-valent}
Similarly as in the $3$-valent case, 
Theorem~\ref{Thm(eigen4_4-vlnt)} 
is a consequence of the following.
\begin{Lem}
\label{Lem(eigen4_4-vlnt_clstr)}
For $k\geq 2$, 
the $4$-valent $(2k,0)$-cluster $\square(2k)$ 
has a $D_4$-invariant eigenvalue $4$, 
whose multiplicity is at least $\lceil (k-1)/2\rceil$. 
\end{Lem}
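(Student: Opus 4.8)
The plan is to transcribe, for the $4$-valent case, the argument used for Lemma~\ref{Lem(eigen24_3-vlnt_clstr)}: it suffices to produce $\lceil(k-1)/2\rceil$ linearly independent $D_4$-invariant eigenfunctions of $\square(2k)$ with eigenvalue $4$, and these are to be found among the functions $u_{s,t}$ that make up the complete list of eigenfunctions of $\square(2k)$, i.e.\ (\ref{Eq(u_st&w_st)}) with $k$ replaced by $2k$, whose $u$-part runs over $0\leq t\leq s\leq 2k-1$.

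First I would determine which indices give eigenvalue $4$. Here the relevant eigenvalue of $u_{s,t}$ is $\lambda_{s,t}(4k)=4-2\cos\frac{\pi s}{2k}-2\cos\frac{\pi t}{2k}$, that is, (\ref{Eq(4-vlnt_lambda_st)}) with $k$ replaced by $2k$, and $\lambda_{s,t}(4k)=4$ is equivalent to $\cos\frac{\pi s}{2k}+\cos\frac{\pi t}{2k}=0$. By the sum-to-product identity $\cos\frac{\pi s}{2k}+\cos\frac{\pi t}{2k}=2\cos\frac{\pi(s+t)}{4k}\cos\frac{\pi(s-t)}{4k}$, this holds exactly when $s+t\equiv 2k\pmod{4k}$ or $s-t\equiv 2k\pmod{4k}$. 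Within the range $0\leq t\leq s\leq 2k-1$ one has $s-t\in[0,2k-1]$ and $s+t\in[0,4k-2]$, so only $s+t=2k$ can occur; the eigenvalue-$4$ members of the $u$-part are therefore $u_{2k-1,1},u_{2k-2,2},\dots,u_{k,k}$.

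Next I would keep only those for which $u_{s,t}|_{\square(2k)}$ is not merely symmetric across the diagonal but is fixed by the whole $D_4$ acting on $\square(2k)$. By the fact, with $k$ replaced by $2k$, that $u_{s,t}|_{\square(k)}$ is $D_4$-invariant whenever $s$ and $t$ are both even, it is enough to retain the pairs with $s+t=2k$ and $s,t$ both even, namely $(2k-2,2),(2k-4,4),\dots,(2k-2\lfloor k/2\rfloor,2\lfloor k/2\rfloor)$; these number $\lfloor k/2\rfloor=\lceil(k-1)/2\rceil$, and the list is nonempty precisely because $k\geq 2$. For each such pair one checks $\lambda_{s,t}(4k)=4$ directly (writing $s=2k-t$, so $\cos\frac{\pi s}{2k}=-\cos\frac{\pi t}{2k}$), checks $u_{s,t}\not\equiv 0$ (since $s,t<2k$, the pair avoids the vanishing families $u_{s,2k}$ and $u_{2k,t}$), and notes that the corresponding functions are linearly independent, being distinct members of the basis of $\mathbb{C}^{\square(2k)}$ constructed in the proof of Theorem~\ref{Thm(o(k^2)_evs)} for the $4$-valent case. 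This yields the required $\lceil(k-1)/2\rceil$ independent $D_4$-invariant eigenfunctions with eigenvalue $4$; Theorem~\ref{Thm(eigen4_4-vlnt)} then follows through the $4$-valent analogue of (1) of Remark~\ref{Rem(inv&alt)}, together with the disjointness of the clusters in $\GC_{2k,0}(X)$.

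The point needing care is the third step: a function built from the torus $S(4k)$ that is symmetric across the diagonal of $\square(2k)$ need not be fixed by all of $D_4$ on $\square(2k)$, so one cannot simply count all eigenvalue-$4$ members of the $u$-part; the parity criterion extracts a genuinely $D_4$-invariant subfamily, and one must confirm that this merely sufficient condition still leaves exactly $\lfloor k/2\rfloor$ admissible indices, which is the claimed bound. A secondary check is the routine bookkeeping at the boundary of the fundamental parameter region so that no index is double-counted or lost; everything else is a direct transcription of the $3$-valent argument.
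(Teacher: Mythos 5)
Your proof is correct and follows essentially the same route as the paper: list the eigenfunctions $u_{s,t}$ of $\square(2k)$ built from $S(4k)$, isolate those with $\lambda_{s,t}(4k)=4$ (only $s+t=2k$ survives in the fundamental range), and then keep the genuinely $D_4$-invariant ones via the parity criterion. You are in fact more explicit than the paper's one-line argument, which states the count $\lceil (k-1)/2\rceil$ without spelling out the restriction $s\equiv t\equiv 0\pmod 2$ needed to cut the $k$ solutions of $s+t=2k$ down to $\lfloor k/2\rfloor=\lceil (k-1)/2\rceil$; your third step supplies exactly that missing justification.
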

\begin{proof}
It is easily confirmed that $\lambda_{s,t}(4k)=4$ 
if and only if $s$ and $t$ satisfy 
either $s+t=2k$ or $s+t=6k$ or $s-t=2k$ or $s-t=-2k$. 
The number of $u_{s,t}$'s 
out of (\ref{Eq(u_st&w_st)}) satisfying 
$\lambda_{s,t}(4k)=4$ is therefore computed as 
$\lceil (k-1)/2\rceil$. 
\end{proof}
\subsection{Dependence on the structure of $X$ 
(only for $3$-valent case)}
\label{Sec(3m-gons)}
This subsection 
provides the proofs of Theorem~\ref{Thm(3m-gons)}, 
which describes relations between the conditions 
(F), (CN), (N) and (C) 
in Section~\ref{Sec(conditions_on_3-vlnt_graphs)}, 
and eigenvalues $2$ and $4$ of some $\GC_{k,0}(X)$'s. 
\par
\begin{proof}[Proof of\/ {\upshape(1)} 
of Theorem~{\upshape{\upshape\ref{Thm(3m-gons)}}}]
Let us take a vertex numbering 
$V(\GC_{2,0}(X))\rightarrow \{1,2,3\}$ 
satisfying (N), 
whose existence is guaranteed by 
Proposition~\ref{Prop(coherent_numbering)}. 
Let $\alpha_0,\alpha_1,\alpha_2$ and $\alpha_3$ be 
complex numbers satisfying 
$\alpha_0+\alpha_1+\alpha_2+\alpha_3=0$. 
Then it is easy to see that the function 
$v\colon V(\GC_{2,0}(X))\rightarrow \mathbb{C}$ 
which maps a vertex with number $i$ to $\alpha_i$ 
is an eigenfunction of $\Delta_{\GC_{2,0}(X)}$ 
with eigenvalue $4$. 
By above reasons, we can find two more eigenfunctions 
which are linearly independent with $u=u_{\alpha}$ 
which was obtained in 
Theorem~\ref{Thm(eigen24_3-vlnt)} (1); 
in fact, set 
$(\alpha_0,\alpha_1,\alpha_2,\alpha_3)
=(0,1,-1,0),(0,1,1,-2)$ for example. 
\end{proof}
Let us next consider the condition (C). 
Our assertions are summarized as follows. 
\begin{Prop}\label{Prop(C_and_mult)}
Let $X$ be a $3$-valent plane graph. 
\begin{enumerate}
	\item 
	If $X$ has a vertex coloring satisfying 
	{\upshape(C-i)} and {\upshape(C-ii)}, 
	then for any $s\in \mathbb{N}$, 
	$\GC_{2s-1,0}(X)$ has eigenvalue $4$. 
	\item 
	If $X$ has a vertex coloring satisfying 
	{\upshape(C-i)}, {\upshape(C-ii)} and {\upshape(C-iii)}, 
	then for any $k\in \mathbb{N}$, 
	both $\GC_{k,0}(X)$ and $\GC_{k,k}(X)$ 
	have eigenvalue $4$ {\upshape(}resp.\ $2${\upshape)}, 
	whose multiplicity is at least $\lceil k/2\rceil$ 
	{\upshape(}resp.\ $\lfloor k/2\rfloor${\upshape)}. 
\end{enumerate}
\end{Prop}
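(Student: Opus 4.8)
The plan is to exhibit the required eigenfunctions by hand, cluster by cluster, letting the colouring of $X$ dictate the local shape of the eigenfunction, and to dispose of the composite cases by the multiplicativity of the Goldberg--Coxeter construction. The elementary fact at the core of part (1) is that \emph{any} $3$-valent graph $G$ carrying a vertex colouring with (C-i) and (C-ii) has eigenvalue $4$: the function $v$ equal to $-3$ on the black vertices and $1$ on the white vertices satisfies $\Delta_G v=4v$, since at a black vertex $3\cdot(-3)-3\cdot 1=-12=4\cdot(-3)$ and at a white vertex $3\cdot 1-((-3)+2\cdot 1)=4=4\cdot 1$. This is the case $s=1$ of (1), because $\GC_{1,0}(X)=X$. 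For general $s$ I would reduce to this fact by showing that $\GC_{2s-1,0}(X)$ again admits a colouring with (C-i),(C-ii). Such a colouring is built cluster by cluster: over a black vertex of $X$ all three edges of the $(2s-1,0)$-cluster are interchangeable, while over a white vertex the edge toward its unique black neighbour singles out one of them; one writes down, in triangular-lattice coordinates, a ``symmetric'' black/white pattern of small triangles for the clusters over black vertices and an ``asymmetric'' one for the clusters over white vertices, each satisfying (C-i),(C-ii) in the interior of a cluster and matching correctly across black--white and white--white edges of $X$. The oddness of $2s-1$ is exactly what makes these two patterns fit together around $X$ without any further hypothesis.

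For part (2) I would first reduce the statement. Since $X$ is planar, Proposition~\ref{Prop(properties_of_GC)}(\ref{Item(GC_zz'=GC_z(GC_z'))}) gives $\GC_{k,k}(X)=\GC_{k,0}(\GC_{1,1}(X))$, and $\GC_{1,1}(X)$ satisfies (C-i),(C-ii),(C-iii) by Example~\ref{Ex(F-CN-C)}(\ref{Item(GC_1,1_satisfies_C)}); hence both assertions follow once the $\GC_{k,0}$ statement is established for every $k$ and every $3$-valent plane graph satisfying (C-i),(C-ii),(C-iii). For that I would enrich the construction of part (1): under (C-iii) the pattern installed on a white cluster may be taken from a larger family of local eigenfunctions of eigenvalue $4$ (resp.\ $2$), parametrised so as to contain $\lceil k/2\rceil$ (resp.\ $\lfloor k/2\rfloor$) members, in parallel with the count for $\triangle(2k)$ obtained in Lemma~\ref{Lem(eigen24_3-vlnt_clstr)}. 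Condition (C-iii), the ``turning twice'' property, is precisely what permits a coherent choice from this family simultaneously at every cluster, and linear independence of the resulting global eigenfunctions is inherited from that of the underlying cluster data, exactly as in the proof of Theorem~\ref{Thm(o(k^2)_evs)}.

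The step I expect to be the main obstacle, in both parts, is this last global consistency: checking that the locally prescribed patterns --- and, in part (2), the members of the family chosen on the white clusters --- agree along \emph{every} closed walk of $X$. This should be handled by a homological argument in the spirit of the proof of Proposition~\ref{Prop(coherent_numbering)}: one expresses the gluing obstruction as an additive invariant over the faces of $X$ and shows it vanishes on each face, the role of (C-iii) (as opposed to the bare (C-i),(C-ii)) being precisely to kill the part of this obstruction that the parity of $2s-1$ alone cannot. A secondary technical point is to pin down the exact multiplicities $\lceil k/2\rceil$ and $\lfloor k/2\rfloor$, which requires matching the cluster-level bookkeeping with the counting already carried out for $\triangle(2k)$.
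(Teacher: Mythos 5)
Your base case for (1) ($s=1$, the function $-3$ on black and $1$ on white) and your reduction of the $\GC_{k,k}$ case to $\GC_{k,0}(\GC_{1,1}(X))$ via Proposition~\ref{Prop(properties_of_GC)}~(\ref{Item(GC_zz'=GC_z(GC_z'))}) and Examples~\ref{Ex(F-CN-C)}~(\ref{Item(GC_1,1_satisfies_C)}) both match the paper. But beyond that the proposal is a plan whose decisive steps are all deferred, and the central mechanism of the paper's proof is missing: conditions (C-i) and (C-ii) say exactly that the clusters of $\GC_{k,0}(X)$ are partitioned into quadruplets $\{\triangle(p),\triangle(q_1),\triangle(q_2),\triangle(q_3)\}$ (one black vertex with its three white neighbours), and each such quadruplet glues into a single $(2k,0)$-cluster. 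One then transplants $D_3$-invariant eigenfunctions of $\triangle(2k)$: for (1), the specific eigenfunction $u^{+}_{3(2s-1),0}$ with eigenvalue $4$ has \emph{constant boundary value}, so it satisfies the Neumann condition of Remark~\ref{Rem(inv&alt)}(1) across white--white edges unconditionally; for (2), condition (C-iii) is what makes \emph{every} $D_3$-invariant eigenfunction of $\triangle(2k)$ glue, and the multiplicity bound is then read off from Lemma~\ref{Lem(eigen24_3-vlnt_clstr)}. None of this appears in your write-up: in (2) you never say what the ``larger family'' is, nor why (C-iii) makes the choices coherent, and your proposed homological argument is not carried out.

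For (1) with $s\geq 2$ your route is genuinely different and strictly stronger than what is needed: you claim $\GC_{2s-1,0}(X)$ again carries a (C-i),(C-ii) colouring, i.e.\ a perfect code. This is an unproven combinatorial assertion, and the assertion that ``the oddness of $2s-1$ is exactly what makes these two patterns fit together without any further hypothesis'' is exactly the point in doubt: the natural period-$2$ perfect-code patterns on the hexagonal lattice are not constant along a cluster boundary, so matching them across the white--white edges of $X$ faces the same global-consistency obstruction that forces the paper to invoke (C-iii) in part (2). You have neither exhibited the local patterns nor verified the gluing, so as it stands part (1) for $s\geq 2$ and all of the quantitative content of part (2) are gaps.
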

\begin{proof}
(1) The function $u\colon V(X)\rightarrow \mathbb{C}$ 
which maps a black vertex to $-3$ 
and a white one to $1$ 
is an eigenfunction of $\Delta_X$ with eigenvalue $4$, 
which proves (1) for $s=1$. 
\par
For $s\geq 2$, a quadruplet 
$\{\triangle(p),\triangle(q_1),
\triangle(q_2),\triangle(q_3)\}$ 
of $(2s-1,0)$-clusters, 
where $p$ is black and 
$N_X(p)=\{q_1,q_2,q_3\}$ are all white, 
can be glued with each other to be identified with 
a $(4s-2,0)$-cluster. 
On the other hand, 
it follows from a direct computation that 
$u_{3(2s-1),0}^{+}\in \mathcal{U}_{3(4s-2)}$ 
of (\ref{Eq(u_st^pm&w_st^pm)}) 
gives a $D_3$-invariant eigenfunction 
on $\triangle(4s-2)$ with the eigenvalue $4$ 
\emph{with the constant boundary value} $4$.
Therefore $u$ defines an eigenfunction on $\GC_{2s-1,0}(X)$ 
with eigenvalue $4$, proving (1). 
\par
(2) In the argument above 
to prove the existence 
$u$ on $\triangle(4s-2)$, 
if (C-iii) is further satisfied, 
then \emph{any} $D_3$-invariant eigenfunction 
on $\triangle(4s-2)$ with eigenvalue $4$ 
(resp.\ eigenvalue $2$) gives an eigenfunction 
on $\GC_{2s-1,0}(X)$ 
with eigenvalue $4$ (resp.\ eigenvalue $2$). 
For exactly the same reason, 
any $D_3$-invariant eigenfunction on $\triangle(4s)$ 
with eigenvalue $4$ (resp.\ eigenvalue $2$) 
gives an eigenfunction $\GC_{2s,0}(X)$ 
with eigenvalue $4$ (resp.\ eigenvalue $2$). 
This and 
(\ref{Item(GC_1,1_satisfies_C)}) 
in Examples~\ref{Ex(F-CN-C)} 
now prove (2). 
\end{proof}
\begin{proof}[Proof of {\upshape(2)} 
of Theorem~{\upshape{\upshape\ref{Thm(3m-gons)}}}]
The assertion 
is an immediate consequence of 
Proposition~\ref{Prop(C_and_mult)} 
and 
Proposition~\ref{Prop(F_implies_C)}. 
\end{proof}
\subsubsection*{Acknowledgment} 
Authors are partially supported by 
JSPS KAKENHI Grant Number 
15K17546, 15H02055, 25400068, 18K03267, 26400067, 17H06465, 17H06466, and 19K03488. This work is also supported 
by JST CREST Grant Number JPMJCR17J4. 
\end{document}